\newtheorem{theorem}{Theorem}[section]
\newtheorem{proposition}[theorem]{Proposition}
\newtheorem{lemma}[theorem]{Lemma}
\newtheorem{definition}[theorem]{Definition}
\newtheorem{remark}[theorem]{Remark}
\def\cD{\mathcal{D}}
\def\cF{\mathcal{F}}
\def\cG{\mathcal{G}}
\def\cI{\mathcal{I}}
\def\cJ{\mathcal{J}}
\def\cH{\mathcal{H}}
\def\cR{\mathcal{R}}
\def\cX{\mathcal{X}}
\def\cY{\mathcal{Y}}
\def\cZ{\mathcal{Z}}
\def\bE{\mathbb{E}}
\def\bR{\mathbb{R}}
\def\bZ{\mathbb{Z}}
\def\e{\varepsilon}
\begin{document}

\title{Stratonovich solution for the wave equation} 

\author{Raluca M. Balan\footnote{University of Ottawa, Department of Mathematics and Statistics, 150 Louis Pasteur Private, Ottawa, Ontario, K1G 0P8, Canada. E-mail address: rbalan@uottawa.ca.}\footnote{Research supported by a grant from the Natural Sciences and Engineering Research Council of Canada.}}

\date{May 15, 2021}
\maketitle

\begin{abstract}
\noindent In this article, we construct a Stratonovich solution for the stochastic wave equation in spatial dimension $d \leq 2$, with time-independent noise and linear term $\sigma(u)=u$ multiplying the noise. The noise is spatially homogeneous and its spectral measure satisfies an integrability condition which is stronger than Dalang's condition. We give a probabilistic representation for this solution, similar to the Feynman-Kac-type formula given in \cite{DMT08} for the solution of the stochastic wave equation with spatially homogeneous Gaussian noise, that is white in time. We also give the chaos expansion of the Stratonovich solution and we compare it with the chaos expansion of the Skorohod solution from \cite{BCC}.
\end{abstract}

\noindent {\em MSC 2020:} Primary 60H15; Secondary 60H07

\vspace{1mm}

\noindent {\em Keywords:} stochastic wave equation, Stratonovich solution, Malliavin calculus

\tableofcontents

\section{Introduction}

In this article, we study the stochastic wave equation with time-independent noise:
\begin{align}
\label{wave} 
\begin{cases}
\dfrac{\partial^2 v}{\partial t^2} (t,x)=\Delta v(t,x)+v(t,x)\dot{W}(x) \quad \quad t>0,x \in \bR^d \quad (d \leq 2) \\
v(0,x) = 1, \quad \dfrac{\partial v}{\partial t}(0,x)=0 \quad \quad \quad \quad \quad \ x \in \bR^d
\end{cases}
\end{align}
The noise is given by a zero-mean Gaussian process $\{W(\varphi);\varphi \in \cD(\bR^d)\}$ defined on a complete probability space $(\Omega,\cF,P)$, with covariance
\[
\bE[W(\varphi)W(\psi)]=\int_{\bR^d} \int_{\bR^d}\varphi(x)\psi(y)\gamma(x-y)dxdy:=\langle \varphi,\psi\rangle_{\cH},
\]
where $\cD(\bR^d)$ is the space of infinitely differentiable functions on $\bR^d$ with compact support and $\gamma:\bR^d \to [0,\infty]$ is a non-negative-definite function. By the Bochner-Schwartz theorem, there exists a tempered measure $\bR^d$ such that $\gamma=\cF \mu$ in the sense of distributions, and hence
\[
\langle \varphi,\psi\rangle_{\cH}=
\int_{\bR^d}\cF \varphi(\xi)\overline{\cF \psi(\xi)}\mu(d\xi) \quad \mbox{for any} \quad \varphi,\psi \in \cD(\bR^d).
\]

Let $\cH$ be the completion of $\cD(\bR^d)$ with respect to $\langle \cdot, \cdot \rangle_{\cH}$. Then $W=\{W(\varphi);\varphi \in \cH\}$ is an isonormal Gaussian process, and we can use Malliavin calculus with respect to $W$ to analyze the solution of \eqref{wave}.
We recall briefly some basic facts from Malliavin calculus which are needed in this paper. We refer the reader to \cite{nualart06} for more details.
 For any $n\geq 1$, we denote by $\cH_n$ the $n$-th Wiener chaos space corresponding to $W$, i.e. the closed linear subspace of $L^2(\Omega)$ generated by $\{H_n(W(\varphi)); \varphi \in \cH, \|\varphi\|_{\cH}=1\}$, where $H_n(x)$ is the Hermite polynomial of order $n$. 
Any random variable $F \in L^2(\Omega)$, which is measurable with respect to $W$, has the Wiener chaos expansion:
\[
F=E(F)+\sum_{n\geq 1}I_n(f_n)
\]
where $I_n: \cH^{\otimes n} \to \cH_n$ is the multiple Wiener integral of order $n$. 
We let $\cH_0=\bR$ and $I_0:\bR \to \bR$ be the identity map. 
Moreover,
\[
\bE|F|^2=\big(\bE (F)\big)^2+ \sum_{n\geq 1}n! \| \widetilde{f}_n\|_{\cH^{\otimes n}}^2,
\]
where $\widetilde{f}_n$ is the symmetrization of $f_n$. Moreover, 
$\|\widetilde{f}\|_{\cH^{\otimes n}} \leq \|f\|_{\cH^{\otimes n}}$ for any $f \in \cH^{\otimes n}$.

\medskip
 
We impose the following assumption, which is needed for Lemmas \ref{energy-lemma} and \ref{lemmaF} below:

\medskip

{\bf Assumption A.}
(a) $\gamma$ is continuous, symmetric, and $\gamma(x)<\infty$ if and only if $x\not=0$; \\
(b) $\mu$ is absolutely continuous with respect to the Lebesgue measure.

\medskip

A basic example is the Riesz kernel $\gamma(x)=|x|^{-\alpha}$ with $\alpha \in (0,d)$, in which case $\mu(d\xi)=C_{d,\alpha}|\xi|^{-(d-\alpha)}d\xi$ and $C_{d,\alpha}>0$ is a constant depending on $d$ and $\alpha$.

\medskip

We are interested in the Stratonovich solution of \eqref{wave}, whose definition is given below.

\begin{definition}
{\rm
The random field $\{v(t,x);t\geq 0,x\in \bR^d\}$ is a (mild) {\bf Stratonovich solution} of equation \eqref{wave} if for any $t>0$ and $x \in \bR^d$, with probability 1,
\[
v(t,x)=1+\int_0^t \left(\int_{\bR^d}G(t-s,x-y)v(s,y)W^{\circ}(dy)\right) ds,
\]
where the $W^{\circ}(dy)$ denotes the Stratonovich integral.}
\end{definition}

We recall that the Stratonovich integral $\int_{\bR^d}\Phi(x)W^{\circ} (dx)$ of the random field $\{\Phi(x)\}_{x \in \bR^d}$ is defined as the following limit in probability, if it exists:
\[
\int_{\bR^d}\Phi(x)W^{\circ}(dx)=\lim_{\e \downarrow 0} \int_{\bR^d} \Phi(x)\dot{W}^{\e}(x)dx,
\]
where $\dot{W}^{\e}(x)=W(p_{\e}(x-\cdot))$ is a mollification of $W$ and $p_{\e}(x)=(2\pi \e)^{-d/2}e^{-|x^2|/(2\e)}$.

\medskip

We denote by $G$ the fundamental solution of the wave equation on $\bR_{+}\times \bR^d$:
\begin{align}
	G(t,x)=
	\begin{cases}
		\displaystyle \frac{1}{2}1_{\{|x|<t\}}                               & \text{if $d=1$},\\[1em]
		\displaystyle \frac{1}{2\pi} \frac{1}{\sqrt{t^2-|x|^2}}1_{\{|x|<t\}} & \text{if $d=2$},\\
	\end{cases}
\end{align}
Note that $G(t,\cdot)$ is integrable and $\int_{\bR^d}G(t,x)dx=t$.
We denote by $\cF \varphi(\xi)=\int_{\bR^d}e^{-i \xi \cdot x}\varphi(x)dx$
the Fourier transform of a function $\varphi \in L^1(\bR^d)$. Then
\[
\cF G(t,\cdot)(\xi)=\frac{\sin(t|\xi|)}{|\xi|} \quad \mbox{for all} \ \xi \in \bR^d.
\]

The following inequality will play an important role in the present article:
\begin{equation}
\label{FG-bound}
|\cF G(t,\cdot)(\xi)| \leq C_t \left( \frac{1}{1+|\xi|^2}\right)^{1/2},
\end{equation}
with $C_t=2\sqrt{2}(t \vee 1)$. To see this, use $\frac{\sin(t|\xi|)}{|\xi|} \leq 2(t \vee 1)\frac{1}{1+|\xi|}$ and $\left(\frac{1}{1+|\xi|}\right)^2 \leq \frac{2}{1+|\xi|^2}$.

\medskip

The first goal of this article is to prove the existence of a Stratonovich solution to equation \eqref{wave}.
This will be achieved under the following condition:
\begin{align}
\label{C-cond}
K_{\mu}:=\int_{\bR^d}\left( \frac{1}{1+|\xi|^2}\right)^{1/2} 
\mu(d\xi)<\infty,
\tag{C}
\end{align}
which, due to \eqref{FG-bound}, implies that
\begin{equation}
\label{sup-ineq}
\int_{\bR^d}|\cF G(t,\cdot)(\xi)|\mu(d\xi) \leq C_{t} K_{\mu}.
\end{equation}


The Stratonovich solution $v$ is different than the Skorohod solution $u$ of equation \eqref{wave}, which satisfies (by definition) the following integral equation:
\begin{equation}
\label{Skorohod}
u(t,x)=1+\int_0^t \int_{\bR^d} G(t-s,x-y)u(s,y)W(\delta y)ds,
\end{equation}
where $W(\delta y)$ denotes the Skorohod integral (see Section \ref{section-comparison} below).
In \cite{BCC}, it was shown that under {\em Dalang's condition}:
\begin{align}
\label{D-cond}
\int_{\bR^d}\frac{1}{1+|\xi|^2}\mu(d\xi)<\infty,
\tag{D}
\end{align}
the Skorohod solution exists and has the chaos expansion $u(t,x)=1+\sum_{n\geq 1}I_n\big( f_n(\cdot,x;t)\big)$, where
\begin{equation}
\label{def-fn}
f_n(x_1,\ldots,x_n,x;t)=\int_{T_n(t)} \prod_{i=1}^{n}G(t_{i+1}-t_i,x_{i+1}-x_i) dt_1 \ldots dt_n,
\end{equation}
and $T_n(t)=\{(t_1,\ldots,t_n) \in [0,t]^n;t_1<\ldots<t_n\}$. Here
$t_{n+1}=t$ and $x_{n+1}=x$. 

\medskip

If $\gamma$ satisfies the scaling property 
$\gamma(cx)=c^{-\alpha}\gamma(x)$ for all $c>0$ and $x\in \bR^d$,
for some $\alpha>0$, then $\mu(cA)=c^{\alpha}\mu(A)$ for any $c>0$ and $A \in {\cal B}(\bR^d)$, and (C) is equivalent to $\alpha\in (0,1)$
while (D) is equivalent to $\alpha \in (0,2)$.

\medskip
In the case of the heat equation:
\begin{equation}
\label{heat}
\begin{cases}
\dfrac{\partial u}{\partial t}(t,x)=\frac{1}{2}\Delta u(t,x)+u(t,x) \dot{W}(x), \quad t\geq 0,x\in \bR^d \ (d\geq 1) \\
\quad u(0,x)=1 \qquad \qquad \qquad \qquad \qquad \quad x\in \bR^d
\end{cases}
\end{equation}
it was proved in \cite{HHNT} that the Skorohod solution $u_h$ and the Stratonovich solution $v_h$ exist under Dalang's condition, and they admit the following Feynman-Kac representations:
\[
u_h(t,x)=\bE^{B}[e^{V(t,x)-\frac{1}{2}M(t)}] \quad \mbox{and} \quad v_{h}(t,x)=\bE^{B}[e^{V(t,x)}].
\]
Here $B=(B_t)_{t \geq 0}$ is a $d$-dimensional Brownian motion independent of $W$ and $B_t^x=B_t+x$. The functional $V(t,x)$, defined formally by
\[
V(t,x):=\int_0^t \int_{\bR^d}  \delta_0(B_r^x-y) W(dy)dr,
 \]
is the $L^2(\Omega)$-limit of $V^{\e}(t,x)= \int_0^t \int_{\bR^d}  p_{\e}(B_r^x-y) W(dy)dr$ as $\e \downarrow 0$,
 and the correction term
\[
M(t)=\int_0^t \int_0^t \gamma(B_s-B_r)dsdr
 \]
 is the conditional variance of $V(t,x)$ given $B$. If in addition,
\[
\int_{\bR^d}\left(\frac{1}{1+|\xi|^2} \right)^{1-\delta}\mu(d\xi)<\infty \quad \mbox{for some $\delta \in (0,1)$},
\]
then, by Proposition 5.28 of \cite{HHNT}, $W$ has a version with values in a weighted Besov space, and equation \eqref{heat} has a pathwise solution which coincides with the Stratonovich solution.
The construction of the Stratonovich solution of equation \eqref{heat} (given by Theorem 5.7 of \cite{HHNT}) relies heavily on the Feynman-Kac representation of this solution, and cannot be extended to the wave equation.

\medskip

In this article, we develop a method for constructing a Stratonovich solution based on chaos expansions, which seems to be new in the literature. We implement this method for the wave equation, but we believe that the method is so robust that can be applied to a large class of SPDEs with time-dependent noise and linear term $\sigma(u)=u$ multiplying the noise.
(We postpone the study of the wave equation with time-dependent noise for future work.)
We believe that this method can also be applied to show the existence of a Stratonovich solution for the heat equation 
under condition (D), using inequality:
\[
\int_0^T \cF G_h(t,\cdot)(\xi) dt \leq 4(T \vee 1) \frac{1}{1+|\xi|^2},
\]
instead of \eqref{FG-bound},
where $\cF G_{h}(t,\cdot)(\xi)=e^{-t|\xi|^2/2}$ is Fourier transform in the space variable of the fundamental solution $G_h(t,x)=p_t(x)$ of the heat equation on $\bR_{+}\times \bR^d$.

The idea is to use the classical method of Picard's iterations for solving the equation with mollified noise $W^{\e}$:
\begin{align}
\label{wave-m}
\begin{cases}
\dfrac{\partial^2 v^{\e}}{\partial t^2} (t,x)=\Delta v^{\e}(t,x)+v^{\e}(t,x)\dot{W}^{\e}(x), \quad \quad t>0,x \in \bR^d \quad  (d\leq 2)\\
v^{\e}(0,x) = 1, \quad \dfrac{\partial v^{\e}}{\partial t}(0,x)=0\end{cases}
\end{align}
and then apply the product formula from Malliavin calculus for writing the $n$-th Picard approximation $v_n^{\e}(t,x)$ of $v^{\e}(t,x)$ as a finite sum of multiple Wiener integrals. The delicate part is to let $\e \downarrow 0$. Finally, we let $n \to \infty$.
This method involves some non-trivial combinatorial arguments, which appeared for the first time in the earlier works \cite{hu-meyer1,hu-meyer2} of Hu and Meyer on multiple Stratonovich integrals with respect to Brownian motion.

\medskip

We are now ready to state the first main result of this paper.

\begin{theorem}
\label{main}
If Assumption A holds and $\mu$ satisfies condition (C), then there exists a process $v=\{v(t,x);t\geq 0,x \in \bR^d\}$ such that for any $T>0$ and $p\geq 2$,
\begin{equation}
\label{ve-v-conv}
\sup_{(t,x)\in [0,T] \times \bR^d}\bE|v^{\e}(t,x)-v(t,x)|^p \to 0 \quad \mbox{as $\e\downarrow 0$},
\end{equation}
where $v^{\e}$ is the solution of \eqref{wave-m}. The random variable $v(t,x)$ can be represented as
\begin{equation}
\label{series-v}
v(t,x)=1+\sum_{n\geq 1}H_n(t,x),
\end{equation}
where $H(t,x)=I_n^{\circ}\big(f_n(\cdot,x;t)\big)$ is the multiple Stratonovich integral of $f_n(\cdot,x;t)$, and is given by relation \eqref{def-Hn} below.
Moreover, $v$ is a Stratonovich solution of equation \eqref{wave}.
\end{theorem}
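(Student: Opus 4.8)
The plan is to carry out the four-step program sketched in the introduction: solve the mollified equation \eqref{wave-m} by Picard iteration, rewrite each iterate as a finite sum of multiple Wiener integrals using the Hu--Meyer combinatorics, pass to the limit $\e\downarrow0$ at fixed iteration level, and then pass to the limit in the iteration level uniformly in $\e$; finally, check that the fixed-point identity survives. Since $\dot W^{\e}(y)=W(p_{\e}(y-\cdot))$ has $C^{\infty}$ sample paths and $G(t,\cdot)$ is integrable, \eqref{wave-m} has a unique mild solution $v^{\e}$ (elementary, and recovered along the way), equal to the limit of the Picard scheme $v_{0}^{\e}\equiv1$, $v_{n+1}^{\e}(t,x)=1+\int_{0}^{t}\!\int_{\bR^{d}}G(t-s,x-y)v_{n}^{\e}(s,y)\dot W^{\e}(y)\,dy\,ds$. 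Iterating and recalling \eqref{def-fn}, $v_{n}^{\e}=\sum_{k=0}^{n}J_{k}^{\e}$ with $J_{k}^{\e}(t,x)=\int_{(\bR^{d})^{k}}f_{k}(y_{1},\ldots,y_{k},x;t)\prod_{i=1}^{k}\dot W^{\e}(y_{i})\,dy_{1}\cdots dy_{k}$. Writing $\dot W^{\e}(y_{i})=I_{1}(p_{\e}(y_{i}-\cdot))$ and applying repeatedly the multiplication formula for multiple Wiener integrals (the combinatorial identity of \cite{hu-meyer1,hu-meyer2}), one expands $\prod_{i=1}^{k}I_{1}(p_{\e}(y_{i}-\cdot))$ as a sum over partitions of $\{1,\ldots,k\}$ into pairs and singletons, each pair $\{a,b\}$ contributing the scalar $\gamma_{\e}(y_{a}-y_{b})$ --- where $\gamma_{\e}$ is the covariance of $\dot W^{\e}$, with spectral measure $\mu_{\e}$ obeying $\mu_{\e}\le\mu$ and $\mu_{\e}\uparrow\mu$ as $\e\downarrow0$ --- and the singletons a Wiener integral of a tensor of $p_{\e}$'s. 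Plugging this into $J_{k}^{\e}$, performing the Lebesgue integrations in $y$, and collecting by chaos order gives
\[
v_{n}^{\e}(t,x)=\sum_{m=0}^{n}I_{m}\big(g_{n,m}^{\e}(\cdot,x;t)\big),
\]
where $g_{n,m}^{\e}$ is an explicit linear combination --- with $\e$-independent combinatorial coefficients --- of ``mollified partial traces'' of the $f_{k}$, $m\le k\le n$, $k-m$ even: each term contracts $(k-m)/2$ disjoint pairs of arguments of $f_{k}$ against $\gamma_{\e}$ and convolves each of the $m$ surviving arguments with $p_{\e}$.

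Next, at fixed $n$, I would let $\e\downarrow0$. This is where Assumption A and condition (C) enter: via \eqref{FG-bound} and the Fourier representation of $\|\cdot\|_{\cH^{\otimes m}}$, Lemmas \ref{energy-lemma} and \ref{lemmaF} show that each \emph{untruncated} partial trace of $f_{k}$ (obtained by replacing $\gamma_{\e}$ by $\gamma$ and dropping the mollifications) is a well-defined element of $\cH^{\otimes m}$ of finite norm, and that $g_{n,m}^{\e}\to g_{n,m}$ in $\cH^{\otimes m}$ as $\e\downarrow0$ by dominated convergence on the spectral side (the relevant densities are dominated by those of $\mu$ and converge pointwise). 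Hence $v_{n}^{\e}(t,x)\to v_{n}(t,x):=\sum_{m=0}^{n}I_{m}(g_{n,m}(\cdot,x;t))$ in $L^{2}(\Omega)$, and in every $L^{p}(\Omega)$ by hypercontractivity (the variables lie in the fixed space $\cH_{0}\oplus\cdots\oplus\cH_{n}$), uniformly over $(t,x)\in[0,T]\times\bR^{d}$ since the $\cH^{\otimes m}$-norms of $f_{k}(\cdot,x;t)$ and of its traces depend only on $t\le T$, by translation invariance of $\gamma$. Then I would let $n\to\infty$: the energy estimate of Lemma \ref{energy-lemma} --- the factorial gain from integrating over the simplices $T_{k}(t)$ in \eqref{def-fn} beating the $k!$ partitions --- produces a summable sequence $(a_{n})$ with $\sup_{(t,x)\in[0,T]\times\bR^{d}}\|J_{n}^{\e}(t,x)\|_{L^{p}(\Omega)}\le a_{n}$ uniformly in $\e$, and the same for the increments of the limiting objects $v_{n}$. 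Thus $(v_{n}^{\e})_{n}$ and $(v_{n})_{n}$ are Cauchy in $L^{p}(\Omega)$, uniformly in $\e$ and in $(t,x)$; their limits are $v^{\e}$ and a field $v$, and collecting the order-$m$ contributions as $n\to\infty$ yields kernels $g_{m}:=\lim_{n}g_{n,m}$ with $v(t,x)=\sum_{m\ge0}I_{m}(g_{m}(\cdot,x;t))=1+\sum_{n\ge1}H_{n}(t,x)$, where $H_{n}(t,x)=I_{n}^{\circ}(f_{n}(\cdot,x;t))$ is defined by \eqref{def-Hn}; this is \eqref{series-v}.

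The convergence \eqref{ve-v-conv} now follows from $\|v^{\e}-v\|_{p}\le\|v^{\e}-v_{n}^{\e}\|_{p}+\|v_{n}^{\e}-v_{n}\|_{p}+\|v_{n}-v\|_{p}$: the first and third terms are at most $\sum_{k>n}a_{k}$, small uniformly in $\e$ and $(t,x)$; fixing such an $n$, the middle term tends to $0$ as $\e\downarrow0$. Finally, that $v$ is a Stratonovich solution amounts to showing that, for fixed $(t,x)$, $\int_{0}^{t}\!\int_{\bR^{d}}G(t-s,x-y)v(s,y)\dot W^{\e'}(y)\,dy\,ds$ converges in $L^{2}(\Omega)$ to $v(t,x)-1=\sum_{n\ge1}H_{n}(t,x)$ as $\e'\downarrow0$. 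For this one inserts the expansion \eqref{series-v}, applies the product formula to $I_{m}(g_{m}(\cdot,y;s))\cdot I_{1}(p_{\e'}(y-\cdot))$, and integrates against $G(t-s,x-y)$; the resulting series of multiple Wiener integrals is controlled, uniformly in $\e'$, by the estimates of the previous paragraph applied to integrands of the form $G\cdot(\text{chaos kernel})\cdot p_{\e'}$ --- the decay of $\cF G$ in \eqref{FG-bound} compensating the loss of mollification --- and the limiting kernels are checked to reproduce exactly the $g_{m}$'s and the constant $1$, i.e.\ the fixed-point recursion holds at the level of chaos kernels. (Equivalently, one compares directly with the mollified identity $v^{\e'}(t,x)-1=\int_{0}^{t}\!\int_{\bR^{d}}G(t-s,x-y)v^{\e'}(s,y)\dot W^{\e'}(y)\,dy\,ds$, using \eqref{ve-v-conv} for the main term.) This gives the existence of the Stratonovich integral and the equation.

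The main obstacle is the $\e\downarrow0$ step, and within it the assertion that the partial traces of the $f_{k}$'s define finite elements of $\cH^{\otimes m}$: these trace terms are exactly what separates the Stratonovich solution from the Skorohod solution, and they diverge under Dalang's condition (D) alone --- it is precisely to tame them that the stronger condition (C) is needed, entering through the bound \eqref{FG-bound} on $\cF G$. A secondary difficulty, absorbed by the simplex factorial gain, is that the number of partitions in the Hu--Meyer expansion grows like $k!$, so the decay of the trace norms must outpace this uniformly in $\e$; passing from $L^{2}$ to $L^{p}$ moments throughout is handled by hypercontractivity on each fixed truncation level.
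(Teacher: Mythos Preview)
Your proposal is correct and follows essentially the same route as the paper: it is exactly the four-step program summarized by the diagram at the start of Section~\ref{section-construction} (Picard iterates $\to$ Hu--Meyer expansion via the product formula of Theorem~\ref{product-n} $\to$ $\e\downarrow0$ at fixed level by dominated convergence on the spectral side $\to$ $n\to\infty$ uniformly in $\e$ $\to$ verification of the Stratonovich identity via the mollified equation). The only difference is organizational: you collect terms by Wiener-chaos order $m$, writing $v_{n}^{\e}=\sum_{m}I_{m}(g_{n,m}^{\e})$, whereas the paper keeps the ``Stratonovich-order'' decomposition $v_{n}^{\e}=1+\sum_{k\le n}H_{k}^{\e}$ and treats each $H_{k}^{\e}$ separately (Theorems~\ref{th-conv-e}--\ref{th-n-conv}); the two viewpoints are reconciled in Section~\ref{section-comparison}. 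One small correction: the summability in $n$ is not Lemma~\ref{energy-lemma} (which is a Parseval identity) but the pointwise bound \eqref{DCT-justif}, whose proof is the most delicate part of the argument --- it requires tracking the positions of the paired indices $\ell_{i},m_{i}$ and invoking Lemma~4.1 of \cite{balan-song} to get a $1/(n!)^{1/4}$ decay that beats the $\binom{n}{2k}\frac{(2k)!}{2^{k}k!}$ pair-partitions; your phrase ``factorial gain from the simplex'' points in the right direction but understates the work involved.
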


The series \eqref{series-v} can be used to derive the Wiener chaos expansion of $v(t,x)$, but the explicit expression of the projection on each Wiener chaos space is quite involved, since $H_n(t,x)$ is a sum of multiple Wiener integrals of order $n-2k$, for $k=0,1,\ldots,\lfloor n/2 \rfloor$. Note that the term of this sum which corresponds to $k=0$ is the $n$-th chaos term appearing in the Wiener chaos expansion of the Skorohod solution $u(t,x)$ (see Section \ref{section-comparison} below).

\bigskip



The second goal of this article is to give a Feynman-Kac-type formula for the Stratonovich solution of equation \eqref{wave}, similar to the one given by Dalang, Mueller and Tribe in \cite{DMT08} for It\^o-Skorohod solution of the wave equation with spatially-homogeneous Gaussian noise that is white in time. Unlike the Feynman-Kac formula for the solution of the heat equation (which uses the paths of Brownian motion), this formula is based on a process whose paths are obtained by linear interpolation between the jump times of a Poisson process.

More precisely, let $N=(N_t)_{t\geq 0}$ be a Poisson process of rate $1$, with jump times $(\tau_i)_{i\geq 1}$. Denote $\tau_0=0$. Let $(U_i)_{i\geq 1}$ be i.i.d. random variables with values in $\bR^d$, with density $G(1,\cdot)$. Note that if  $d=1$, $U_1$ has a uniform distribution on $(-1,1)$. Assume that $N$, $(U_i)_{i\geq 1}$ and $W$ are independent.

Based on $N$ and $(U_i)_{i\geq 1}$, we define the linearly interpolated process $(X_t)_{t\geq 0}$ as follows:
\[
X_t=X_{\tau_{i}}+(t-\tau_i)U_{i+1} \quad \mbox{for $\tau_i<t \leq \tau_{i+1}$, $i\geq 0$,}
\]
with $X_0=0$. Let $X_t^x=x+X_t$.

\medskip

The next theorem is the second main result of this paper, which gives the Feynman-Kac-type representation for $v$.

\begin{theorem}
\label{FK-th}
If Assumption A holds, $\mu$ satisfies condition (C), and $v$ is the Stratonovich solution of equation \eqref{wave} given by Theorem \ref{main}, then for any $t>0,x \in \bR^d$, almost surely,
\begin{align}
\label{FK}
v(t,x)&=e^t \bE^{N,X} \Big[ \prod_{i=1}^{N_t} (\tau_i-\tau_{i-1}) \prod_{i=1}^{N_t} \dot{W}(X_{\tau_i}^x)\Big] \\
\nonumber
&:=e^{t} \sum_{n\geq 0}
\bE^{N,X} \Big[1_{\{N_t=n\}} \prod_{i=1}^{N_t} (\tau_i-\tau_{i-1}) \prod_{i=1}^{N_t} \dot{W}(X_{\tau_i}^x)\Big],
\end{align}
where the terms of the series are defined by relation \eqref{def-FK-Hn} below if $n\geq 1$, and we use the convention $\prod_{i=1}^{0}=1$ if $n=0$ (so that the first term of the series is $e^t\bE^{N,X}[1_{\{N_t=0\}}]=1$). 
\end{theorem}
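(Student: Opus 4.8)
The strategy is to match the series in \eqref{FK} term by term with the Stratonovich chaos expansion \eqref{series-v} furnished by Theorem \ref{main}. Write $\dot W^\e$ for the mollified noise and, for $n\ge1$, let
\[
H_n^\e(t,x)=\int_{\bR^{nd}}f_n(y_1,\dots,y_n,x;t)\,\dot W^\e(y_1)\cdots\dot W^\e(y_n)\,dy_1\cdots dy_n .
\]
The core of the proof is the identity, valid for every $n\ge1$, $\e>0$, $t>0$, $x\in\bR^d$,
\begin{equation}
\label{plan-key}
e^{t}\,\bE^{N,X}\Big[1_{\{N_t=n\}}\prod_{i=1}^{n}(\tau_i-\tau_{i-1})\prod_{i=1}^{n}\dot W^\e(X^x_{\tau_i})\Big]=H_n^\e(t,x).
\end{equation}
Granting \eqref{plan-key}, I let $\e\downarrow0$: the right side converges in $L^2(\Omega)$ to $I_n^\circ(f_n(\cdot,x;t))=H_n(t,x)$ (this is exactly how the multiple Stratonovich integral of relation \eqref{def-Hn} is obtained, and the convergence is part of what is proved for Theorem \ref{main}), so the $n$-th term of \eqref{FK}, which \eqref{def-FK-Hn} defines precisely as this $\e\downarrow0$ limit, equals $e^{-t}H_n(t,x)$. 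Since the $n=0$ term of \eqref{FK} is $e^{t}P(N_t=0)=1$, summing over $n\ge0$ and invoking the $L^2(\Omega)$-convergent expansion $v(t,x)=1+\sum_{n\ge1}H_n(t,x)$ from Theorem \ref{main} yields \eqref{FK}. (Equivalently, summing \eqref{plan-key} over $n$ first gives the \emph{exact} Feynman--Kac formula $v^\e(t,x)=e^{t}\bE^{N,X}[\prod_{i=1}^{N_t}(\tau_i-\tau_{i-1})\prod_{i=1}^{N_t}\dot W^\e(X^x_{\tau_i})]$ for the mollified equation \eqref{wave-m} --- just the Duhamel/Picard expansion $v^\e=\sum_n H_n^\e$ reorganised along the interpolation process --- and then \eqref{FK} follows from \eqref{ve-v-conv}.)

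To prove \eqref{plan-key} I condition on the Poisson process $N$. On $\{N_t=n\}$ the jump times $(\tau_1,\dots,\tau_n)$ have ``density'' $e^{-t}\mathbf 1_{T_n(t)}$ with respect to Lebesgue measure, so, writing $\sigma_i:=\tau_i-\tau_{i-1}$ (with $\tau_0=0$), the left side of \eqref{plan-key} equals $\int_{T_n(t)}\big(\prod_{i=1}^{n}\sigma_i\big)\,\bE^{U}\big[\prod_{i=1}^n\dot W^\e(X^x_{\tau_i})\big]\,d\tau$. Next I use the scaling identity $G(\sigma,y)=\sigma^{1-d}G(1,y/\sigma)$, immediate from the explicit formulas for $G$ (and consistent with $\int_{\bR^d}\sigma^{-1}G(\sigma,y)\,dy=1$): since $U_i$ has density $G(1,\cdot)$, the increment $X^x_{\tau_i}-X^x_{\tau_{i-1}}=\sigma_i U_i$ has density $\sigma_i^{-d}G(1,\cdot/\sigma_i)=\sigma_i^{-1}G(\sigma_i,\cdot)$, and these increments are independent. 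Hence, with the convention $y_0:=x$,
\[
\Big(\prod_{i=1}^{n}\sigma_i\Big)\bE^{U}\Big[\prod_{i=1}^n\dot W^\e(X^x_{\tau_i})\Big]=\int_{\bR^{nd}}\prod_{i=1}^n\dot W^\e(y_i)\prod_{i=1}^{n}G(\sigma_i,y_i-y_{i-1})\,dy_1\cdots dy_n,
\]
so the left side of \eqref{plan-key} becomes $\int_{\bR^{nd}}\prod_{i=1}^{n}\dot W^\e(y_i)\,\big[\int_{T_n(t)}\prod_{i=1}^nG(\tau_i-\tau_{i-1},y_i-y_{i-1})\,d\tau\big]\,dy_1\cdots dy_n$.

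It remains to recognise the bracketed integral as $f_n(y_1,\dots,y_n,x;t)$. In the Poisson picture the consecutive time increments $(\tau_i-\tau_{i-1})_{i=1}^n$ sum to $\tau_n\le t$ and the ``leftover'' interval $t-\tau_n$ sits at the end, whereas in \eqref{def-fn} the increments $(t_{i+1}-t_i)_{i=1}^n$ sum to $t-t_1<t$ with the leftover $t_1$ at the start. The reflection $\tau_i\mapsto t-\tau_{n+1-i}$ maps $T_n(t)$ onto itself with unit Jacobian and converts one family of increments into the other; simultaneously relabelling the dummy variables $y_j\mapsto y_{n+1-j}$ --- permissible because the weight $\prod_i\dot W^\e(y_i)$ is symmetric in $(y_1,\dots,y_n)$ --- and using the evenness $G(t,-z)=G(t,z)$ of the wave kernel in its spatial argument turns the bracket into $f_n(y_1,\dots,y_n,x;t)$ (with $t_{n+1}=t$, $y_{n+1}=x$), which is \eqref{plan-key}. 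That all integrals are finite and that $\bE^{N,X}$ commutes with the sum over $n$ (absolutely convergent for fixed $\e$ and $\omega$) is routine: $|U_i|<1$ a.s.\ forces $X^x_{\tau_i}\in\overline B(x,t)$ on $\{N_t=n\}$, so the integrand is bounded there, while AM--GM gives $\int_{T_n(t)}\prod_{i=1}^n(\tau_i-\tau_{i-1})\,d\tau\le t^{2n}/(n^n n!)$.

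The main obstacle is the passage $\e\downarrow0$ performed on the full series rather than term by term: one must control $\|H_n^\e(t,x)-H_n(t,x)\|_{L^2(\Omega)}$ uniformly in $\e$ by a sequence summable in $n$. This is precisely where condition (C), through \eqref{FG-bound}--\eqref{sup-ineq}, is needed and where the argument reuses the estimates developed for Theorem \ref{main}: since $H_n^\e$ is a mollified multiple Stratonovich integral, both its $L^2$-norm and its limit involve the contractions of $f_n$ against $\gamma$, and it is the convergence and uniform (in $n$) summability of these contraction terms that forces (C) in place of Dalang's condition (D). A secondary, purely bookkeeping, difficulty is getting the reflection-and-relabelling above exactly right, because the ``leftover'' time interval sits at opposite ends in the interpolation picture and in the definition of $f_n$.
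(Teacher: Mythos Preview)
Your identity \eqref{plan-key} is exactly relation \eqref{FK-Hn-e}, proved in Lemma~\ref{lemma-FK-ve} of the paper, and your derivation of it (conditional density of the interpolation process, reflection $\tau_i\mapsto t-\tau_{n+1-i}$, relabelling of the $y_j$, evenness of $G$) matches the paper's. So the first half of your argument is correct and coincides with what is already done in Section~\ref{section-ve}.

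The gap is in the passage $\e\downarrow 0$. You write that ``the $n$-th term of \eqref{FK}, which \eqref{def-FK-Hn} defines precisely as this $\e\downarrow0$ limit, equals $e^{-t}H_n(t,x)$'', but this misreads \eqref{def-FK-Hn}: that relation defines the $n$-th term as
\[
\bE^{N}\Big[1_{\{N_t=n\}}\prod_{i}(\tau_i-\tau_{i-1})\,I_n^{\circ}(g_N)\Big],
\]
where $g_N({\bf x})=\prod_i G(\tau_i-\tau_{i-1},x_i-x_{i-1})/(\tau_i-\tau_{i-1})$ and $I_n^{\circ}$ is the multiple Stratonovich integral of Section~\ref{section-multiple-Str}. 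The $\e$-limit sits \emph{inside} $\bE^N$ (it defines $I_n^{\circ}(g_N)$ for each Poisson path), whereas your argument takes the $\e$-limit of $\bE^{N}[\cdots\prod_i\dot W^{\e}(\cdot)]$ from the outside. Identifying the two requires proving
\[
\lim_{\e\downarrow 0}\bE^{N}\Big[\cdots\int g_N\prod_i\dot W^{\e}(x_i)\,d{\bf x}\Big]=\bE^{N}\Big[\cdots\lim_{\e\downarrow 0}\int g_N\prod_i\dot W^{\e}(x_i)\,d{\bf x}\Big],
\]
for which you would need a dominated-convergence bound on $\big\|\int g_N\prod_i\dot W^{\e}\,d{\bf x}\big\|_{L^2(\Omega_W)}$ uniform in the Poisson path, something you do not supply. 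The paper avoids this interchange entirely: in Section~\ref{section-FK} it never lets $\e\downarrow 0$ in \eqref{FK-Hn-e}, but instead proves directly that $H_n(t,x)$ equals the expression in \eqref{def-FK-Hn} by applying the Poisson representation \eqref{link-T-N} and a stochastic Fubini lemma for Stratonovich integrals (relation \eqref{e-id4}), which allows $\bE^N$ to be pulled through $\prod_i W^{\circ}(dx_i)$. Your approach can be completed, but the missing step is exactly the one the paper's route is designed to bypass.
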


\bigskip

This article is organized as follows. In Section \ref{section-ve}, we analyze equation \eqref{wave-m} with mollified noise. The proofs of Theorems \ref{main} and \ref{FK-th} are given in Sections \ref{section-construction} and \ref{section-FK}, respectively.
In Section \ref{section-mu-finite}, we examine the particular case of a very smooth noise $W$, which has finite spectral measure $\mu$. In Section \ref{section-comparison}, we give the chaos expansion of the Stratonovich solution and compare it with the chaos expansion of the Skorohod solution. Appendices A-C contain some auxiliary results needed in the sequel.

\section{Equation with mollified noise}
\label{section-ve}

In this section, we study equation \eqref{wave-m} with mollified noise $\dot{W}^{\e}$.

\subsection{Existence of solution}

The mollified noise $\{\dot{W}^{\e}(x)\}_{x \in \bR^d}$ is a zero-mean Gaussian process with stationary increments (in Yaglom sense) and covariance function:
\[
\bE|\dot{W}^{\e}(x)-\dot{W}^{\e}(y)|^2=\int_{\bR^d}(1-e^{i\xi \cdot (x-y)})^2 e^{-\e|\xi|^2}\mu(d\xi).
\]

Under condition (D),
\[
\bE|\dot{W}^{\e}(x)|^2=\|p_{\e}(x-\cdot)\|_{\cH}^2=\int_{\bR^d}e^{-\e|\xi|^2}\mu(d\xi) \leq \int_{\bR^d}\frac{1}{1+\e |\xi|^2}\mu(d\xi)=:C_{\mu,\e}<\infty
\]
and
\begin{equation}
\label{bound-We}
\bE|\dot{W}^{\e}(x)|^p=z_{p} (\bE|\dot{W}^{\e}(x)|^2)^{p/2}\leq z_p C_{\mu,\e}^{p/2} \quad \mbox{for any $p>0$},
\end{equation}
where
\[
z_p=E|Z|^p=\frac{2^{p/2}}{\sqrt{\pi}}\Gamma\left(\frac{p+1}{2}\right) \quad \mbox{with $Z \sim N(0,1)$}.
\]

\begin{definition}
{\rm
We say that the random field $v^{\e}=\{v^{\e}(t,x); t\geq 0, x\in \bR^d\}$ is a {\bf solution} of equation \eqref{wave-m} if for any $t>0$ and $x \in \bR^d$, with probability $1$,
\begin{equation}
\label{wave-m-1}
v^{\e}(t,x)=1+\int_0^t \int_{\bR^d}G(t-s,x-y)v^{\e}(s,y)\dot{W}^{\e}(y)dyds.
\end{equation}
}
\end{definition}

Intuitively, $v^{\e}(t,x)$ should be given by the series:
\begin{equation}
\label{ve-series}
v^{\e}(t,x)=1+\sum_{n\geq 1}\int_{T_n(t)} \int_{\bR^{nd}} \prod_{i=1}^{n}G(t_{i+1}-t_{i},x_{i+1}-x_{i}) \prod_{i=1}^{n}\dot{W}^{\e}(x_i)d{\bf x} d{\bf t},
\end{equation}
where ${\bf x}=(x_1,\ldots,x_n)$, ${\bf t}=(t_1,\ldots,t_n)$ and we let $t_{n+1}=t$ and $x_{n+1}=x$.
 
\medskip

Note that \eqref{ve-series} cannot be obtained by a pathwise application of Proposition 2.2 of \cite{DMT08}, since the potential $\dot{W}^{\e}(x)$ is not bounded.
We will justify below relation \eqref{ve-series} for fixed $\e>0$. The fact that the series \eqref{ve-series} converges {\em uniformly} in $\e$ will be proved in Section \ref{section-n-conv}.

\medskip

Let $
v_n^{\e}(t,x)=1+\sum_{k=1}^{n}H_k^{\e}(t,x)$ be the $n$-th Picard iteration, where
\begin{equation}
\label{def-Hne}
H_n^{\e}(t,x)=\int_{T_n(t)} \int_{\bR^{nd}} \prod_{i=1}^{n}G(t_{i+1}-t_i,x_{i+1}-x_i) \prod_{i=1}^{n}\dot{W}^{\e}(x_i)d{\bf x} d{\bf t}.
\end{equation}

Note that $H_n^{\e}(t,x)$ is well-defined due to
inequality \eqref{bound-Hne} below.
Let $H_0^{\e}(t,x)=1$. We have the following recurrence relation: for any $n\geq 0$,
\[
H_{n+1}^{\e}(t,x)=\int_{0}^{t}\int_{\bR^d}G(t-s,x-y)H_n^{\e}(s,y)\dot{W}^{\e}(y)dyds.
\]

\medskip

The next result shows that the series \eqref{ve-series} converges, for any $\e>0$ fixed.

\begin{lemma}
\label{vne-conv}
Under condition (D), for any $\e>0$ fixed, $\lim_{n \to \infty}v_n^{\e}(t,x)=v^{\e}(t,x)$ exists in $L^p(\Omega)$, uniformly in $[0,T] \times \bR^d$, for any $p\geq 1$ and $T>0$, i.e.
 \[
\sup_{(t,x) \in [0,T] \times \bR^d}\bE|v_n^{\e}(t,x)-v^{\e}(t,x)|^p \to 0 \quad \mbox{as $n \to \infty$}.
 \]
 Moreover, $v^{\e}$ is a solution of equation \eqref{wave-m}.
\end{lemma}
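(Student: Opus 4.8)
The plan is to show that the Picard iterations $v_n^{\e}$ form a Cauchy sequence in $L^p(\Omega)$ uniformly on $[0,T]\times\bR^d$, by bounding the $L^p$-norm of each increment $H_n^{\e}(t,x) = v_n^{\e}(t,x) - v_{n-1}^{\e}(t,x)$ and summing. First I would estimate $\bE|H_n^{\e}(t,x)|^p$ directly from its definition \eqref{def-Hne}. Applying Minkowski's integral inequality to pull the $L^p(\Omega)$-norm inside the $(n+nd)$-fold integral over $T_n(t)\times\bR^{nd}$, one is left to control $\big\|\prod_{i=1}^n \dot W^{\e}(x_i)\big\|_{L^p(\Omega)}$. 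Since $(\dot W^{\e}(x_1),\dots,\dot W^{\e}(x_n))$ is a Gaussian vector, by the generalized Hölder inequality and \eqref{bound-We} this is bounded by $\prod_{i=1}^n \|\dot W^{\e}(x_i)\|_{L^{np}(\Omega)} = (z_{np})^{1/p}\, C_{\mu,\e}^{n/2}$, a bound that does not depend on the $x_i$. Then $\bE|H_n^{\e}(t,x)|^p \le (z_{np}) C_{\mu,\e}^{np/2}\Big(\int_{T_n(t)}\int_{\bR^{nd}}\prod_{i=1}^n G(t_{i+1}-t_i, x_{i+1}-x_i)\,d{\bf x}\,d{\bf t}\Big)^p$. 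Using $\int_{\bR^d} G(s,y)\,dy = s$ and integrating out the space variables one factor at a time, the spatial integral equals $\prod_{i=1}^n (t_{i+1}-t_i)$ (with $t_{n+1}=t$), and integrating this over the simplex $T_n(t)$ gives at most $t^{2n}/n!$ (in fact $t^n \cdot t^n/n!$, crudely bounding each factor $t_{i+1}-t_i \le t$). So $\bE|H_n^{\e}(t,x)|^p \le z_{np}\, C_{\mu,\e}^{np/2}\, (T^{2n}/n!)^p$, which is exactly the role of the advertised inequality \eqref{bound-Hne}.

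With this estimate in hand, the series $\sum_n \|H_n^{\e}(t,x)\|_{L^p(\Omega)} \le \sum_n (z_{np})^{1/p} C_{\mu,\e}^{n/2}\, T^{2n}/n!$ converges, uniformly in $(t,x)\in[0,T]\times\bR^d$; here one uses that $z_{np}^{1/p} = \big(\tfrac{2^{np/2}}{\sqrt\pi}\Gamma(\tfrac{np+1}{2})\big)^{1/p}$ grows only like $(Cn)^{n/2}$ up to subexponential factors, which is dominated by the $1/n!$ from the simplex (Stirling: $n^{n/2}/n! \to 0$ super-exponentially). Hence $\{v_n^{\e}(t,x)\}_n$ is Cauchy in $L^p(\Omega)$ uniformly on $[0,T]\times\bR^d$, and we define $v^{\e}(t,x)$ as its limit; the convergence statement of the lemma follows.

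It remains to check that $v^{\e}$ solves \eqref{wave-m-1}. I would pass to the limit in the recurrence $H_{n+1}^{\e}(t,x)=\int_0^t\int_{\bR^d}G(t-s,x-y)H_n^{\e}(s,y)\dot W^{\e}(y)\,dy\,ds$, equivalently $v_{n+1}^{\e}(t,x) = 1 + \int_0^t\int_{\bR^d}G(t-s,x-y)v_n^{\e}(s,y)\dot W^{\e}(y)\,dy\,ds$. The left side converges to $v^{\e}(t,x)$ in $L^p(\Omega)$. For the right side, write the difference from the candidate limit as $\int_0^t\int_{\bR^d}G(t-s,x-y)\big(v_n^{\e}(s,y)-v^{\e}(s,y)\big)\dot W^{\e}(y)\,dy\,ds$, take $L^p(\Omega)$-norms, apply Minkowski's integral inequality and Hölder (splitting $v_n^{\e}-v^{\e}$ in $L^{2p}(\Omega)$ against $\dot W^{\e}(y)$ in $L^{2p}(\Omega)$, the latter bounded by $(z_{2p})^{1/(2p)}C_{\mu,\e}^{1/2}$ uniformly in $y$), and use the uniform convergence $\sup_{(s,y)}\|v_n^{\e}(s,y)-v^{\e}(s,y)\|_{L^{2p}(\Omega)}\to 0$ together with $\int_0^t\int_{\bR^d}G(t-s,x-y)\,dy\,ds = \int_0^t (t-s)\,ds = t^2/2 <\infty$ to conclude the right side converges to $1+\int_0^t\int_{\bR^d}G(t-s,x-y)v^{\e}(s,y)\dot W^{\e}(y)\,dy\,ds$. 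Identifying the two limits gives \eqref{wave-m-1}.

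The main obstacle is the first step: getting a clean factorized $L^p(\Omega)$ bound on $\prod_{i=1}^n \dot W^{\e}(x_i)$ that is uniform in the spatial points (so that the deterministic kernel integral separates out cleanly), and then checking that the Gaussian moment constants $z_{np}$ do not destroy the summability coming from the $1/n!$ factor. Everything else is a routine Picard-iteration/Gronwall-type argument. Note that only Dalang's condition (D) is needed here, through the finiteness of $C_{\mu,\e}=\int_{\bR^d}\frac{1}{1+\e|\xi|^2}\mu(d\xi)$ for each fixed $\e>0$ — the stronger condition (C) and the uniformity in $\e$ are deferred to Section \ref{section-n-conv}.
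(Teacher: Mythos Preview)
Your proposal is correct and follows essentially the same approach as the paper: Minkowski's inequality plus generalized H\"older on the product $\prod_i \dot W^{\e}(x_i)$, then integrate out the deterministic kernel, and pass to the limit in the recurrence. The only minor differences are bookkeeping: the paper evaluates $\int_{T_n(t)}\prod_i(t_{i+1}-t_i)\,d{\bf t}=t^{2n}/(2n)!$ exactly (yielding a $1/(n!)^{3/2}$ decay in \eqref{bound-Hne}), whereas you use the cruder bound $t^{2n}/n!$, which still suffices for summability; and for the limit in the integral equation the paper uses an $L^1(\Omega)$ argument via Cauchy--Schwarz rather than your $L^{2p}\times L^{2p}$ H\"older split, but both work.
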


\begin{proof} Let $\|\cdot\|_p$ be the norm in $L^p(\Omega)$.
Using Minkowski inequality, the generalized H\"older inequality and inequality \eqref{bound-We}, we obtain that for any $p \geq 1$,
\begin{align*}
\|H_n^{\e}(t,x)\|_p & \leq \int_{T_n(t)} \int_{\bR^{nd}} \prod_{i=1}^{n}G(t_{i+1}-t_i,x_{i+1}-x_i)\Big\|\prod_{i=1}^{n}\dot{W}^{\e}(x_i)\Big\|_p d{\bf x} d{\bf t} \\
 & \leq \int_{T_n(t)} \int_{\bR^{nd}} \prod_{i=1}^{n}G(t_{i+1}-t_i,x_{i+1}-x_i)\prod_{i=1}^{n}\|\dot{W}^{\e}(x_i)\|_{pn} d{\bf x} d{\bf t}\\
 &\leq z_{pn}^{1/p} C_{\e}^{n/2}\int_{T_n(t)} \int_{\bR^{nd}}\prod_{i=1}^{n}G(t_{i+1}-t_i,x_{i+1}-x_i)d{\bf x} d{\bf t} \\
 &=z_{pn}^{1/p} C_{\mu,\e}^{n/2} \int_{T_n(t)} \prod_{i=1}^{n}(t_{i+1}-t_i)d{\bf t}\\
 & =\left\{\frac{2^{pn/2}}{\sqrt{\pi}} \Gamma\left(\frac{pn+1}{2}\right)\right\}^{1/p} C_{\mu,\e}^{n/2} \frac{t^{2n}}{(2n)!}.
\end{align*}
 Since $\Gamma(\frac{pn+1}{2}) \leq C_p^n (n!)^{p/2}$ and $(2n)! \geq c_0^n (n!)^2$ (due to Stirling's formula), we obtain:
\begin{equation}
\label{bound-Hne}
\|H_n^{\e}(t,x)\|_p \leq \frac{2^{n/2}}{\pi^{1/(2p)}}C_{p}^{n/p} C_{\mu,\e}^{n/2}c_0^n \frac{t^{2n}}{(n!)^{3/2}}.
\end{equation}
Hence,
\[
\sum_{n\geq 0}\sup_{(t,x) \in [0,T] \times \bR^d}\|H_n^{\e}(t,x)\|_{p}<\infty
\]
and the sequence $\{v_n^{\e}(t,x)\}_{n}$ is Cauchy in $L^p(\Omega)$, uniformly in $(t,x) \in [0,T] \times \bR^d$.

To prove that $v^{\e}$ is a solution to \eqref{wave-m}, we let $n\to \infty$ in the recurrence relation:
\begin{equation}
\label{rec-vne}
v_{n+1}^{\e}(t,x)=1+\int_0^t \int_{\bR^d} G(t-s,x-y)v_n^{\e}(s,y)\dot{W}^{\e}(y)dyds.
\end{equation}
The left-hand side converges in $L^p(\Omega)$ to $v^{\e}(t,x)$. The right-hand side converges in $L^1(\Omega)$ to $\int_0^t \int_{\bR^d} G(t-s,x-y)v^{\e}(s,y)\dot{W}^{\e}(y)dyds$, by the dominated convergence theorem. To justify the application of this theorem, we use the fact that
\[
\bE|\big(v_n^{\e}(s,y)-v^{\e}(s,y)\big)\dot{W}^{\e}(y)| \leq \left(\bE|v_n^{\e}(s,y)-v^{\e}(s,y)|^2\right)^{1/2} \left(\bE|\dot{W}^{\e}(y)|^2\right)^{1/2}.
\]
The last term is bounded by a constant (that depends on $\e$), due to \eqref{bound-We} and the uniform convergence of $(v_{n}^{\e})_n$.
\end{proof}

\subsection{Feynman-Kac-type formula}

 In this section, we give a Feynman-Kac-type representation for $v^{\e}$. Note that this cannot be obtained directly by a pathwise application of Theorem 3.2 of \cite{DMT08} since the random potential $W^{\e}$ is not bounded.

 \medskip

 Recall the definition of the linearly interpolated process $(X_t)_{t\geq 0}$ given in the introduction.
Note that $X_{\tau_{i+1}}-X_{\tau_i}=(\tau_{i+1}-\tau_i)U_i$. Hence
$(X_{\tau_{i+1}}-X_{\tau_i})_{i\geq 0}$ are independent given $N$.
Since $tU_i$ has density $G(t,\cdot)/t$ for any $t>0$, $X_{\tau_{i+1}}-X_{\tau_i}$ has density $G(\tau_{i+1}-\tau_i,\cdot)/(\tau_{i+1}-\tau_i)$ given $N$. It follows that:
\begin{equation}
\label{cond-den}
(x_1,\ldots,x_n) \to
\prod_{i=1}^{n}\frac{G(\tau_i-\tau_{i-1},x_i-x_{i-1})}{\tau_i-\tau_{i-1}} \ \mbox{is the density of $(X_{\tau_1}^x,\ldots,X_{\tau_n}^x)$ given $N$},
\end{equation}
with the convention $x_0=x$.

\medskip

The key idea is the following identity, which gives a representation of an integral over the simplex $T_n(t)$ using the points of the Poisson process $N$: for any integrable function $h$ on $T_n(t)$,
\begin{equation}
\label{link-T-N}
\int_{T_n(t)}h(t_1,\ldots,t_n)dt_1 \ldots dt_n=e^t \bE[h(\tau_1,\ldots,\tau_n)1_{\{N_t=n\}}],
\end{equation}
 This identity follows from the fact that $(\tau_1,\ldots,\tau_n)$ has a uniform distribution over $T_n(t)$, given $N_t=n$.

\medskip

We will use the following general facts. If $X:\Omega \to \cX$ and $Y:\Omega \to \cY$ are independent random variables, then for any non-negative measurable functions $f,g,h$, we have
\begin{equation}
\label{e-id1}
\bE^X[f(X,Y)]=\bE[f(X,Y)|Y]
\end{equation}
\begin{equation}
\label{e-id2}
\bE^X[g(X)h(Y)]= \bE[g(X)] \, h(Y),
\end{equation}
where $\bE^X$ denotes the expectation with respect to $X$. More precisely, $\bE^X[f(X,Y)]$ is the random variable given by $\bE^X[f(X,Y)]=\int_{\cX}f(x,Y)\mu_{X}(dx)$, where $\mu_{X}$ is the law of $X$.

\medskip
 The next result gives the Feynman-Kac-type representation of $v^{\e}$.

\begin{lemma}
\label{lemma-FK-ve}
For any $\e>0$, $t>0$ and $x \in \bR^d$, with probability $1$,
\begin{equation}
\label{FK-ve}
v^{\e}(t,x)=e^t \bE^{N,X} \Big[ \prod_{i=1}^{N_t}(\tau_i-\tau_{i-1}) \prod_{i=1}^{N_t} \dot{W}^{\e}(X_{\tau_i}^x)\Big].
\end{equation}
\end{lemma}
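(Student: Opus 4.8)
The plan is to use the series representation of $v^{\e}(t,x)$ established in Lemma \ref{vne-conv}, namely $v^{\e}(t,x)=1+\sum_{n\geq 1}H_n^{\e}(t,x)$ with $H_n^{\e}$ given by \eqref{def-Hne}, and to show that each term $H_n^{\e}(t,x)$ equals the corresponding term in the Poisson-interpolation expansion, i.e.
\[
H_n^{\e}(t,x)=e^t\,\bE^{N,X}\Big[1_{\{N_t=n\}}\prod_{i=1}^{n}(\tau_i-\tau_{i-1})\prod_{i=1}^{n}\dot{W}^{\e}(X_{\tau_i}^x)\Big],
\]
and then sum over $n$. The argument is essentially deterministic in the randomness of $W$: I would fix (a version of) $W$, hence fix the (continuous, since $\dot W^{\e}$ is smooth) function $y\mapsto \dot W^{\e}(y)$, and work with the integral in \eqref{def-Hne} as an ordinary Lebesgue integral over $T_n(t)\times\bR^{nd}$.

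First I would rewrite the spatial integral in $H_n^{\e}(t,x)$ using the change-of-variables interpretation of \eqref{cond-den}: for fixed times $t_1<\dots<t_n$,
\[
\int_{\bR^{nd}}\prod_{i=1}^{n}G(t_{i+1}-t_i,x_{i+1}-x_i)\prod_{i=1}^{n}\dot W^{\e}(x_i)\,d\mathbf{x}
=\Big(\prod_{i=1}^{n}(t_{i+1}-t_i)\Big)\,\bE^{X}\Big[\prod_{i=1}^{n}\dot W^{\e}(X_{t_i}^{*,x})\Big],
\]
where I need to match the product $\prod_i G(t_{i+1}-t_i,x_{i+1}-x_i)$ — whose chained-increment structure runs from $x$ "backwards" — against the conditional density in \eqref{cond-den}. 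Concretely, the substitution $x_i\mapsto X_{t_i}^x$ identifies $\prod_{i=1}^{n}\frac{G(t_i-t_{i-1},x_i-x_{i-1})}{t_i-t_{i-1}}$ (with $x_0=x$) as the density of $(X_{t_1}^x,\dots,X_{t_n}^x)$ for the interpolated process run at deterministic times $t_1<\dots<t_n$; up to relabeling the index set by the symmetry of integrating over the ordered simplex and the fact that $G(t,\cdot)$ is symmetric, this is the same product that appears in \eqref{def-Hne}. So the spatial integral becomes $\prod_{i=1}^{n}(t_i-t_{i-1})$ times a conditional expectation over the $(U_i)$ (equivalently, over $X$ at those fixed times). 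Then I would invoke the key identity \eqref{link-T-N} with $h(t_1,\dots,t_n)=\prod_{i=1}^n(t_i-t_{i-1})\,\bE^{X}[\prod_i \dot W^{\e}(X_{t_i}^x)]$ to turn the time-simplex integral into $e^t\,\bE^{N}[\,h(\tau_1,\dots,\tau_n)1_{\{N_t=n\}}\,]$, and finally use \eqref{e-id1}–\eqref{e-id2} together with the independence of $N$, $(U_i)$ and $W$ to combine the $N$-expectation and the conditional $X$-expectation into the single $\bE^{N,X}[1_{\{N_t=n\}}\cdots]$ written in \eqref{FK-ve}.

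To finish, I would sum over $n\geq 0$ (the $n=0$ term being $e^t\bE^{N,X}[1_{\{N_t=0\}}]=e^t e^{-t}=1$, matching the constant $1$ in the series for $v^{\e}$). The interchange of summation and the $\bE^{N,X}$-expectation is justified by absolute convergence: for fixed $W$, $\sum_n e^t\bE^{N,X}[1_{\{N_t=n\}}\prod_i(\tau_i-\tau_{i-1})\prod_i|\dot W^{\e}(X_{\tau_i}^x)|]$ is finite because it equals $\sum_n \int_{T_n(t)}\int_{\bR^{nd}}\prod_i G\,\prod_i|\dot W^{\e}(x_i)|\,d\mathbf{x}\,d\mathbf{t}$, and each inner integral is bounded using $\int_{\bR^d}G(s,y)\,dy=s$ and $\prod_i(t_{i+1}-t_i)\le t^n$, giving a summand $\le (\sup|\dot W^{\e}|\text{ on the relevant ball})^n\, t^{2n}/(2n)!$ — or, more cleanly, I would just take $L^1(\Omega)$-expectations and reuse the moment bound \eqref{bound-Hne} from Lemma \ref{vne-conv}, which already shows $\sum_n \|H_n^{\e}(t,x)\|_1<\infty$, combined with Tonelli to move the $W$-expectation past the $N,X$-expectation.

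The main obstacle I anticipate is purely bookkeeping rather than analytic: getting the index alignment exactly right between the chained product $\prod_{i=1}^n G(t_{i+1}-t_i,x_{i+1}-x_i)$ in \eqref{def-Hne} (which is anchored at the endpoint $x_{n+1}=x$, $t_{n+1}=t$ and runs downward) and the forward conditional density $\prod_{i=1}^n G(\tau_i-\tau_{i-1},x_i-x_{i-1})/(\tau_i-\tau_{i-1})$ in \eqref{cond-den} (anchored at $x_0=x$, $\tau_0=0$ and running upward). The resolution uses two facts: $G(s,\cdot)$ is an even function, so each factor $G(t_{i+1}-t_i,x_{i+1}-x_i)=G(t_{i+1}-t_i,x_i-x_{i+1})$; and integrating over $T_n(t)$ against the product of gaps $\prod(t_{i+1}-t_i)$ is invariant under the order-reversing relabeling $t_i\leftrightarrow t-t_{n+1-i}$, which also reverses the spatial chain. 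After this relabeling the integrand is exactly $\prod(t_i-t_{i-1})$ times the density \eqref{cond-den} evaluated along the chain, so the $X$-interpretation applies verbatim. Once that identification is made, everything else is a direct application of the already-established identities \eqref{link-T-N}, \eqref{e-id1} and \eqref{e-id2}.
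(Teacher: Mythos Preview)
Your proposal is correct and follows essentially the same route as the paper: the time-reversal change of variables $\bar t_i=t-t_{n+1-i}$, $\bar x_i=x_{n+1-i}$ (which you identify as the main bookkeeping obstacle) is exactly what the paper uses to convert \eqref{def-Hne} into the forward form anchored at $x_0=x$, after which the Poisson representation \eqref{link-T-N} and the conditional density \eqref{cond-den} are applied just as you describe. The only cosmetic difference is that the paper packages the passage from the iterated $dx$-integral against the density \eqref{cond-den} to the expectation $\bE^{N,X}$ into a separate lemma (Lemma~\ref{elem-lem}), and handles the summation over $n$ by passing to an a.s.\ convergent subsequence of $v_n^{\e}(t,x)$ rather than by a direct absolute-convergence argument; your second justification for the summation (via the $L^1$ bound \eqref{bound-Hne}) is the right one, since your first attempt using a pathwise supremum of $\dot W^{\e}$ would need the supremum over a ball of radius $\sim nt$, which grows with $n$.
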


\begin{proof} Fix $\e>0$, $t>0$ and $x \in \bR^d$. By Lemma \ref{vne-conv}, $v_n^{\e}(t,x) \to v^{\e}(t,x)$ a.s. along a subsequence. So it is enough to prove that for any $n\geq 1$, with probability $1$,
\[
v_n^{\e}(t,x)=e^{t}\bE^{N,X}\Big[  1_{\{N_t\leq n\}} \prod_{i=1}^{N_t}(\tau_i-\tau_{i-1}) \prod_{i=1}^{N_t}\dot{W}^{\e}(X_{\tau_i}^x)\Big],
\]
and then let $n\to \infty$. For this, it suffices to prove that for any $n \geq 1$, with probability $1$,
\begin{equation}
\label{FK-Hn-e}
H_n^{\e}(t,x)=e^{t}\bE^{N,X}\Big[  1_{\{N_t=n\}}\prod_{i=1}^{n}(\tau_i-\tau_{i-1}) \prod_{i=1}^{n}\dot{W}^{\e}(X_{\tau_i}^x) \Big].
\end{equation}
In definition \eqref{def-Hne} of $H_n(t,x)$, we use the fact that $G(t,x)=G(t,-x)$ and the change of variables
$\bar{t}_i=t-t_{n+1-i}$ and $\bar{x}_i=x_{n+1-i}$ for all $i=1,\ldots,n$. Then
\begin{equation}
\label{Hne-2}
H_n^{\e}(t,x)=\int_{T_n(t)} \int_{(\bR^{d})^n} \prod_{i=1}^{n}G(t_{i}-t_{i-1},x_{i}-x_{i-1}) \prod_{i=1}^{n}\dot{W}^{\e}(x_i) 
d{\bf x} d{\bf t},
\end{equation}
with $t_0=0$ and $x_0=x$.
We interchange the integrals. Using representation \eqref{link-T-N} for the integral over the simplex, followed by \eqref{e-id2} (since $W$ and $N$ are independent), we obtain:
\begin{align*}
H_n^{\e}(t,x)&=e^t \int_{(\bR^{d})^n} \bE\Big[ \prod_{i=1}^{n} G(\tau_i-\tau_{i-1},x_i-x_{i-1})1_{\{N_t=n\}}\Big] \prod_{i=1}^{n}\dot{W}^{\e}(x_i)d{\bf x}\\
&=e^t \int_{(\bR^{d})^n} \bE^{N}\Big[ \prod_{i=1}^{n} G(\tau_i-\tau_{i-1},x_i-x_{i-1})1_{\{N_t=n\}} \prod_{i=1}^{n}\dot{W}^{\e}(x_i)\Big]d{\bf x} \\
&=e^t \bE^N\Big[1_{\{N_t=n\}}\prod_{i=1}^{n}(\tau_i-\tau_{i-1}) \int_{(\bR^{d})^n}  \prod_{i=1}^{n} \frac{G(\tau_i-\tau_{i-1},x_i-x_{i-1})}{\tau_i-\tau_{i-1}} \prod_{i=1}^{n}\dot{W}^{\e}(x_i)d{\bf x}\Big].
\end{align*}
Relation \eqref{FK-Hn-e} follows using Lemma \ref{elem-lem} below with $X=(X_{\tau_1}^x,\ldots,X_{\tau_n}^x)$, $Y=N$, $Z=W$,
\[
g(x_1,\ldots,x_n,W)=\prod_{i=1}^{n}\dot{W}^{\e}(x_i) \quad \mbox{and} \quad
h(N)=1_{\{N_t=n\}} \prod_{i=1}^{n}(\tau_i-\tau_{i-1}),
\]
noticing that
\[
f_{(X_{\tau_1}^x,\ldots,X_{\tau_n}^x)|N}(x_1,\ldots,x_n|N)=
\prod_{i=1}^{n} \frac{G(\tau_i-\tau_{i-1},x_i-x_{i-1})}{\tau_i-\tau_{i-1}}
\]
(with $x_0=x$) is the conditional density of $(X_{\tau_1}^x,\ldots,X_{\tau_n}^x)$ given $N$ (see \eqref{cond-den}).
\end{proof}

\begin{lemma}
\label{elem-lem}
If random variables $X:\Omega\to \cX$, $Y:\Omega \to \cY$ and $Z:\Omega \to \cZ$ are such that $Z$ is independent of $(X,Y)$ and the conditional density $f_{X|Y}(x|Y)$ of $X$ given $Y$ exists, then for any non-negative measurable functions $g$ and $h$,
\[
\bE^Y\Big[h(Y)\int_{\cX} f_{X|Y}(x|Y)g(x,Z)dx \Big]=\bE^{Y,X} \Big[h(Y) g(X,Z) \Big].
\]
\end{lemma}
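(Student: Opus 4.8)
The plan is to unwind the notation on both sides into honest integrals against the relevant laws, and then recognize the resulting identity as the disintegration formula $\mu_{(X,Y)}(dx\,dy)=f_{X|Y}(x|y)\,dx\,\mu_Y(dy)$ combined with Tonelli's theorem.

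First I would fix a point $z\in\cZ$ and treat it as a constant throughout. Since $Z$ is independent of $(X,Y)$, the operator $\bE^{Y,X}$ applied to $h(Y)g(X,Z)$ produces, at the value $Z=z$, the deterministic number
\[
\bE^{Y,X}\big[h(Y)g(X,Z)\big]\Big|_{Z=z}=\int_{\cX\times\cY}h(y)\,g(x,z)\,\mu_{(X,Y)}(dx\,dy),
\]
where $\mu_{(X,Y)}$ is the joint law of $(X,Y)$; likewise $\bE^Y$ applied to the left-hand integrand yields, at $Z=z$,
\[
\int_{\cY}h(y)\left(\int_{\cX}f_{X|Y}(x|y)\,g(x,z)\,dx\right)\mu_Y(dy).
\]
So it suffices to show these two numbers coincide for every $z$ (equivalently, for $\mu_Z$-almost every $z$, which is all that is needed later).

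Next I would invoke the defining property of the conditional density: for every non-negative measurable $\Phi$ on $\cX\times\cY$,
\[
\bE[\Phi(X,Y)]=\int_{\cX\times\cY}\Phi(x,y)\,\mu_{(X,Y)}(dx\,dy)=\int_{\cY}\left(\int_{\cX}\Phi(x,y)\,f_{X|Y}(x|y)\,dx\right)\mu_Y(dy),
\]
the inner integral being a measurable function of $y$ by Tonelli. Applying this with $\Phi(x,y)=h(y)\,g(x,z)$, which is non-negative and measurable, gives precisely the equality of the two displayed expressions above. Tonelli also guarantees that $y\mapsto\int_{\cX}f_{X|Y}(x|y)g(x,z)\,dx$ is measurable, so that $\bE^Y$ of it is meaningful, and that the whole construction is jointly measurable in $(y,z)$, so the two sides agree as functions of $Z$, hence almost surely.

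There is essentially no hard analytic obstacle here; the only care needed is bookkeeping. The one point that must be handled cleanly is the role of $Z$: one must use $Z\perp(X,Y)$ to reduce both $\bE^{Y,X}$ and $\bE^Y$ to integrals over the laws of $(X,Y)$ and of $Y$ with $Z$ frozen — otherwise "the conditional density of $X$ given $Y$" is not the object one is disintegrating against. Once $Z$ is frozen, the identity is literally the definition of $f_{X|Y}$ together with Tonelli's theorem for the non-negative integrand $h(y)g(x,z)$.
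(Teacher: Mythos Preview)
Your proposal is correct and follows essentially the same approach as the paper's proof: both freeze $Z$ at a value $z$ using the independence of $Z$ from $(X,Y)$, rewrite each side as an integral against $\mu_Y$ or $\mu_{(X,Y)}$ with $z$ fixed, and then invoke the disintegration formula $\mu_{(X,Y)}(dx\,dy)=f_{X|Y}(x|y)\,dx\,\mu_Y(dy)$ to match the two. The paper phrases the freezing step via the identity $\bE^Y[\,\cdot\,]=\bE[\,\cdot\,|Z]$ (its relation \eqref{e-id1}) rather than ``evaluate at $Z=z$,'' but this is only a notational difference.
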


\begin{proof} We denote by $\mu_Y$ the law of $Y$ and $\mu_{Y,X}$ the law of $(Y,X)$. Then
\[
\mu_{Y,X}(B \times A)=\int_{B}P(X \in A|Y=y)\mu_{Y}(dy)=\int_{B}\int_{A}f_{X|Y}(x|y)dx\mu_{Y}(dy).
\]
Since $Y$ and $Z$ are independent, using \eqref{e-id1}, we have:
\[
\bE^Y\Big[h(Y)\int_{\bR^n} f_{X|Y}(x|Y)g(x,Z)dx \Big]=\bE\Big[h(Y)\int_{\bR^n} f_{X|Y}(x|Y)g(x,Z)dx\Big|Z\Big]=g_1(Z),
\]
where
\begin{align*}
g_1(z)&=\bE\Big[h(Y)\int_{\cX} f_{X|Y}(x|Y)g(x,z)dx \Big]=\int_{\cY} h(y)\int_{\bR^n} f_{X|Y}(x|y)g(x,z)dx \mu_{Y}(dy)\\
&=\int_{\cY \times \cX} h(y)g(x,z)\mu_{Y,X}(dy,dx).
\end{align*}
Since $(X,Y)$ and $Z$ are independent, using \eqref{e-id1}, we have:
\[
\bE^{Y,X} \Big[h(Y) g(X,Z) \Big]=\bE[\Big[h(Y) g(X,Z) |Z \Big]=g_2(Z),
\]
where $g_2(z)=\bE[h(Y) g(X,z)]$. The conclusion follows since $g_1(z)=g_2(z)$ for any $z$.
\end{proof}

\section{Proof of Theorem \ref{main}}
\label{section-construction}

In this section, we give the proof of Theorem \ref{main}, which contains also the construction of the Stratonovich solution $v$.

\medskip

As mentioned in the introduction, the proof is based on chaos expansions.
Recall that $v_n^{\e}$ is the $n$-th Picard approximations of $v^{\e}$.
The proof of \eqref{ve-v-conv} is summarized by the diagram below, in which all arrows indicate convergence in $L^{p}(\Omega)$, uniform in $(t,x)\in [0,T] \times \bR^d$:
\[
\begin{array}{ccccc}
& v_n^{\e}(t,x) & \longrightarrow_{\e} & v_n(t,x) & \mbox{for any $n$} \\
\mbox{(uniform in $\e$)} & \downarrow_n &  & \downarrow_n & \\
& v^{\e}(t,x) & --- \to_{\e} & v(t,x) &
\end{array}
\]
From this diagram, we see that there are are three facts that need to be proved:\\
{\em (i)} the convergence of $v_n^{\e}(t,x)$ to some $v_n(t,x)$ (to be determined), as $\e \downarrow 0$;\\
{\em (ii)} the convergence of $v_n(t,x)$ to some $v(t,x)$ (to be determined), as $n \to \infty$;\\
{\em (iii)} the convergence of $v_n^{\e}(t,x)$ to $v^{\e}(t,x)$ as $n \to \infty$, uniform in $\e$.

\medskip

In Section \ref{section-multiple-Str}, we examine briefly the multiple Stratonovich integrals. We will prove {\em (i)} in Section \ref{section-e-conv}, and
{\em (ii)}-{\em (iii)} in Section \ref{section-n-conv}.
The fact that $v$ is a Stratonovich solution will be proved in Section \ref{section-Stratonovich}.

\subsection{Multiple Stratonovich integrals}
\label{section-multiple-Str}

In this section, we examine the integral $I_n^{\circ}(f)$.
First, note
\begin{equation}
\label{pe*pe}
\langle p_{\e}(x_1-\cdot),p_{\e}(x_2-\cdot) \rangle_{\cH}=(p_{2\e}*\gamma)(x_1-x_2),
\end{equation}
and $p_{2\e}*\gamma=\cF \mu_{\e}$ in the sense of distributions, where
\begin{equation}
\label{def-mu-e}
\mu_{\e}(d\xi)=e^{-\e|\xi|^2} \mu(d\xi).
\end{equation}

We need to introduce some notation.
We let $[n]=\{1,\ldots,n\}$. For any set $J \subset [n]$, we let $|J|$ be the cardinality of $J$, and $J^c$ be the complement of $J$ in $[n]$. We denote $\lfloor x \rfloor=k$ if $k \in \bZ$ and $k \leq x<k+1$.

\begin{definition}
{\rm
The {\bf multiple Stratonovich integral} of order $n$ of a measurable function $f:\bR^d \to \bR$ is defined as the following limit in probability, if it exists:
\[
I_n^{\circ}(f):=\int_{(\bR^d)^n} f(x_1,\ldots,x_n) \prod_{i=1}^{n}W^{\circ}(dx_i)=\lim_{\e \downarrow 0} \int_{(\bR^d)^n} f(x_1,\ldots,x_n) \prod_{i=1}^{n}\dot{W}^{\e}(x_i) dx_1 \ldots dx_n.
\]
}
\end{definition}

Using Theorem \ref{product-n} for expressing the product $\prod_{i=1}^{n}\dot{W}^{\e}(x_i)$ and \eqref{pe*pe}, we obtain that:
\begin{align}
\label{W-product-n}
\prod_{i=1}^{n}\dot{W}^{\e}(x_i)&=\sum_{k=0}^{\lfloor n/2 \rfloor} \sum_{\substack{J \subset [n]\\ |J|=n-2k}} \sum_{\substack{\{I_1,\ldots,I_k\} \ {\rm partition} \ {\rm of} \ J^c \\ I_i=\{\ell_i,m_i\} \forall i=1,\ldots,k}} I_{n-2k} \big(\bigotimes_{j \in J}p_{\e}(x_j-\cdot) \big)\prod_{i=1}^{k}(p_{2\e}*\gamma)(x_{\ell_i}-x_{m_i}).
\end{align}

This shows that $I_n^{\circ}(f)$ exists if $I_{n-2k}(f_{k,J,I_1,\ldots,I_k}^{\e}) \stackrel{P}{\to} I_{n-2k}(f_{k,J,I_1,\ldots,I_k})$ as $\e \downarrow 0$, for any $k=0,1,\ldots,\lfloor n/2 \rfloor$, for any set $J \subset [n]$ and for any partition $\{I_1,\ldots,I_k\}$ as above,
where
\begin{align*}
f_{k,J,I_1,\ldots,I_k}^{\e}\big( (y_j)_{j\in J} \big)&=\int_{(\bR^d)^n} f(x_1,\ldots,x_n) \prod_{j \in J}p_{\e}(x_j-y_j) \prod_{i=1}^{k}(p_{2\e}*\gamma)(x_{\ell_i}-x_{m_i}) dx_1 \ldots dx_n\\
f_{k,J,I_1,\ldots,I_k}\big( (x_j)_{j\in J} \big)&=\int_{(\bR^d)^{2k}} f(x_1,\ldots,x_n) \prod_{i=1}^{k}\gamma(x_{\ell_i}-x_{m_i}) d \big( (x_j)_{j \in J^c}\big).
\end{align*}
Moreover, in this case,
\[
I_n^{\circ}(f)=\sum_{k=0}^{\lfloor n/2 \rfloor} \sum_{\substack{J \subset [n]\\ |J|=n-2k}} \sum_{\substack{\{I_1,\ldots,I_k\} \ {\rm partition} \ {\rm of} \ J^c \\ I_i=\{\ell_i,m_i\} \forall i=1,\ldots,k}} I_{n-2k}(f_{k,J,I_1,\ldots,I_k}).
\]

For $n=1$, $I_1^{\circ}(f)=\int_{\bR^d}f(x)W(dx)$ if $f \in \cH$, and hence $W^{\circ}(dx)=W(dx)$. For $n=2$, 
\begin{align*}
I_2^{\circ}(f)
&=\int_{(\bR^{d})^2} f(x_1,x_2)W(dx_1)W(dx_2)+\int_{(\bR^{d})^2} f(x_1,x_2)\gamma(x_1-x_2)dx_1 dx_2,
\end{align*}
provided that both integrals above are well-defined, and hence
\[
W^{\circ}(dx_1)W^{\circ}(dx_2)=W(dx_1)W(dx_2)+\gamma(x_1-x_2)dx_1 dx_2.
\]
For $n=3$, 
we have:
\begin{align*}
W^{\circ}(dx_1)W^{\circ}(dx_2)W^{\circ}(dx_3)&=W(dx_1)W(dx_2)W(dx_3)+W(dx_1)\gamma(x_2-x_3)dx_2 dx_3+\\
& \quad  W(dx_2)\gamma(x_1-x_3)dx_1 dx_3 +W(dx_3)\gamma(x_1-x_2)dx_1 dx_2.
\end{align*}
In general, 
\begin{align}
\label{def-product-V}
\prod_{i=1}^{n}W^{\circ}(dx_i)=\sum_{k=0}^{\lfloor n/2 \rfloor} \sum_{\substack{J \subset [n]\\ |J|=n-2k}} \sum_{\substack{\{I_1,\ldots,I_k\} \ {\rm partition} \ {\rm of} \ J^c \\ I_i=\{\ell_i,m_i\} \forall i=1,\ldots,k}}\prod_{j \in J}W(dx_j) \prod_{i=1}^{k}\gamma(x_{\ell_i}-x_{m_i})d\big((x_j)_{j \in J^c}\big).
\end{align}
Formally, we can say that
\begin{equation}
\label{formal-V}
\prod_{i=1}^n W^{\circ}(dx_i)=\lim_{\e \downarrow 0}\prod_{i=1}^{n}\dot{W}^{\e}(x_i) dx_1 \ldots dx_n =\prod_{i=1}^{n}\dot{W}(x_i) dx_1 \ldots dx_n.
\end{equation}

\subsection{Convergence as $\e \downarrow 0$}
\label{section-e-conv}

In this section, we identify a random variable $v_n(t,x)$ such that $v_n^{\e}(t,x)\to v_n(t,x)$ in $L^p(\Omega)$, as $\e \downarrow 0$, uniformly in $(t,x) \in [0,T] \times \bR^d$, for any $p\geq 2$.

\medskip

The next result shows that the multiple Stratonovich integral of the kernel $f_n(\cdot,x;t)$ exists.
 For any ${\bf t}=(t_1,\ldots,t_n)\in T_n(t)$ and ${\bf x}=(x_1,\ldots,x_n) \in (\bR^d)^n$, we define:
\begin{equation}
\label{def-h}
h_{{\bf t},(x_j)_{j\in J^c}}^{(t,x)}\big((x_j)_{j \in J}\big):=\prod_{j=1}^{n} G(t_{j+1}-t_j,x_{j+1}-x_j),
\end{equation}
with $t_{n+1}=t$ and $x_{n+1}=x$.

\begin{theorem}
\label{th-conv-e}
If Assumption A and condition (C) hold, then for any $n\geq 1$, $p\geq 2$ and $T>0$,
\begin{equation}
\label{case:n-arb}
\sup_{(t,x) \in [0,T] \times \bR^d} \bE|H_{n}^{\e}(t,x)-H_{n}(t,x)|^p \to 0 \quad \mbox{as $\e \downarrow 0$,}
\end{equation}
where
\begin{equation}
\label{def-Hn}
H_n(t,x)=I_n^{\circ}\big(f_n(\cdot,x;t) \big)=\sum_{k=0}^{\lfloor n/2 \rfloor} \sum_{\substack{J \subset [n]\\ |J|=n-2k}} \sum_{\substack{\{I_1,\ldots,I_k\} \ {\rm partition} \ {\rm of} \ J^c \\ I_i=\{\ell_i,m_i\} \forall i=1,\ldots,k}} H_{n,k,J,I_1,\ldots,I_k}(t,x)
\end{equation}
and the variable
\begin{align*}
H_{n,k,J,I_1,\ldots,I_k}(t,x)&:=\int_{T_n(t)} \int_{(\bR^d)^{2k}} \left(\int_{(\bR^d)^{n-2k}} \prod_{j=1}^{n}G(t_{j+1}-t_j,x_{j+1}-x_j) \prod_{j\in J}W(dx_j)\right) \\
& \quad \quad \quad \prod_{i=1}^{k}\gamma(x_{\ell_i}-x_{m_i}) d\big((x_j)_{j\in J^c}\big) dt_1 \ldots dt_n
\end{align*}
is well-defined. Here we let $t_{n+1}=t$ and $x_{n+1}=x$.

Consequently, if $v_n(t,x)=1+\sum_{k=1}^{n}H_k(t,x)$, then for any $n \geq 1$, $p\geq 2$ and $T>0$,
\[
\sup_{(t,x) \in [0,T] \times \bR^d} \bE|v_{n}^{\e}(t,x)-v_{n}(t,x)|^p \to 0 \quad \mbox{as $\e \downarrow 0$}.
\]
\end{theorem}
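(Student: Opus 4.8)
The plan is to recognise $H_n(t,x)$ as the multiple Stratonovich integral $I_n^{\circ}(f_n(\cdot,x;t))$ of the chain kernel $f_n$ from \eqref{def-fn}, to expand the $n$-th Picard term $H_n^{\e}(t,x)$ into a \emph{finite} sum of multiple Wiener integrals by means of the product formula \eqref{W-product-n}, and then to prove the convergence of each summand separately, using an $L^2(\Omega)$-estimate on a fixed Wiener chaos together with hypercontractivity. First, by Fubini,
\[
H_n^{\e}(t,x)=\int_{(\bR^d)^n}f_n(x_1,\ldots,x_n,x;t)\prod_{i=1}^{n}\dot W^{\e}(x_i)\,dx_1 \ldots dx_n ,
\]
so $H_n^{\e}(t,x)$ is exactly the $\e$-approximation appearing in the definition of $I_n^{\circ}(f_n(\cdot,x;t))$. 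Substituting \eqref{W-product-n} for $\prod_i\dot W^{\e}(x_i)$, carrying out the deterministic ${\bf t}$-integration over $T_n(t)$ and the deterministic $(x_j)_{j\in J^c}$-integration against the factors $p_{2\e}*\gamma=\cF\mu_{\e}$, and pulling the remaining $(x_j)_{j\in J}$-integration inside $I_{n-2k}$ (a Fubini-type interchange, legitimate for fixed $\e>0$), one gets
\[
H_n^{\e}(t,x)=\sum_{k=0}^{\lfloor n/2\rfloor}\ \sum_{\substack{J\subset[n]\\ |J|=n-2k}}\ \sum_{\substack{\{I_1,\ldots,I_k\}\ {\rm partition}\ {\rm of}\ J^c\\ I_i=\{\ell_i,m_i\}\ \forall i}}I_{n-2k}\big(f^{\e}_{k,J,I_1,\ldots,I_k}\big),
\]
where $f^{\e}_{k,J,I_1,\ldots,I_k}$ is built from $f=f_n(\cdot,x;t)$ exactly as in Section \ref{section-multiple-Str}; the candidate limit $H_n(t,x)$ in \eqref{def-Hn} is the analogous sum of the $I_{n-2k}(f_{k,J,I_1,\ldots,I_k})$.

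Because the sum is finite, \eqref{case:n-arb} will follow once we establish, for each fixed choice of $(k,J,\{I_i\})$ and each $p\ge 2$, that $\sup_{(t,x)\in[0,T]\times\bR^d}\bE|I_{n-2k}(f^{\e}_{k,J,\ldots})-I_{n-2k}(f_{k,J,\ldots})|^p\to 0$ as $\e\downarrow 0$. Using the isometry $\bE|I_m(g)|^2=m!\,\|\widetilde g\|_{\cH^{\otimes m}}^2\le m!\,\|g\|_{\cH^{\otimes m}}^2$ together with hypercontractivity on the fixed chaos $\cH_{n-2k}$ (equivalence of all $L^p(\Omega)$-norms, $p\ge 2$), this reduces to two statements in $\cH^{\otimes(n-2k)}$: \emph{(a)} $\sup_{(t,x)\in[0,T]\times\bR^d}\|f_{k,J,\ldots}(\cdot,x;t)\|_{\cH^{\otimes(n-2k)}}<\infty$, which in particular shows that $H_{n,k,J,I_1,\ldots,I_k}(t,x)$ is well defined; and \emph{(b)} $\sup_{(t,x)\in[0,T]\times\bR^d}\|f^{\e}_{k,J,\ldots}(\cdot,x;t)-f_{k,J,\ldots}(\cdot,x;t)\|_{\cH^{\otimes(n-2k)}}\to 0$ as $\e\downarrow 0$.

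Both \emph{(a)} and \emph{(b)} are proved by passing to the Fourier domain. Using $\cF G(s,\cdot)(\xi)=\sin(s|\xi|)/|\xi|$, Plancherel in the integrated variables $(x_j)_{j\in J^c}$ against $\gamma=\cF\mu$ (resp.\ $p_{2\e}*\gamma=\cF\mu_{\e}$), and $\gamma=\cF\mu$ for the pairings $I_i$, one rewrites $\|f_{k,J,\ldots}(\cdot,x;t)\|_{\cH^{\otimes(n-2k)}}^2$ as an integral over the double simplex $T_n(t)\times T_n(t)$ and over $n-2k$ Fourier variables carrying $\mu^{\otimes(n-2k)}$ plus $k$ further Fourier variables carrying $\mu^{\otimes k}$ (one per pair), the integrand being a product of $|\cF G|$-type factors times a modulus-one phase in which the whole $x$-dependence is confined. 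Bounding each $|\cF G(s,\cdot)(\xi)|$ by $C_s(1+|\xi|^2)^{-1/2}$ through \eqref{FG-bound} and carrying out the time integrations over $T_n(t)\times T_n(t)$ — which contribute only a finite constant since $t\le T$ — the spatial part collapses to a finite power of $K_\mu=\int_{\bR^d}(1+|\xi|^2)^{-1/2}\mu(d\xi)$. This is precisely the point where condition (C) of \eqref{C-cond} is needed, rather than Dalang's condition (D): each contraction $\gamma(x_{\ell_i}-x_{m_i})$ is screened by only two square-root factors $(1+|\xi|^2)^{-1/2}$, so finiteness requires $K_\mu<\infty$. For \emph{(b)}, the same computation applied to $f^{\e}-f$ merely inserts the Gaussian damping factors coming from the $p_{\e}$'s and replaces $\mu$ by $\mu_{\e}$ in the paired variables; all these factors are bounded by $1$ and converge (pointwise, resp.\ $\mu_{\e}\uparrow\mu$) to their $\e=0$ counterparts, so the resulting integrand tends to $0$ and is dominated by the $\e$-free bound from \emph{(a)}, and dominated convergence yields $\|f^{\e}-f\|_{\cH^{\otimes(n-2k)}}\to 0$ uniformly over $[0,T]\times\bR^d$. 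This proves \eqref{case:n-arb}; the final assertion is then immediate, since $v_n^{\e}-v_n=\sum_{k=1}^{n}(H_k^{\e}-H_k)$ is a finite sum and Minkowski's inequality in $L^p(\Omega)$ transfers the convergence.

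I expect the main obstacle to be part \emph{(a)}: verifying that the iterated $\gamma$-contraction of the chain kernel $f_n$ still lies in $\cH^{\otimes(n-2k)}$ with norm bounded uniformly over $(t,x)\in[0,T]\times\bR^d$, using only \eqref{C-cond}. This requires careful bookkeeping of which Fourier variable inherits which $(1+|\xi|^2)^{-1/2}$ weight in the chain once the pairs $\{I_i\}$ have been contracted, followed by an explicit estimate of the ensuing integral over $T_n(t)\times T_n(t)$; once that uniform bound is in hand, \emph{(b)} is a routine dominated-convergence argument and the chaos decomposition and reassembly in the first two paragraphs are soft.
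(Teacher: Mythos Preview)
Your overall strategy matches the paper's exactly: expand $H_n^{\e}(t,x)$ via the product formula \eqref{W-product-n}, write each summand as $I_{n-2k}$ of an explicit kernel, and reduce \eqref{case:n-arb} to $\cH^{\otimes(n-2k)}$-norm estimates plus hypercontractivity; the paper also splits the difference $A-A^{\e}$ into two pieces (mollification of the surviving variables versus replacement of $\gamma$ by $p_{2\e}*\gamma$ in the contracted ones), but this is only a convenience for dominated convergence.

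Where your sketch has a genuine gap is the execution of part \emph{(a)}. After Fourier transform the chain kernel produces $\prod_{j=1}^{n}\cF G(t_{j+1}-t_j,\cdot)(\xi_1+\cdots+\xi_j)$ under the constraint $\xi_{\ell_i}=-\xi_{m_i}=\eta_i$; the arguments are \emph{cumulative sums}, so your plan to ``bound each $|\cF G(s,\cdot)(\xi)|$ by $C_s(1+|\xi|^2)^{-1/2}$'' does not directly yield a power of $K_\mu$ --- the variables are entangled, and in particular at $j=m_i$ the cumulative sum \emph{loses} $\eta_i$ rather than acquiring a fresh variable. The paper's resolution is to bound the factors at $j=m_i$ trivially by $t_{m_i+1}-t_{m_i}$ (see \eqref{FG-bound-m}) and discard those rows; once this is done, every remaining row (for $j\in J$ or $j=\ell_i$) introduces exactly one new variable, and after a H\"older step against the finite measure built from the $\ell_i$-factors one can integrate variable by variable via Lemma~4.1 of \cite{balan-song}. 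This asymmetric treatment of $\ell_i$ versus $m_i$ is what makes condition \eqref{C-cond} the correct hypothesis: each $\eta_i$ is screened by a \emph{single} $(1+|\cdot|^2)^{-1/2}$ weight (from the $\ell_i$ position only), which is also why your sentence ``each contraction $\ldots$ is screened by only two square-root factors $\ldots$ so finiteness requires $K_\mu<\infty$'' is not quite right --- two such factors would give Dalang's condition, not \eqref{C-cond}. You correctly flagged this bookkeeping as the main obstacle; the missing idea is precisely this $\ell_i$/$m_i$ dichotomy.
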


\begin{proof} {\em Step 1.} We consider first the case $n=1$. By the stochastic Fubini's theorem,
\begin{align*}
H_1^{\e}(t,x)&=\int_0^t \int_{\bR^d}G(t-t_1,x-x_1)\dot{W}^{\e}(x_1)dx_1 dt_1\\
&=
\int_{\bR^d}\left(\int_0^t \int_{\bR^d} G(t-t_1,x-x_1) p_{\e}(x_1-y_1)dx_1 dt_1 \right) W(dy_1).
\end{align*}
Letting
\[
H_1(t,x)=\int_0^t \int_{\bR^d}G(t-t_1,x-x_1)W(dx_1)dt_1,
\]
it is not difficult to see that for any $p\geq 2$ and $T>0$,
\begin{equation}
\label{case:n=1}
\sup_{(t,x) \in [0,T] \times \bR^d} \bE|H_{1}^{\e}(t,x)-H_1(t,x)|^p \to 0 \quad \mbox{as $\e \downarrow 0$.}
\end{equation}

{\em Step 2.} We consider now the case $n\geq 2$.
In definition \eqref{def-Hne} of
$H_n^{\e}(t,x)$, we use \eqref{W-product-n} for computing the product of the $n$ Wiener integrals. We obtain the representation:
\begin{equation}
\label{decomp-Hne}
H_n^{\e}(t,x)=\sum_{k=0}^{\lfloor n/2 \rfloor} \sum_{\substack{J \subset [n]\\ |J|=n-2k}} \sum_{\substack{\{I_1,\ldots,I_k\} \ {\rm partition} \ {\rm of} \ J^c \\ I_i=\{\ell_i,m_i\} \forall i=1,\ldots,k}} H_{n,k,J,I_1,\ldots,I_k}^{\e}(t,x),
\end{equation}
where
\begin{align*}
H_{n,k,J,I_1,\ldots,I_k}^{\e}(t,x)&=\int_{T_n(t)} \int_{(\bR^d)^{n}} \prod_{j=1}^{n}G(t_{j+1}-t_j,x_{j+1}-x_j)
I_{n-2k} \big(\bigotimes_{j \in J}p_{\e}(x_j-\cdot) \big)\\
& \quad \quad \quad \prod_{i=1}^{k}(p_{2\e}*\gamma)(x_{\ell_i}-x_{m_i})
d{\bf x} d{\bf t}
\end{align*}
with ${\bf t}=(t_1,\ldots,t_n)$ and ${\bf x}=(x_1,\ldots,x_n)$. Using stochastic Fubini's theorem, we see that
\begin{align*}
H_{n,k,J,I_1,\ldots,I_k}^{\e}(t,x)&=I_{n-2k}\big( A_{n,k,J,I_1,\ldots,I_k}^{\e}(\cdot,t,x)\big) \\ H_{n,k,J,I_1,\ldots,I_k}(t,x)&=I_{n-2k}\big( A_{n,k,J,I_1,\ldots,I_k}(\cdot,t,x)\big),
\end{align*}
where
\begin{align*}
A_{n,k,J,I_1,\ldots,I_k}^{\e}\big((y_j)_{j\in J},t,x\big)&=\int_{T_n(t)} \int_{(\bR^d)^n} \prod_{j=1}^{n}G(t_{j+1}-t_j,x_{j+1}-x_j) \prod_{j\in J}p_{\e}(x_j-y_j)\\
& \quad \quad \quad \prod_{i=1}^{k}
(p_{2\e}*\gamma)(x_{\ell_i}-x_{m_i}) d{\bf x}d{\bf t}\\
A_{n,k,J,I_1,\ldots,I_k}\big((x_j)_{j\in J},t,x\big)&=\int_{T_n(t)} \int_{(\bR^d)^{2k}} \prod_{j=1}^{n}G(t_{j+1}-t_j,x_{j+1}-x_j)  \prod_{i=1}^{k}
\gamma(x_{\ell_i}-x_{m_i}) d\big( (x_j)_{j \in J^c} \big)d{\bf t}.
\end{align*}

We will prove that
\begin{equation}
\label{Ane-An}
\sup_{(t,x) \in [0,T] \times \bR^d}\|A_{n,k,J,I_1,\ldots,I_k}^{\e}(\cdot,t,x)-A_{n,k,J,I_1,\ldots,I_k}
(\cdot,t,x)\|_{\cH^{\otimes (n-2k)}}^2 \to 0 \quad \mbox{as $\e \downarrow 0$}.
\end{equation}
Relation  \eqref{case:n-arb} will follow, since by Minkowksi's inequality and hypercontractivity,
\begin{align*}
& \|H_n^{\e}(t,x)-H_n(t,x)\|_p \leq \sum_{k=0}^{\lfloor n/2 \rfloor} \sum_{\substack{J \subset [n]\\ |J|=n-2k}} \sum_{\substack{\{I_1,\ldots,I_k\} \ {\rm partition} \ {\rm of} \ J^c \\ I_i=\{\ell_i,m_i\} \forall i=1,\ldots,k}} (p-1)^{\frac{n-2k}{2}} \Big((n-2k)!\Big)^{1/2}\\
& \qquad \qquad \qquad \qquad \qquad \qquad
\|A_{n,k,J,I_1,\ldots,I_k}^{\e}(\cdot,t,x)-A_{n,k,J,I_1,\ldots,I_k}
(\cdot,t,x)\|_{\cH^{\otimes (n-2k)}},
\end{align*}
where $\| \cdot\|_p$ is the norm in $L^p(\Omega)$.

{\em Step 3.} We prove that $H_{n,k,J,I_1,\ldots,I_k}(t,x)$
is well-defined, i.e. $A_{n,k,J,I_1,\ldots,I_k}(\cdot,t,x) \in \cH^{\otimes (n-2k)}$.

{\em Step 3.(a)} We first prove that $A_{n,k,J,I_1,\ldots,I_k}(\cdot,t,x) \in L^1((\bR^d)^{n-2k})$, i.e.
\[
{\cal I}:=\int_{(\bR^d)^{n}} \prod_{j=1}^{n}G(t_{j+1}-t_j,x_{j+1}-x_j)
\prod_{i=1}^{k} \gamma(x_{\ell_i}-x_{m_i})d{\bf x}d{\bf t}<\infty.
\]
We denote by $j_1<\ldots<j_{2k}$ the elements of $J^c=\cup_{i=1}^{k}I_i$.
By applying Lemma \ref{semigroup} successively, we can skip the indices  $j\in J$ when estimating ${\cal I}$, so that ${\cal I} \leq (C_{t}^{(1)})^{n-2k}  {\cal I}'$, where
\begin{align*}
 {\cal I}'&= \int_{0<t_{j_1}<\ldots<t_{j_{2k}}<t} \int_{(\bR^d)^{2k}} \prod_{s=1}^{2k} G(t_{j_{s+1}}-t_{j_s}, x_{j_{s+1}}-x_{j_s}) \prod_{i=1}^{k} \gamma(x_{\ell_i}-x_{m_i}) d\big((x_j)_{j\in J^c}\big) d\big((t_j)_{j\in J^c}\big) \\
&=\int_{0<t_{j_1}<\ldots<t_{j_{2k}}<t} \int_{(\bR^d)^{k}} 
\prod_{s=1}^{2k} \cF G(t_{j_{s+1}}-t_{j_s},\cdot)(\xi_{j_1}+\ldots+\xi_{j_s})
\prod_{i=1}^{k} 1_{\{\xi_{\ell_i}=-\xi_{m_i}=\eta_i\}} \prod_{i=1}^{k}\mu(d\eta_i) d\big((t_j)_{j\in J^c}\big),
\end{align*}
where we used Lemma \ref{lemmaF} for the second line.
Note that if $\xi_{\ell_i}=-\xi_{m_i}=\eta_i$ for all $i=1,\ldots,k$, then $\xi_{j_1}+\ldots+\xi_{j_{2k}}=0$.

Suppose that
$\ell_i<m_i$ for all $i=1,\ldots,k$ and $\ell_1<\ldots<\ell_k$.
Say $\ell_i=j_{s_i}$ and $m_i=j_{r_i}$ for all $i=1,\ldots,k$ with $ s_1<\ldots<s_k$. Then $\{s_1,\ldots,s_k\} \cup \{r_1,\ldots,r_k\}=\{1,\ldots,2k\}$. We assume that $\xi_{\ell_i}=-\xi_{m_i}=\eta_i$ for all $i=1,\ldots,k$. Pick $s \in \{1,\ldots,2k\}$. If $s=s_i$ for some $i=1,\ldots,k$ then $\xi_{j_s}=\xi_{\ell_i}=\eta_i$ and we use inequality \eqref{FG-bound}:
\[
|\cF G(t_{j_{s+1}}-t_{j_s},\cdot)(\xi_{j_1}+\ldots+\xi_{j_s}) |\leq C_{t} \left( \frac{1}{1+|\xi_{j_1}+\ldots+\xi_{j_s}|^2}\right)^{1/2}
\]
If $s=r_i$ for some $i=1,\ldots,k$ then $\xi_{j_s}=\xi_{m_i}=-\eta_i$ and we use the inequality
\[
|\cF G(t_{j_{s+1}}-t_{j_s},\cdot)(\xi_{j_1}+\ldots+\xi_{j_s}) |\leq
t_{j_{s+1}}-t_{j_s} \leq t.
\]
Applying Lemma 4.1 of \cite{balan-song}, we obtain:
\begin{align*}
{\cal I}' \leq (tC_t)^k
 \frac{t^{2k}}{(2k)!}\prod_{i=1}^{k} \left( \sup_{z \in \bR^d} \int_{\bR^d} \left( \frac{1}{1+|z+\eta_i|^2}\right)^{1/2} \mu(d\eta_i)\right)=C_t^k
 \frac{t^{3k}}{(2k)!} K_{\mu}^{k}<\infty.
 \end{align*}

{\em Step 3.(b)} We compute the Fourier transform of $A_{n,k,J,I_1,\ldots,I_k}(\cdot,t,x)$.  Recall definition \eqref{def-h} of $h_{{\bf t},(x_j)_{j \in J^c}}$. By Fubini's theorem,
\begin{align*}
& \cF A_{n,k,J,I_1,\ldots,I_k}(\cdot,t,x)\big( (\xi_j)_{j\in J}\big)=\int_{(\bR^d)^{n-2k}} e^{-i \sum_{j\in J} \xi_j \cdot x_j}  A_{n,k,J,I_1,\ldots,I_k}(\cdot,t,x) \big( (x_j)_{j\in J},t,x \big) d\big( (x_j)_{j \in J} \big)\\
&= \quad \int_{(\bR^d)^{n}} e^{-i \sum_{j\in J} \xi_j \cdot x_j}  \int_{T_n(t)} h_{{\bf t},(x_j)_{j \in J^c}}\big((x_j)_{j \in J}\big)
\prod_{i=1}^{k} \gamma(x_{\ell_i}-x_{m_i}) d{\bf t}d{\bf x}\\
&=
\int_{T_n(t)} \int_{(\bR^d)^{2k}}\cF h_{{\bf t},(x_j)_{j \in J^c}}\big( (\xi_j)_{j\in J} \big)
 \prod_{i=1}^{k} \gamma(x_{\ell_i}-x_{m_i}) d\big( (x_j)_{j \in J^c} \big)d{\bf t}.
 \end{align*}
We apply Lemma \ref{lemmaF} to the function $\varphi_{{\bf t},(\xi_j)(j \in J)}((x_j)_{j\in J^c}):=\cF h_{{\bf t},(x_j)_{j \in J^c}}\big( (\xi_j)_{j\in J} \big)$. Then,
\begin{align*}
\cF A_{n,k,J,I_1,\ldots,I_k}(\cdot,t,x)\big( (\xi_j)_{j\in J}\big)&=\int_{T_{n}(t)} \int_{(\bR^d)^{2k}} \varphi_{{\bf t},(\xi_j)(j \in J)}((x_j)_{j\in J^c}) \prod_{i=1}^{k} \gamma(x_{\ell_i}-x_{m_i}) d\big( (x_j)_{j \in J^c} \big)d{\bf t}\\
&=\int_{(\bR^d)^k} \cF \varphi_{{\bf t},(\xi_j)(j \in J)}((\xi_j)_{j\in J^c}) \prod_{i=1}^{k}1_{\{\xi_{\ell_i}=-\xi_{m_i}=\eta_i\}} \prod_{i=1}^{k}\mu(d\eta_i)d{\bf t}.
\end{align*}
Note that
\begin{align*}
\cF \varphi_{{\bf t},(\xi_j)(j \in J)}((\xi_j)_{j\in J^c})&=\int_{(\bR^d)^n} e^{-i \sum_{j=1}^{n} \xi_j \cdot x_j} \prod_{j=1}^{n}G(t_{j+1}-t_j,x_{j+1}-x_j) d{\bf x}\\
&=e^{-i(\sum_{j=1}^n \xi_j) \cdot x} \prod_{j=1}^{n}\cF G(t_{j+1}-t_j,\cdot)(\xi_1+\ldots+\xi_j)
\end{align*}
and
$\sum_{j=1}^n \xi_j=\sum_{j\in J} \xi_j$ if $\xi_{\ell_i}=-\xi_{m_i}=\eta_i$ for all $i=1,\ldots,k$. Hence,
\begin{align}
\nonumber
\cF A_{n,k,J,I_1,\ldots,I_k}(\cdot,t,x)\big( (\xi_j)_{j\in J}\big)&=
e^{-i(\sum_{j\in J} \xi_j) \cdot x}\int_{T_n(t)} \int_{(\bR^d)^k}
\prod_{j=1}^{n}\cF G(t_{j+1}-t_j,\cdot)(\xi_1+\ldots+\xi_j)\\
\label{Fourier-A}
& \quad \quad \quad \prod_{i=1}^{k}1_{\{\xi_{\ell_i}=-\xi_{m_i}=\eta_i\}} \prod_{i=1}^{k}\mu(d\eta_i)d{\bf t}.
\end{align}

{\em Step 3.(c)} We show that the $\cH^{\otimes (n-2k)}$-norm of $A_{n,k,J,I_1,\ldots,I_k}(\cdot,t,x)$ is finite. This argument plays an important role in this paper. By definition,
\[
\|A_{n,k,J,I_1,\ldots,I_k}(\cdot,t,x) \|_{\cH^{\otimes (n-2k)}}^2=\int_{(\bR^d)^{n-2k}}|\cF A_{n,k,J,I_1,\ldots,I_k}(\cdot,t,x)\big( (\xi_j)_{j\in J}\big)|^2 \prod_{j\in J}\mu(d\xi_j).
\]
In relation \eqref{Fourier-A}, we split the product over $j$ into 3 products: for $j \in J$, $j\in \{m_1,\ldots,m_k\}$ and $j \in \{\ell_1,\ldots,\ell_k\}$. For the second product, we use the inequality
\begin{equation}
\label{FG-bound-m}
\prod_{i=1}^{k}|\cF G(t_{m_i+1}-t_{m_i},\cdot)(\xi_1+\ldots+\xi_{m_i})| \leq \prod_{i=1}^{k}(t_{m_i+1}-t_{m_i}).
\end{equation}
Hence,
\begin{align*}
& \|A_{n,k,J,I_1,\ldots,I_k}(\cdot,t,x) \|_{\cH^{\otimes (n-2k)}}^2 \leq \int_{(\bR^d)^{n-2k}} \left( \int_{T_n(t)} \int_{(\bR^d)^k} \prod_{j\in J}|\cF G(t_{j+1}-t_j)(\xi_1+\ldots+\xi_j)| \right. \\
& \left. \prod_{i=1}^{k}(t_{m_i+1}-t_{m_i})  \prod_{i=1}^{k}|\cF G(t_{\ell_i+1}-t_{\ell_i},\cdot)(\xi_1+\ldots+\xi_{\ell_i})| \prod_{i=1}^{k}1_{\{\xi_{\ell_i}=-\xi_{m_i}=\eta_i\}} \prod_{i=1}^{k}\mu(d\eta_i)d{\bf t}\right)^2 \prod_{j\in J}\mu(d\xi_j).
\end{align*}

For each $(\xi_j)_{j \in J}$ fixed, we will apply H\"older's inequality to the measure
\[
\nu_{(\xi_j)_{j \in J}}\big( d{\bf t}, d((\eta_i)_{i=1,\ldots,k})\big) :=
\prod_{i=1}^{k}|\cF G(t_{\ell_i+1}-t_{\ell_i},\cdot)(\xi_1+\ldots+\xi_{\ell_i})| \prod_{i=1}^{k}1_{\{\xi_{\ell_i}=-\xi_{m_i}=\eta_i\}} \prod_{i=1}^{k}\mu(d\eta_i)d{\bf t}.
\]
For this, we need to show that $\nu_{(\xi_j)_{j \in J}}$ is a finite measure on $T_n(t) \times (\bR^d)^k$. By \eqref{FG-bound},
\begin{align*}
& \nu_{(\xi_j)_{j \in J}} \big(T_n(t) \times (\bR^d)^k \big) \leq \int_{T_n(t)} \prod_{i=1}^{k} \left( \sup_{z \in \bR^d} \int_{\bR^d} |\cF G(t_{\ell_i+1}-t_{\ell_i},\cdot)(z+\eta_i)|\mu(d\eta_i)\right) d{\bf t}\\
& \quad \quad \quad  \leq  \frac{t^n}{n!}  \left(C_{t}\sup_{z \in \bR^d} \int_{\bR^d} \left(\frac{1}{1+|z+\eta|^2} \right)^{1/2}\mu(d\eta)\right)^k = \frac{t^n}{n!} C_t^k K_{\mu}^k.
\end{align*}
Hence,
\begin{align*}
& \|A_{n,k,J,I_1,\ldots,I_k}(\cdot,t,x) \|_{\cH^{\otimes (n-2k)}}^2 \leq \frac{t^n}{n!} C_t^k K_{\mu}^k \int_{T_n(t)} \prod_{i=1}^{k}(t_{m_i+1}-t_{m_i})^2  \\
 & \quad \int_{(\bR^d)^{n-2k}}  \int_{(\bR^d)^k} \prod_{j\in J}|\cF G(t_{j+1}-t_j)(\xi_1+\ldots+\xi_j)|^2 \\
 & \quad \prod_{i=1}^{k}|\cF G(t_{\ell_i+1}-t_{\ell_i},\cdot)(\xi_{1}+\ldots+\xi_{\ell_i})|
  \prod_{i=1}^{k}1_{\{\xi_{\ell_i}=-\xi_{m_i}=\eta_i\}} \prod_{i=1}^{k}\mu(d\eta_i) \prod_{j\in J}\mu(d\xi_j)d{\bf t}.
\end{align*}

We have arrived now at the most delicate part of the argument, which requires a careful analysis of the positions of the indices $\ell_i$ and $m_i$. Recall that $\ell_i<m_i$ for all $i=1,\ldots,k$ and $\ell_1<\ldots \ell_k$.
We calculate the sums $\xi_1+\ldots+\xi_j$ successively one after the other and we arrange them in a row-wise manner, one per row. When we encounter an index $j \not \in \{\ell_1,\ldots,\ell_k\} \cup \{m_1,\ldots, m_k\}$, we add $\xi_j$ to the previous sum. When we encounter $j=\ell_i$ for some $i\in \{1,\ldots,k\}$, we add $\eta_i$. When we encounter $j=m_i$ for some $i\in \{1,\ldots,k\}$, we subtract $\eta_i$ and delete this row.
Here is an illustration for $n=11$, $k=4$, $J=\{1,4,6\}$, $I_1=\{2,8\}$, $I_2=\{3,5\}$, $I_3=\{7,10\}$, $I_4=\{9,11\}$:
\begin{center}
\begin{tabular}{l|l|llllllllllll}
          & & $\xi_1$ & & & & & & & & & & &\\
& $\ell_1=2$ & $\xi_1$ & $+\eta_1$ & & & & & & & & & &\\
& $\ell_2=3$ & $\xi_1$ & $+\eta_1$ & $+\eta_2$  & & & & & & & & &\\
& & $\xi_1$ & $+\eta_1$ & $+\eta_2$ & $+\xi_4$ & & & & & & &\\
delete  & $m_2=5$ & $\xi_1$ & $+\eta_1$ & $+\eta_2$ & $+\xi_4$ & $-\eta_2$ & & & & & &\\
& & $\xi_1$ & $+\eta_1$ & & $+\xi_4$ & & $+\xi_6$ & & & & & &\\
& $\ell_3=7$  & $\xi_1$ & $+\eta_1$ & & $+\xi_4$ & & $+\xi_6$ & $+\eta_3$ & & & & &\\
delete  & $m_1=8$  & $\xi_1$ & $+\eta_1$ & & $+\xi_4$ & & $+\xi_6$ & $+\eta_3$ & & $-\eta_1$ & & \\
& $\ell_4=9$  & $\xi_1$ &  & & $+\xi_4$ & & $+\xi_6$ & $+\eta_3$ & & & $+\eta_4$ & & \\
delete  & $m_3=10$  & $\xi_1$ &  & & $+\xi_4$ & & $+\xi_6$ & $+\eta_3$ & & & $+\eta_4$ & $-\eta_3$ & \\
delete  & $m_4=11$  & $\xi_1$ &  & & $+\xi_4$ & & $+\xi_6$ &  & & & $+\eta_4$ &  & $-\eta_4$
\end{tabular}
\end{center}
When we move from one row to the next one, we will always have {\em just one new term} added to the sum, and this term can be either some $\eta_i$ with $i\in \{1,\ldots,k\}$ which did not appear in the rows above, or some $\xi_j$ with $j \in J$ which did not appear in the rows above. The problematic rows corresponding to $j \in \{m_1,\ldots,m_k\}$ have been deleted, due to our use of inequality \eqref{FG-bound-m}.

We decompose $\{1,\ldots,\ell_i\}$ into the blocks
$B_s=\{j;\ell_{s-1}+1\leq j \leq \ell_s\}$ for $s=1,\ldots,k$, where
$\ell_0=0$. Then we write
\[
\sum_{j=1}^{\ell_i}\xi_j=\sum_{s=1}^{i} (\xi_{\ell_{s-1}+1} \ldots+\xi_{\ell_s-1}+\eta_s).
\]
The sum $\sum_{j=1}^{\ell_i}\xi_j$ will always contain the term $\eta_i$. (This sum may or may not contain $\eta_s$ for $s< i$,
depending on whether $m_s>\ell_i$ or $m_s<\ell_i$. If $m_s<\ell_i$, then  one of the blocks $B_{s+1},\ldots,B_i$ will contain $m_s$, and the term $-\eta_s$ added at the position corresponding to $m_s$ will cancel out with the term $\eta_s$ at the end of block $B_s$.)

Based on this procedure and using inequality \eqref{FG-bound} and Lemma 4.1 of \cite{balan-song}, we obtain: 
\begin{align}
\nonumber
& \|A_{n,k,J,I_1,\ldots,I_k}(\cdot,t,x) \|_{\cH^{\otimes (n-2k)}}^2 \leq
 \frac{t^n}{n!} K_{\mu}^{k} C_t^k  \int_{T_n(t)} d{\bf t} \prod_{i=1}^{k}(t_{m_i+1}-t_{m_i})^2 \\
\nonumber
& \qquad \qquad \qquad \qquad \qquad
 \prod_{j \in J} \left(\sup_{z \in \bR^d}\int_{\bR^d} |\cF G (t_{j+1}-t_j,\cdot)(z+\xi_j)|^2 \mu(d\xi_j) \right) \\
 \nonumber
 & \qquad \qquad \qquad \qquad \qquad 
 \prod_{i=1}^{k} \left( \sup_{z \in \bR^d} \int_{\bR^d} |\cF G(t_{\ell_i+1}-t_{\ell_i},\cdot)(z+\eta_i)|\mu(d\eta_i)\right) \\
\nonumber
 &\leq \frac{t^n}{n!} K_{\mu}^{k}C_t^k   \left(\int_{T_n(t)} \prod_{i=1}^{k}(t_{m_i+1}-t_{m_i})^2 d{\bf t}  \right) \left(C_{t}^2 \sup_{z \in \bR^d}\int_{\bR^d} \frac{1}{1+|z+\xi|^2}\mu(d\xi) \right)^{n-2k} \\
\nonumber
& \qquad \qquad \qquad \qquad \qquad 
 \left(C_{t} \sup_{z \in \bR^d} \int_{\bR^d} \left(\frac{1}{1+|z+\eta|^2}\right)^{1/2}\mu(d\eta)\right)^k \\
 \label{DCT-justif}
 & = \frac{t^n}{n!}K_{\mu}^n C_{t}^{2(n-k)}\frac{2^k t^{n+2k}}{(n+2k)!}.
\end{align}

\medskip

{\em Step 4.} We prove \eqref{Ane-An}. For this, we will use the decomposition:
\begin{equation}
\label{An-Ane-decomp}
A_{n,k,J,I_1,\ldots,I_k}(\cdot,t,x)-A_{n,k,J,I_1,\ldots,I_k}^{\e}(\cdot,t,x)=
T_{n,k,J,I_1,\ldots,I_k}^{\e}(\cdot,t,x)+T_{n,k,J,I_1,\ldots,I_k}'^{\e}
(\cdot,t,x),
\end{equation}
where
\begin{align}
\label{def-Te}
T_{n,k,J,I_1,\ldots,I_k}^{\e}\big((y_j)_{j\in J},t,x \big)&=\int_{T_n(t)} \int_{(\bR^d)^{2k}} \big(h_{{\bf t},(x_j)_{j \in J^c}}^{(t,x)}-h_{{\bf t},(x_j)_{j \in J^c}}^{(t,x)}* p_{\e}^{\otimes (n-2k)} \big)\big( (y_j)_{j\in J} \big)\\
\nonumber
& \quad \quad 
\prod_{i=1}^{k} (p_{2\e}* \gamma)(x_{\ell_i}-x_{m_i}) d\big( (x_j)_{j \in J^c} \big) d{\bf t} \\
\label{def-Te'}
T_{n,k,J,I_1,\ldots,I_k}'^{\e}\big((y_j)_{j\in J},t,x\big)&=\int_{T_n(t)} \int_{(\bR^d)^{2k}} h_{{\bf t},(x_j)_{j \in J^c}}^{(t,x)} \big( (y_j)_{j\in J} \big)\\
& \quad \quad 
\nonumber
\Big(\prod_{i=1}^{k} \gamma(x_{\ell_i}-x_{m_i})-
 \prod_{i=1}^{k} (p_{2\e}* \gamma)(x_{\ell_i}-x_{m_i}) \Big) d\big( (x_j)_{j \in J^c} \big) d{\bf t}.
\end{align}

We treat separately the two terms. Recall that $p_{2\e}* \gamma=\cF \mu_{\e}$ where $\mu_{\e}$ is given by \eqref{def-mu-e}.
Similarly to Step 3, it can be proved that the functions $T_{n,k,J,I_1,\ldots,I_k}^{\e}(\cdot,t,x)$ and $T_{n,k,J,I_1,\ldots,I_k}'^{\e}(\cdot,t,x)$ belong to $L^1((\bR^d)^{n-2k})$ and their Fourier transforms are given by:
\begin{align}
\nonumber
& \cF T_{n,k,J,I_1,\ldots,I_k}^{\e}(\cdot,t,x) \big( (\xi_j)_{j \in J}\big) = e^{-i (\sum_{j \in J} \xi_j) \cdot x} \big(1-e^{-\frac{\e}{2} \sum_{j\in J}|\xi_j|^2}\big) \\
\label{def-FTe}
& \quad \quad \quad \int_{T_n(t)} \int_{(\bR^d)^k} \prod_{j=1}^{n} \cF G(t_{j+1}-t_j,\cdot)\big(\sum_{s=1}^{j}\xi_s \big) \prod_{i=1}^{k} 1_{\{\xi_{\ell_i}=-\xi_{m_i}=\eta_i\}}  \mu_{\e}(d\eta_i) d{\bf t} \\
\nonumber
& \cF T_{n,k,J,I_1,\ldots,I_k}'^{\e}(\cdot,t,x) \big( (\xi_j)_{j \in J}\big) = e^{-i (\sum_{j \in J} \xi_j) \cdot x} \int_{T_n(t)} \int_{(\bR^d)^k} \prod_{j=1}^{n} \cF G(t_{j+1}-t_j,\cdot)\big(\sum_{s=1}^{j}\xi_s \big) \\
\label{def-F'Te}
& \quad \quad \quad  \prod_{i=1}^{k} 1_{\{\xi_{\ell_i}=-\xi_{m_i}=\eta_i\}} \Big(1-e^{-\e \sum_{i=1}^{k}|\eta_i|^2}\Big) \prod_{i=1}^{k} \mu(d\eta_i) d{\bf t}.
\end{align}
(In these calculations, we have applied Lemma \ref{lemmaF} to the function $p_{2\e}*\gamma=\cF \mu_{\e}$.)

By applying H\"older's inequality and \eqref{FG-bound} (as in Step 3), we obtain:
\begin{align}
\nonumber
& \|T_{n,k,J,I_1,\ldots,I_k}^{\e}(\cdot,t,x) \|_{\cH^{\otimes (n-2k)}}^2 \leq \frac{t^n}{n!} C_t^k K_{\mu}^k
\int_{T_n(t)} \prod_{i=1}^{k}(t_{m_i+1}-t_{m_i})^2  \\
\nonumber
& \quad \quad \int_{(\bR^d)^{n-2k}}  \big(1-e^{-\frac{\e}{2} \sum_{j\in J}|\xi_j|^2}\big)^2 \int_{(\bR^d)^k} \prod_{j\in J}|\cF G(t_{j+1}-t_j)(\sum_{s=1}^j \xi_s)|^2 \\
\nonumber
&   \quad \quad  \prod_{i=1}^{k}|\cF G(t_{\ell_i+1}-t_{\ell_i},\cdot)(\sum_{s=1}^{\ell_i}\xi_{s})| \prod_{i=1}^{k}1_{\{\xi_{\ell_i}=-\xi_{m_i}=\eta_i\}} \prod_{i=1}^{k}\mu(d\eta_i) \prod_{j\in J}\mu(d\xi_j)d{\bf t} \\
\nonumber
& \leq \frac{t^n}{n!} C_t^{2(n-k)} K_{\mu}^k   \int_{T_n(t)} \prod_{i=1}^{k}(t_{m_i+1}-t_{m_i})^2 \\
\nonumber
& \quad \quad  \int_{(\bR^d)^{n-2k}}  \big(1-e^{-\frac{\e}{2} \sum_{j\in J}|\xi_j|^2}\big)^2 \int_{(\bR^d)^k} \prod_{j\in J}
\frac{1}{1+|\sum_{s=1}^{j}\xi_s|^2}  \\
\label{norm-Te}
&  \quad \quad  \prod_{i=1}^{k} \left(\frac{1}{1+|\sum_{s=1}^{\ell_i}\xi_{s}|^2}\right)^{1/2} \prod_{i=1}^{k}1_{\{\xi_{\ell_i}=-\xi_{m_i}=\eta_i\}} \prod_{i=1}^{k}\mu(d\eta_i) \prod_{j\in J}\mu(d\xi_j)d{\bf t}
\end{align}
and
\begin{align*}
& \|T_{n,k,J,I_1,\ldots,I_k}'^{\e}(\cdot,t,x) \|_{\cH^{\otimes (n-2k)}}^2 \leq \frac{t^n}{n!} C_t^k K_{\mu}^k
\int_{T_n(t)} \prod_{i=1}^{k}(t_{m_i+1}-t_{m_i})^2  \\
& \quad \quad \int_{(\bR^d)^{n-2k}}  \int_{(\bR^d)^k} \prod_{j\in J}|\cF G(t_{j+1}-t_j)(\sum_{s=1}^j \xi_s)|^2 \prod_{i=1}^{k}|\cF G(t_{\ell_i+1}-t_{\ell_i},\cdot)(\sum_{s=1}^{\ell_i}\xi_{s})| \\
&   \quad \quad
\Big(1-e^{-\e \sum_{i=1}^{k}|\eta_i|^2}\Big)^2
\prod_{i=1}^{k}1_{\{\xi_{\ell_i}=-\xi_{m_i}=\eta_i\}} \prod_{i=1}^{k}\mu(d\eta_i) \prod_{j\in J}\mu(d\xi_j)d{\bf t} \\
& \leq \frac{t^n}{n!} C_t^{2(n-k)} K_{\mu}^k   \int_{T_n(t)} \prod_{i=1}^{k}(t_{m_i+1}-t_{m_i})^2 \\
& \quad \quad \int_{(\bR^d)^{n-2k}}   \int_{(\bR^d)^k} \prod_{j\in J}
\frac{1}{1+|\sum_{s=1}^{j}\xi_s|^2}  \prod_{i=1}^{k} \left(\frac{1}{1+|\sum_{s=1}^{\ell_i}\xi_{s}|^2}\right)^{1/2}  \\
&  \quad \quad
\Big(1-e^{-\e \sum_{i=1}^{k}|\eta_i|^2}\Big)^2\prod_{i=1}^{k}1_{\{\xi_{\ell_i}=-\xi_{m_i}=\eta_i\}} \prod_{i=1}^{k}\mu(d\eta_i) \prod_{j\in J}\mu(d\xi_j)d{\bf t}.
\end{align*}

By the dominated convergence theorem, both
$\|T_{n,k,J,I_1,\ldots,I_k}^{\e}(\cdot,t,x)\|_{\cH^{\otimes (n-2k)}}$ and \linebreak $\|T_{n,k,J,I_1,\ldots,I_k}'^{\e}(\cdot,t,x)\|_{\cH^{\otimes (n-2k)}}$ converge to 0 as $\e \downarrow 0$, uniformly in $(t,x) \in [0,T] \times \bR^d$. The application of this theorem is justified by \eqref{DCT-justif}. 

This proves \eqref{Ane-An} and concludes the proof of the theorem.
\end{proof}

\subsection{Convergence as $n \to \infty$}
\label{section-n-conv}

In this section, we examine the two vertical sides of the diagram above, involving the convergence in $n$.

\begin{theorem}
\label{th-n-conv}
If Assumption A and condition (C) hold, then for any $p\geq 2$ and $T>0$, the following limit
\[
v(t,x):=\lim_{n \to \infty} v_n(t,x) \quad \mbox{exists in $L^p(\Omega)$,}
\]
uniformly in $[0,T] \times \bR^d$, where $v_n(t,x)$ is defined in Theorem \ref{th-conv-e}.
\end{theorem}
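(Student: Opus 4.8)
The plan is to reduce the statement to the absolute convergence of the series $\sum_{n\ge1}H_n(t,x)$ in $L^p(\Omega)$, uniformly on $[0,T]\times\bR^d$. Since $v_n(t,x)=1+\sum_{k=1}^{n}H_k(t,x)$, once I show
\[
\sum_{n\ge1}\ \sup_{(t,x)\in[0,T]\times\bR^d}\|H_n(t,x)\|_{p}<\infty ,
\]
the sequence $(v_n)_n$ is Cauchy in $L^p(\Omega)$ uniformly in $(t,x)$, hence converges to a limit which I call $v(t,x)$, and we are done. So the whole argument is an estimate on $\|H_n(t,x)\|_p$.

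To obtain that estimate I would start from the decomposition \eqref{def-Hn}, which writes $H_n(t,x)$ as a sum of the variables $H_{n,k,J,I_1,\ldots,I_k}(t,x)=I_{n-2k}(A_{n,k,J,I_1,\ldots,I_k}(\cdot,t,x))$ over $k=0,\ldots,\lfloor n/2\rfloor$, over $J\subset[n]$ with $|J|=n-2k$, and over the pairings of $J^c$. Applying Minkowski's inequality and hypercontractivity exactly as in Step~2 of the proof of Theorem~\ref{th-conv-e}, and then the bound \eqref{DCT-justif} on $\|A_{n,k,J,I_1,\ldots,I_k}(\cdot,t,x)\|_{\cH^{\otimes(n-2k)}}^2$, each term is controlled by a quantity that does not depend on $J$, on the pairing, or on $x$, and depends on $t$ only monotonically through $t$ and $C_t$; so I may replace $t$ by $T$ afterwards. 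The number of terms with a given $k$ is
\[
\binom{n}{2k}\cdot\frac{(2k)!}{2^{k}k!}=\frac{n!}{(n-2k)!\,2^{k}k!},
\]
the binomial counting the choices of $J$ and the quantity $(2k)!/(2^{k}k!)=(2k-1)!!$ counting the perfect matchings of the $2k$-element set $J^c$.

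Collecting the factorials, the block of the sum corresponding to $k$ is then bounded by
\[
\frac{(n!)^{1/2}}{((n-2k)!)^{1/2}\,2^{k/2}k!\,((n+2k)!)^{1/2}}\,(p-1)^{(n-2k)/2}\,t^{\,n+k}\,K_{\mu}^{\,n/2}\,C_t^{\,n-k}.
\]
Using $(n+2k)!\ge n!$ and $n!/(n-2k)!\le n^{2k}$, the factorial prefactor is at most $n^{k}\,((n!)^{1/2}2^{k/2}k!)^{-1}$; pulling out the $k=0$ shape, the sum over $k$ is dominated by $\sum_{k\ge0}\frac1{k!}(\tfrac{nt}{\sqrt2(p-1)C_t})^{k}\le\exp(\tfrac{nt}{\sqrt2(p-1)C_t})$, and on $[0,T]$ this is at most $e^{n/4}$ because $p\ge2$ and $t/C_t\le 1/(2\sqrt2)$. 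Altogether,
\[
\sup_{(t,x)\in[0,T]\times\bR^d}\|H_n(t,x)\|_p\le\frac{a^{\,n}}{(n!)^{1/2}},\qquad a:=e^{1/4}(p-1)^{1/2}T\,K_{\mu}^{1/2}C_T,
\]
which is a convergent series in $n$ by the ratio test; this yields the claim.

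The only genuine difficulty is combinatorial: the number of terms $n!/((n-2k)!\,2^{k}k!)$ grows faster than exponentially in $n$ — it is of order $(n!)^{1/2}$ near $k\sim n/2$ — so the decay $a^n/(n!)^{1/2}$ can be recovered only by using the full strength of \eqref{DCT-justif}, in particular its denominator $(n+2k)!$, which together with the $1/n!$ already present absorbs both the count of terms and the factor $((n-2k)!)^{1/2}$ produced by hypercontractivity. Once this bookkeeping is set up, the residual sum over $k$ is an innocuous exponential, and uniformity in $(t,x)$ is automatic since none of the bounds involve $x$ and the $t$-dependence enters only through $t\le T$ and $C_t\le C_T$.
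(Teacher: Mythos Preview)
Your argument is correct and follows the same overall strategy as the paper: reduce to summability of $\sum_n\sup_{(t,x)}\|H_n(t,x)\|_p$, apply Minkowski and hypercontractivity to the decomposition \eqref{def-Hn}, plug in the bound \eqref{DCT-justif}, and count the $\binom{n}{2k}\cdot(2k)!/(2^k k!)$ terms.

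The only difference lies in the final factorial bookkeeping. The paper uses $(n+2k)!\ge(n+k)!$, recognises the multinomial coefficient $\binom{2n}{n-2k,\,k,\,n+k}\le 3^{2n}$, and then invokes Stirling to obtain decay of order $(n!)^{-1/4}$. Your route---using $(n+2k)!\ge n!$ and $n!/(n-2k)!\le n^{2k}$, then summing the residual $k$-series as an exponential controlled by $t/C_t\le 1/(2\sqrt2)$---is a bit more direct and actually yields the sharper decay $(n!)^{-1/2}$. Both are perfectly adequate for the theorem; your use of the explicit constant $C_t=2\sqrt2(t\vee1)$ to get $e^{n/4}$ is a pleasant touch but not essential, since any bound $t/C_t\le c$ still gives an innocuous factor $C^n$.
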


\begin{proof} We will prove that $\{v_n(t,x)\}_{n\geq 0}$ is a Cauchy sequence in $L^p(\Omega)$, uniformly in $(t,x) \in [0,T] \times \bR^d$. For this, it is enough to prove that
\begin{equation}
\label{sum-Hn}
\sum_{n\geq 0} \sup_{(t,x) \in [0,T] \times \bR^d} \|H_n(t,x)\|_p < \infty.
\end{equation}
Recall definition \eqref{def-Hn} of $H_n(t,x)$. By Minkowski's inequality and hypercontractivity,
\begin{align*}
& \|H_n(t,x)\|_p \leq \\
& \quad \sum_{k=0}^{\lfloor n/2 \rfloor} \sum_{\substack{J \subset [n]\\ |J|=n-2k}} \sum_{\substack{\{I_1,\ldots,I_k\} \ {\rm partition} \ {\rm of} \ J^c \\ I_i=\{\ell_i,m_i\} \forall i=1,\ldots,k}} (p-1)^{\frac{n-2k}{2}} \big((n-2k)!\big)^{1/2}
\|A_{n,k,J,I_1,\ldots,I_k}
(\cdot,t,x)\|_{\cH^{\otimes (n-2k)}}.
\end{align*}

We now use \eqref{DCT-justif}. Since there are $\binom{n}{2k}$ sets $J \subset [n]$ with $|J|=n-2k$, and for each set $J$ there are
$\frac{(2k)!}{2^k k!}$ (unordered)
partitions $\{I_1,\ldots,I_k\}$ of $J^c$ with $|I_i|=2$ for all $i=1,\ldots,k$,
\begin{align*}
\|H_n(t,x)\|_p & \leq \sum_{k=0}^{\lfloor n/2 \rfloor} \frac{n!}{(n-2k)! \, 2^k k!} (p-1)^{\frac{n-2k}{2}} \big((n-2k)!\big)^{1/2} \left[\frac{t^n}{n!} K_{\mu}^n C_t^{2(n-k)}
\frac{2^k t^{n+2k}}{(n+2k)!}\right]^{1/2} \\
& \leq C_{p,\mu,t}^{n/2} \sum_{k=0}^{\lfloor n/2 \rfloor} \frac{(2n)! }{(n-2k)! \, k! \, (n+k)! } \cdot \frac{n!}{(2n)!}\cdot \left(\frac{(n-2k)!}{n!}\right)^{1/2} \big((n+k)!\big)^{1/2} ,
\end{align*}
where $C_{p,\mu,t}=(p-1) K_{\mu}C_t^2 t (t \vee 1)^{3/2}$ and we used the fact that $(n+2k)! \geq (n+k)!$. By Stirling's formula, $(2n)! \geq c_1^{-n} (n!)^2$ and $(n+k)!=\Gamma(n+k+1)\leq \Gamma(\frac{3}{2}n+1) \leq c_2^n (n!)^{3/2}$, where $c_1>0$ and $c_2>0$ are positive constants. Since $(n-2k)! \leq n!$, we obtain:
\begin{align*}
\sup_{(t,x) \in [0,T] \times \bR^d}\|H_n(t,x)\|_p &
\leq C_{p,\mu,T}^{n/2} \sum_{k=0}^{\lfloor n/2 \rfloor} \binom{2n}{n-2k,k,n+k}\cdot \frac{c_1^{n}}{n!} \cdot c_2^{n/2}(n!)^{3/4}  \\
&\leq  C_{p,\mu,T}^{n/2}\, c_1^n c_2^{n/2} \frac{1}{(n!)^{1/4}} 3^{2n}.
\end{align*}
This proves \eqref{sum-Hn}.
\end{proof}

We now prove the uniform convergence in $\e$ which was mentioned in Section \ref{section-ve}.

\begin{theorem}
If Assumption A and condition (C) holds, then for any $p\geq 2$ and $T>0$,
\[
\lim_{n \to \infty} v_n^{\e}(t,x)=v^{\e}(t,x) \ \mbox{in $L^p(\Omega)$},
\]
uniformly in $[0,T] \times \bR^d$ and $\e>0$, where $v^{\e}$ is the solution of equation \eqref{wave-m}.
\end{theorem}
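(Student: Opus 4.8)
The plan is to upgrade the estimate \eqref{bound-Hne} — which is useless here because $C_{\mu,\e}\to\infty$ as $\e\downarrow 0$ under condition (D) alone — to an $\e$\nobreakdash-free bound obtained from the chaos decomposition of $H_n^{\e}(t,x)$ under condition (C). Concretely, writing $v_n^{\e}(t,x)=1+\sum_{k=1}^{n}H_k^{\e}(t,x)$, it suffices to produce constants $a_n$ with
\[
\sup_{(t,x)\in[0,T]\times\bR^d}\ \sup_{\e>0}\ \|H_n^{\e}(t,x)\|_p\le a_n,\qquad \sum_{n\ge 0}a_n<\infty .
\]
Once this is available, $\{v_n^{\e}(t,x)\}_n$ is Cauchy in $L^p(\Omega)$ uniformly in $(t,x)\in[0,T]\times\bR^d$ and in $\e>0$; by Lemma \ref{vne-conv} its $L^p$-limit is $v^{\e}(t,x)$, and letting $m\to\infty$ in $\|v_m^{\e}(t,x)-v_n^{\e}(t,x)\|_p\le\sum_{k=n+1}^{m}a_k$ gives $\|v_n^{\e}(t,x)-v^{\e}(t,x)\|_p\le\sum_{k>n}a_k$, which is exactly the asserted uniform convergence.

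To obtain such $a_n$, I would start from the chaos decomposition \eqref{decomp-Hne} together with $H_{n,k,J,I_1,\ldots,I_k}^{\e}(t,x)=I_{n-2k}\big(A_{n,k,J,I_1,\ldots,I_k}^{\e}(\cdot,t,x)\big)$, and apply Minkowski's inequality and hypercontractivity exactly as at the start of the proof of Theorem \ref{th-n-conv}, reducing the task to bounding $\|A_{n,k,J,I_1,\ldots,I_k}^{\e}(\cdot,t,x)\|_{\cH^{\otimes(n-2k)}}$ uniformly in $\e$. The key observation is that the Fourier transform of $A_{n,k,J,I_1,\ldots,I_k}^{\e}(\cdot,t,x)$ is obtained from \eqref{Fourier-A} simply by inserting the factor $e^{-\frac{\e}{2}\sum_{j\in J}|\xi_j|^2}$ (coming from $\cF p_{\e}$) and replacing each $\mu(d\eta_i)$ by $\mu_{\e}(d\eta_i)$ (coming from $p_{2\e}*\gamma=\cF\mu_{\e}$, see \eqref{def-mu-e}). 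Since $e^{-\frac{\e}{2}\sum_{j\in J}|\xi_j|^2}\le 1$ and $\mu_{\e}\le\mu$, the entire chain of inequalities carried out in Step 3(c) of the proof of Theorem \ref{th-conv-e} — which uses only $|\cF G|$-bounds, positivity, and the total mass condition (C) — goes through verbatim, yielding
\[
\sup_{(t,x)\in[0,T]\times\bR^d}\ \sup_{\e>0}\ \|A_{n,k,J,I_1,\ldots,I_k}^{\e}(\cdot,t,x)\|_{\cH^{\otimes(n-2k)}}^2\le \frac{T^{n}}{n!}\,K_{\mu}^{n}\,C_T^{2(n-k)}\,\frac{2^{k}\,T^{n+2k}}{(n+2k)!}\,,
\]
that is, the same bound \eqref{DCT-justif} as for the limiting kernels.

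With this $\e$-free estimate, the combinatorial bookkeeping is identical to that in the proof of Theorem \ref{th-n-conv}: there are $\binom{n}{2k}$ choices of $J$ and $(2k)!/(2^kk!)$ pair partitions of $J^c$, one uses $(n+2k)!\ge(n+k)!$, and Stirling's formula ($(2n)!\ge c_1^{-n}(n!)^2$, $(n+k)!\le c_2^{n}(n!)^{3/2}$) gives
\[
\sup_{(t,x)\in[0,T]\times\bR^d}\ \sup_{\e>0}\ \|H_n^{\e}(t,x)\|_p\le C_{p,\mu,T}^{n/2}\,c_1^{n}\,c_2^{n/2}\,\frac{3^{2n}}{(n!)^{1/4}}=:a_n ,
\]
which is summable. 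Combined with the completeness argument of the first paragraph, this proves the theorem.

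I do not anticipate a real obstacle here: the only point that needs genuine care is verifying that the Gaussian mollification does not degrade the estimate, i.e. that the bounds $|\cF p_{\e}|\le 1$ and $\mu_{\e}\le\mu$ indeed make \eqref{DCT-justif} hold uniformly in $\e>0$; once that is granted, everything else is a repetition of arguments already given. (Alternatively one could deduce the uniform bound on $\|A_{n,k,J,I_1,\ldots,I_k}^{\e}(\cdot,t,x)\|_{\cH^{\otimes(n-2k)}}$ from the splitting \eqref{An-Ane-decomp} together with \eqref{DCT-justif} and \eqref{norm-Te}, but passing through the Fourier transform directly is cleaner.)
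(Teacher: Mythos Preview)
Your proposal is correct and follows essentially the same route as the paper: reduce to the summability of $\sup_{\e,t,x}\|H_n^{\e}(t,x)\|_p$, use the chaos decomposition \eqref{decomp-Hne} together with hypercontractivity, and then observe that the mollification only introduces the harmless factors $e^{-\frac{\e}{2}\sum_{j\in J}|\xi_j|^2}\le 1$ and $\mu_{\e}\le\mu$, so that the bound \eqref{DCT-justif} holds uniformly in $\e$ and the combinatorics of Theorem \ref{th-n-conv} finish the job. The paper phrases the kernel estimate via the analogy with $T^{\e}$ and \eqref{norm-Te} rather than directly through \eqref{Fourier-A}, but this is the same computation.
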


\begin{proof} As in Theorem \ref{th-n-conv}, it is enough to prove that
\[
\sum_{n\geq 0} \sup_{(t,x) \in [0,T] \times \bR^d} \sup_{\e>0}\|H_n^{\e}(t,x)\|_p < \infty.
\]
Recall decomposition \eqref{decomp-Hne} of $H_n^{\e}(t,x)$ and the fact that \[
H_{n,k,J,I_1,\ldots,I_k}^{\e}(t,x)=I_{n-2k}
\big(A_{n,k,J,I_1,\ldots,I_k}^{\e}(\cdot,t,x)\big).
\]
Note that $A_{n,k,J,I_1,\ldots,I_k}^{\e}(\cdot,t,x)$ has the same expression as
$T_{n,k,J,I_1,\ldots,I_k}^{\e}(\cdot,t,x)$ (defined by \eqref{def-Te}), but with $h_{{\bf t},(x_j)_{j\in J^c}}^{(t,x)}-h_{{\bf t},(x_j)_{j\in J^c}}^{(t,x)} * p_{\e}^{\otimes (n-2k)}$ replaced by $h_{{\bf t},(x_j)_{j\in J^c}}^{(t,x)}* p_{\e}^{\otimes (n-2k)}$. Similarly to
\eqref{norm-Te}, we have:
\begin{align*}
& \|A_{n,k,J,I_1,\ldots,I_k}^{\e}(\cdot,t,x) \|_{\cH^{\otimes (n-2k)}}^2
\leq \frac{t^n}{n!} C_t^{2(n-k)} K_{\mu}^k   \int_{T_n(t)} \prod_{i=1}^{k}(t_{m_i+1}-t_{m_i})^2  \int_{(\bR^d)^{n-2k}}  e^{-\e \sum_{j\in J}|\xi_j|^2}\\
&  \quad \int_{(\bR^d)^k} \prod_{j\in J}
\frac{1}{1+|\sum_{s=1}^j \xi_s|^2}   \prod_{i=1}^{k} \left(\frac{1}{1+|\sum_{s=1}^{\ell_i}\xi_s|^2}\right)^{1/2} \prod_{i=1}^{k}1_{\{\xi_{\ell_i}=-\xi_{m_i}=\eta_i\}} \prod_{i=1}^{k}\mu_{\e}(d\eta_i) \prod_{j\in J}\mu(d\xi_j)d{\bf t}.
\end{align*}
Using the bound $e^{-x} \leq 1$ for $x\geq 0$, and proceeding as for \eqref{DCT-justif}, we obtain:
\[
\|A_{n,k,J,I_1,\ldots,I_k}^{\e}(\cdot,t,x) \|_{\cH^{\otimes (n-2k)}}^2 \leq \frac{t^n}{n!} K_{\mu}^{n} C_{t}^{2(n-k)}\frac{2^k t^{n+2k}}{(n+2k)!}.
\]
The rest of the proof is the same as for Theorem \ref{th-n-conv}.
\end{proof}

\subsection{Final step: $v$ is Stratonovich solution}
\label{section-Stratonovich}

In this section, we give the proof of the last statement in Theorem \ref{main}.

\begin{theorem}
If Assumption A and condition (C) hold, then $v$ is a Stratonovich solution to equation \eqref{wave}, i.e. for any $t>0$ and $x \in \bR^d$,
\[
v(t,x)\stackrel{P}{=}\lim_{\e \downarrow 0}\left(1+\int_0^t \int_{\bR^d}G(t-s,x-y)v(s,y)\dot{W}^{\e}(y)dyds\right).
\]
\end{theorem}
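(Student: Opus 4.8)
The plan is to avoid any direct comparison of $v$ with $v^{\e}$ — which would need a convergence rate in \eqref{ve-v-conv} that is not available under (C), since $C_{\mu,\e}=\int_{\bR^d}e^{-\e|\xi|^2}\mu(d\xi)$ may tend to $\infty$ as $\e\downarrow0$ when $\mu$ is infinite — and instead to pass to the limit term by term in the expansion \eqref{series-v}. Fix $\e>0$. Since $\bE|\dot{W}^{\e}(y)|^2=C_{\mu,\e}$ is constant in $y$ and $\int_0^t\int_{\bR^d}G(t-s,x-y)\,dy\,ds=t^2/2$, Cauchy--Schwarz shows that $\Phi\mapsto\int_0^t\int_{\bR^d}G(t-s,x-y)\Phi(s,y)\dot{W}^{\e}(y)\,dy\,ds$ maps the set of processes $\Phi$ with $\sup_{[0,T]\times\bR^d}\|\Phi(s,y)\|_2<\infty$ boundedly into $L^1(\Omega)$. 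As $v(s,y)=1+\sum_{n\ge1}H_n(s,y)$, with partial sums converging in $L^2$ uniformly on $[0,T]\times\bR^d$ (Theorems \ref{th-conv-e} and \ref{th-n-conv}), I may interchange the $dy\,ds$-integral with the series to get, for each fixed $\e>0$,
\[
\int_0^t\int_{\bR^d}G(t-s,x-y)v(s,y)\dot{W}^{\e}(y)\,dy\,ds=\sum_{n\ge0}\widetilde H_{n+1}^{\e}(t,x),
\]
where
\[
\widetilde H_{n+1}^{\e}(t,x):=\int_0^t\int_{\bR^d}G(t-s,x-y)H_n(s,y)\dot{W}^{\e}(y)\,dy\,ds
\]
and $H_0\equiv1$ (so $\widetilde H_1^{\e}=H_1^{\e}$). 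It thus suffices to prove, for every $n\ge0$, $p\ge2$ and $T>0$: \emph{(I)} $\sup_{(t,x)\in[0,T]\times\bR^d}\|\widetilde H_{n+1}^{\e}(t,x)-H_{n+1}(t,x)\|_p\to0$ as $\e\downarrow0$; and \emph{(II)} $\sup_{(t,x)\in[0,T]\times\bR^d}\sup_{\e>0}\|\widetilde H_{n+1}^{\e}(t,x)\|_p\le a_{n+1}$ for constants $a_n$ with $\sum_n a_n<\infty$. Granting these, (II) makes $\sum_n\widetilde H_{n+1}^{\e}$ converge in $L^1$ uniformly in $\e$, so with (I) one obtains $\int_0^t\int_{\bR^d}G(t-s,x-y)v(s,y)\dot{W}^{\e}(y)\,dy\,ds\to\sum_{n\ge0}H_{n+1}(t,x)=v(t,x)-1$ in $L^1$, hence in probability; adding $1$ gives the claim. (For $n=0$, (I) and (II) are contained in the argument of Step~1 of the proof of Theorem \ref{th-conv-e}.)

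To prove (I)--(II) I would follow the proofs of Theorems \ref{th-conv-e} and \ref{th-n-conv} almost verbatim. Writing $H_n(s,y)=\sum_{k,J,\{I_i\}}I_{n-2k}\bigl(A_{n,k,J,I_1,\ldots,I_k}(\cdot,s,y)\bigr)$ and $\dot{W}^{\e}(y)=I_1(p_{\e}(y-\cdot))$, the product formula $I_m(f)I_1(g)=I_{m+1}(f\otimes g)+m\,I_{m-1}(f\otimes_1 g)$ (kernels understood up to symmetrization), whose contraction term integrates one variable of $A_{n,k,J,I_1,\ldots,I_k}$ against $(p_{\e}*\gamma)(y-\cdot)=\cF\mu_{\e}$, expresses $H_n(s,y)\dot{W}^{\e}(y)$ as a finite sum of multiple Wiener integrals. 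Integrating in $dy\,ds$ against $G$ and applying the stochastic Fubini theorem (as in Step~2 of the proof of Theorem \ref{th-conv-e}) represents $\widetilde H_{n+1}^{\e}(t,x)$ as a finite sum of terms $I_m(\cdot)$ with $m\equiv n+1\pmod{2}$, whose $\cH^{\otimes m}$-valued kernels are built from products of $\cF G$'s, one factor $e^{-\frac{\e}{2}|\cdot|^2}$, and factors $\mu_{\e}$. Letting $\e\downarrow0$ sends $p_{\e}(y-\cdot)\to\delta_y$ and $\mu_{\e}\to\mu$; keeping track of which index is paired with the new variable $y$, one checks that the limiting kernels reassemble, term for term, into the decomposition \eqref{def-Hn} of $H_{n+1}(t,x)=I_{n+1}^{\circ}(f_{n+1}(\cdot,x;t))$ — the $I_{n-2k+1}$ pieces giving the partitions of $[n+1]$ that do not contain $n+1$, the $I_{n-2k-1}$ pieces those pairing $n+1$ with some $j\in J$. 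The $\cH^{\otimes m}$-norm convergence of the kernels follows from the dominated convergence argument of Step~4 of the proof of Theorem \ref{th-conv-e} (using \eqref{FG-bound}, Lemmas \ref{semigroup} and \ref{lemmaF}, Lemma~4.1 of \cite{balan-song}, and $0\le1-e^{-a}\le1$ with $1-e^{-a}\to0$ as $a\downarrow0$), and hypercontractivity upgrades it to $L^p$, giving (I). For (II), the same estimates with $e^{-\e|\cdot|^2}\le1$ and the combinatorial count of triples $(k,J,\{I_i\})$ from the proof of Theorem \ref{th-n-conv} yield $a_{n+1}\le C_{p,\mu,T}^{(n+1)/2}(n!)^{-1/4}\cdot(\text{polynomial in }n)$, which is summable.

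The step I expect to be the main obstacle is the combinatorial identification in (I): verifying that mollifying only the outermost factor $\dot{W}^{\e}(y)$, while leaving the already-constructed $H_n(s,y)$ intact, produces in the limit the same object as the global mollification that defines the multiple Stratonovich integral $I_{n+1}^{\circ}(f_{n+1}(\cdot,x;t))$. This is a bookkeeping identity on set partitions (which pair of indices is contracted, and with which sign) rather than an analytic difficulty, but it is where care is needed; the analytic estimates are essentially those already carried out in Sections \ref{section-e-conv} and \ref{section-n-conv}. A secondary technical point is the justification of the stochastic Fubini interchange, for which the $\cH^{\otimes m}$-integrability of the kernels — obtained along the way by \eqref{DCT-justif}-type bounds — is exactly what is required.
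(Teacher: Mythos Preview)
Your proposal is correct but takes a genuinely different route from the paper. The paper writes
\[
1+\int_0^t\!\!\int_{\bR^d}G(t-s,x-y)v(s,y)\dot{W}^{\e}(y)\,dy\,ds=v^{\e}(t,x)+L^{\e}(t,x),
\]
with $L^{\e}(t,x)=\int_0^t\int_{\bR^d}G(t-s,x-y)\bigl(v(s,y)-v^{\e}(s,y)\bigr)\dot{W}^{\e}(y)\,dy\,ds$, and then shows $\bE|L^{\e}(t,x)|\to0$. The point of this detour through $v^{\e}$ is that $v-v^{\e}=\sum I_{n-2k}(B^{\e})$ with $B^{\e}=A-A^{\e}$, so after multiplying by $\dot W^{\e}$ and integrating, every kernel carries a factor of the form $1-e^{-\e(\cdot)}$ and tends to~$0$ individually; no combinatorial reassembly is needed. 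Your approach instead proves the recurrence $\widetilde H_{n+1}^{\e}\to H_{n+1}$ directly, which is conceptually cleaner (it is exactly the statement that the Stratonovich integral is associative one step at a time) but requires the bookkeeping you flag as the main obstacle: matching the $I_{n-2k+1}$ pieces to the partitions of $[n+1]$ not involving $n+1$ and the $I_{n-2k-1}$ pieces to those pairing $n+1$ with some $j\in J$. That identification is precisely the inductive step of Theorem~\ref{product-n}, so it goes through. The analytic work --- Fourier representation of the kernels, the $\cF G$ bound \eqref{FG-bound}, Lemma~\ref{lemmaF}, H\"older/DCT, and the summability estimate from Theorem~\ref{th-n-conv} --- is common to both arguments. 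In short: the paper trades your combinatorial step for the prior convergence $v^{\e}\to v$; you trade the $T^{\e}/T'^{\e}$ splitting for an explicit partition bijection.
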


\begin{proof}
{\em Step 1.} Since $v^{\e}$ satisfies \eqref{wave-m-1}, we have the decomposition
\begin{equation}
\label{T-decomp}
T^{\e}(t,x)=1+\int_0^t \int_{\bR^d}G(t-s,x-y)v(s,y)\dot{W}^{\e}(y)dyds=v^{\e}(t,x)+L^{\e}(t,x),
\end{equation}
where
\[
L^{\e}(t,x)=\int_0^t \int_{\bR^d}G(t-s,x-y)\big(v(s,y)-v^{\e}(s,y) \big) \dot{W}^{\e}(y)dyds.
\]
Since $v^{\e}(t,x) \to v(t,x)$ in $L^p(\Omega)$ as $\e \downarrow 0$, it suffices to prove that $L^{\e}(t,x) \stackrel{P}{\to}0$ as $\e \downarrow 0$. In fact, we will show that:
\begin{equation}
\label{Le-conv}
\bE|L^{\e}(t,x)| \to 0 \quad {\mbox as} \ \e \downarrow 0.
\end{equation}

{\em Step 2.} We calculate $L^{\e}(t,x)$. Note that
\[
v(s,y)-v^{\e}(s,y)=\sum_{n\geq 1}\big(H_n(s,y)-H_{n}^{\e}(s,y)\big).
\]
We use relations \eqref{def-Hn} and \eqref{decomp-Hne} for expressing $H_n(s,y)$ and $H_n^{\e}(s,y)$. We obtain:
\begin{align*}
v(s,y)-v^{\e}(s,y)=
 \sum_{n\geq 1}\sum_{k=0}^{\lfloor n/2 \rfloor} \sum_{\substack{J \subset [n]\\ |J|=n-2k}} \sum_{\substack{\{I_1,\ldots,I_k\} \ {\rm partition} \ {\rm of} \ J^c \\ I_i=\{\ell_i,m_i\} \forall i=1,\ldots,k}}
I_{n-2k}\big(B_{n,k,J,I_1,\ldots,I_k}^{\e}(\cdot,s,y)\big),
\end{align*}
where $B_{n,k,J,I_1,\ldots,I_k}^{\e}(\cdot,s,y)=A_{n,k,J,I_1,\ldots,I_k}(\cdot,s,y)-
A_{n,k,J,I_1,\ldots,I_k}^{\e}(\cdot,s,y)$.

We multiply the last equation on display by $\dot{W}^{\e}(y)=I_1\big(p_{\e}(y-\cdot)\big)$. We obtain:
\begin{align}
\nonumber
& \big(v(s,y)-v^{\e}(s,y)\big) \dot{W}^{\e}(y)=\\
\label{ve-v-W}
& \quad
 \sum_{n\geq 1}\sum_{k=0}^{\lfloor n/2 \rfloor} \sum_{\substack{J \subset [n]\\ |J|=n-2k}} \sum_{\substack{\{I_1,\ldots,I_k\} \ {\rm partition} \ {\rm of} \ J^c \\ I_i=\{\ell_i,m_i\} \forall i=1,\ldots,k}}
I_{n-2k}\big(B_{n,k,J,I_1,\ldots,I_k}^{\e}(\cdot,s,y)\big)
I_1\big(p_{\e}(y-\cdot)\big).
\end{align}
We apply the product formula \eqref{product} for computing the product of the two Wiener integrals:
\begin{align*}
I_{n-2k}\big(B_{n,k,J,I_1,\ldots,I_k}^{\e}(\cdot,s,y)\big) I_1\big(p_{\e}(y-\cdot)\big)&=I_{n-2k+1}\big( \widetilde{B}_{n,k,J,I_1,\ldots,I_k}^{\e}(\cdot,s,y) \otimes p_{\e}(y-\cdot) \big)\\
& + (n-2k)I_{n-2k-1}\big( \widetilde{B}_{n,k,J,I_1,\ldots,I_k}^{\e}(\cdot,s,y) \otimes_1 p_{\e}(y-\cdot) \big),
\end{align*}
where $\widetilde{B}_{n,k,J,I_1,\ldots,I_k}^{\e}(\cdot,s,y)$ is the symmetrization of $B_{n,k,J,I_1,\ldots,I_k}^{\e}(\cdot,s,y)$.

Note that for any functions $f \in \cH^{\otimes m}$ and $g \in \cH$,
\[
I_{m}(\widetilde{f} \otimes g)=I_{m}(f \otimes g),
\]
since
the symmetrization of $\widetilde{f} \otimes g$ coincides with the symmetrization of $f \otimes g$, and
\[
mI_{m-1}(\widetilde{f} \otimes_1 g)=\sum_{\ell=1}^{m}\int_{(\bR^d)^{m-1}} \langle f(x_1,\ldots,x_{\ell},\cdot,x_{\ell+1},\ldots,x_m),g \rangle_{\cH} \prod_{\substack{j=1 \\ j \not = \ell}}^{m}W(dx_j).
\]
Hence,
\begin{align}
\nonumber
& I_{n-2k}\big(B_{n,k,J,I_1,\ldots,I_k}^{\e}(\cdot,s,y)\big) I_1\big(p_{\e}(y-\cdot)\big)=I_{n-2k+1}\big( B_{n,k,J,I_1,\ldots,I_k}^{\e}(\cdot,s,y) \otimes p_{\e}(y-\cdot) \big)+ \\
\label{prod-2-int}
& \quad \quad \quad  \sum_{\ell \in J} \int_{(\bR^d)^{n-2k-1}}\langle B_{n,k,J,I_1,\ldots,I_k}^{\e}((x_j)_{\substack{j \in J \\ j \not=\ell}},\cdot,s,y),p_{\e}(y-\cdot) \rangle_{\cH} \prod_{\substack{j\in J \\ j \not = \ell}}^{m}W(dx_j),
\end{align}
where $B_{n,k,J,I_1,\ldots,I_k}^{\e}((x_j)_{\substack{j \in J \\ j \not=\ell}},\cdot,s,y)$ denotes the function $x_{\ell} \mapsto B_{n,k,J,I_1,\ldots,I_k}^{\e}((x_j)_{j \in J},s,y)$.

We introduce expression \eqref{prod-2-int} into \eqref{ve-v-W}. Then we multiply by $G(t-s,x-y)$ and we integrate $ds dy$ on $[0,t] \times \bR^d$. Using stochastic Fubini's theorem for interchanging the multiple Wiener integral with the $ds dy$ integral, we obtain that:
\begin{align*}
L^{\e}(t,x)&=\sum_{n\geq 1}\sum_{k=0}^{\lfloor n/2 \rfloor} \sum_{\substack{J \subset [n]\\ |J|=n-2k}} \sum_{\substack{\{I_1,\ldots,I_k\} \ {\rm partition} \ {\rm of} \ J^c \\ I_i=\{\ell_i,m_i\} \forall i=1,\ldots,k}} \Big(
\cI_{n,k,J,I_1,\ldots,I_k}^{\e}(t,x)+ \cJ_{n,k,J,I_1,\ldots,I_k}^{\e}(t,x) \Big),
\end{align*}
where
\begin{align*}
\cI_{n,k,J,I_1,\ldots,I_k}^{\e}(t,x)&=\int_{(\bR^d)^{n-2k+1}}F_{n,k,J,I_1,\ldots,I_k}^{\e}
\big((x_j)_{j \in J},z,t,x \big)  \prod_{j\in J}W(dx_j)W(dz)\\
F_{n,k,J,I_1,\ldots,I_k}^{\e}\big((x_j)_{j \in J},z,t,x \big)&=\int_0^t \int_{\bR^d}G(t-s,x-y) B_{n,k,J,I_1,\ldots,I_k}^{\e}((x_j)_{j \in J},s,y) p_{\e}(y-z)dyds
\end{align*}
and
\begin{align*}
\cJ_{n,k,J,I_1,\ldots,I_k}^{\e}(t,x)&=\sum_{\ell \in J} \cJ_{n,k,J,I_1,\ldots,I_k}^{\e,\ell}(t,x)\\
\cJ_{n,k,J,I_1,\ldots,I_k}^{\e,\ell}(t,x)&=\int_{(\bR^d)^{n-2k-1}} Q_{n,k,J,I_1,\ldots,I_k}^{\e,\ell} \big((x_j)_{\substack{j \in J \\ j \not=\ell}},t,x \big)
\prod_{\substack{j \in J \\ j \not= \ell}}W(dx_j)\\
Q_{n,k,J,I_1,\ldots,I_k}^{\e,\ell} \big((x_j)_{\substack{j \in J \\ j \not=\ell}},t,x \big)&=\int_0^t \int_{\bR^d}G(t-s,x-y) \langle B_{n,k,J,I_1,\ldots,I_k}^{\e}\big( (x_j)_{\substack{j \in J \\ j \not=\ell}},\cdot,s,y \big),p_{\e}(y-\cdot)\rangle_{\cH}dyds.
\end{align*}

Since $B^{\e}=T^{\e}+T'^{\e}$ (see \eqref{An-Ane-decomp}), we write $F^{\e}=G^{\e}+G'^{\e}$ and $Q^{\e,\ell}=R^{\e,\ell}+R'^{\e,\ell}$, where
\begin{align*}
G_{n,k,J,I_1,\ldots,I_k}^{\e} \big((x_j)_{j \in J},z,t,x \big)&=\int_0^t \int_{\bR^d} G(t-s,x-y) T_{n,k,J,I_1,\ldots,I_k}^{\e} \big((x_j)_{j \in J},s,y \big)p_{\e}(y-z) dyds \\
R_{n,k,J,I_1,\ldots,I_k}^{\e,\ell} \big((x_j)_{\substack{j \in J \\ j \not=\ell}},t,x \big)&=\int_0^t \int_{\bR^d} G(t-s,x-y) \langle T_{n,k,J,I_1,\ldots,I_k}^{\e} \big((x_j)_{\substack{j \in J \\ j\not=\ell}},\cdot,s,y \big),p_{\e}(y-\cdot) \rangle_{\cH} dyds,
\end{align*}
and $G'^{\e},R'^{\e,\ell}$ have the same form as $G^{\e}$, respectively $R^{\e}$, but with $T^{\e}$ replaced by $T'^{\e}$.

We obtain the decomposition:
\begin{align*}
L^{\e}(t,x)&=\sum_{n\geq 1}\sum_{k=0}^{\lfloor n/2 \rfloor} \sum_{\substack{J \subset [n]\\ |J|=n-2k}} \sum_{\substack{\{I_1,\ldots,I_k\} \ {\rm partition} \ {\rm of} \ J^c \\ I_i=\{\ell_i,m_i\} \forall i=1,\ldots,k}} \Big(
\cG_{n,k,J,I_1,\ldots,I_k}^{\e}(t,x)+ \cG_{n,k,J,I_1,\ldots,I_k}'^{\e}(t,x) +\\
& \qquad \qquad \qquad \qquad \qquad \qquad \qquad \qquad \quad
\cR_{n,k,J,I_1,\ldots,I_k}^{\e}(t,x)+ \cR_{n,k,J,I_1,\ldots,I_k}'^{\e}(t,x) \Big)\\
&=:L_{1}^{\e}(t,x)+L_{2}^{\e}(t,x)+L_{3}^{\e}(t,x)+L_{4}^{\e}(t,x),
\end{align*}
where
\begin{align*}
\cG_{n,k,J,I_1,\ldots,I_k}^{\e}(t,x)&=\int_{(\bR^d)^{n-2k+1}}G_{n,k,J,I_1,\ldots,I_k}^{\e}
\big((x_j)_{j \in J},z,t,x \big)  \prod_{j\in J}W(dx_j)W(dz) \\
\cR_{n,k,J,I_1,\ldots,I_k}^{\e}(t,x)&=\sum_{\ell \in J}\cR_{n,k,J,I_1,\ldots,I_k}^{\e,\ell}(t,x) \\
\cR_{n,k,J,I_1,\ldots,I_k}^{\e,\ell}(t,x)&=\int_{(\bR^d)^{n-2k-1}} R_{n,k,J,I_1,\ldots,I_k}^{\e,\ell} \big((x_j)_{\substack{j \in J \\ j \not=\ell}},t,x \big)
\prod_{\substack{j \in J \\ j \not= \ell}}W(dx_j),
\end{align*}
$\cG'^{\e}$ has the same form as $\cG^{\e}$ but with $G^{\e}$ replaced by $G'^{\e}$, $\cR'^{\e}=\sum_{\ell \in J} \cR'^{\e,\ell}$ and $\cR'^{\e,\ell}$ has the same form as $\cR^{\e,\ell}$ but with $R^{\e,\ell}$ replaced by $R'^{\e,\ell}$.

We will prove that
\begin{equation}
\label{Li-e-zero}
\bE|L_{i}^{\e}(t,x)| \to 0 \quad \mbox{as} \ \e \downarrow 0, \quad \mbox{for} \ i=1,2,3,4.
\end{equation}

{\em Step 3.} We treat $L_1^{\e}(t,x)$.
We proceed similarly to Step 3 of the proof of Theorem \ref{th-conv-e}. Note that the function $G_{n,k,J,I_1,\ldots,I_k}^{\e}(\cdot,t,x)\in L^1((\bR^d)^{n-2k+1})$. Letting $t_{n+1}=s$ and ${\bf t}=(t_1,\ldots,t_{n+1})$, it can be proved that
\begin{align*}
& \cF G_{n,k,J,I_1,\ldots,I_k}^{\e}(\cdot,t,x)\big((\xi_j)_{j \in J},\eta \big)=e^{-i (\sum_{j\in J} \xi_j) \cdot x}e^{-i \eta \cdot x} e^{-\frac{\e}{2}|\eta|^2}
\Big(1-e^{-\frac{\e}{2}\sum_{j\in J}|\xi_j|^2} \Big) \\
& \quad \int_{T_{n+1}(t)} \int_{(\bR^d)^k}\cF G(t-t_{n+1},\cdot)\big(\sum_{j \in J}\xi_j+\eta\big) \prod_{j=1}^{n} \cF G(t_{j+1}-t_j,\cdot)\big(\sum_{s=1}^{j}\xi_s\big) \\
& \qquad \qquad \qquad \prod_{i=1}^{k}1_{\{\xi_{\ell_i}=-\xi_{m_i}=\eta_i\}} \prod_{i=1}^{k}\mu_{\e}(d\eta_i) d{\bf t}.
\end{align*}

Applying H\"older's inequality as above, and using the estimate \eqref{FG-bound-m} for the terms corresponding to $m_i$, we obtain that:
\begin{align*}
& \| G_{n,k,J,I_1,\ldots,I_k}^{\e}(\cdot,t,x)\|_{\cH^{\otimes (n-2k+1)}}^2=\int_{(\bR^d)^{n-2k+1}}|\cF G_{n,k,J,I_1,\ldots,I_k}^{\e}(\cdot,t,x)\big((\xi_j)_{j \in J},\eta \big)|^2 \prod_{j\in J}\mu(d\xi_j)\mu(d\eta)\\
& \quad \leq C_t^k K_{\mu}^k \frac{t^{n+1}}{(n+1)!}\int_{T_{n+1}(t)}\prod_{i=1}^{k}(t_{m_i+1}-t_{m_i})^2 \int_{(\bR^d)^{n-2k+1}} e^{-\e|\eta|^2} \Big(1-e^{-\frac{\e}{2}\sum_{j\in J}|\xi_j|^2} \Big)^2 \\
& \quad \quad \quad \int_{(\bR^d)^k}|\cF G(t-t_{n+1},\cdot)(\sum_{j\in J}\xi_j+\eta)|^2 \prod_{j\in J}|\cF G (t_{j+1}-t_j,\cdot)(\sum_{s=1}^{j}\xi_s)|^2 \\
& \qquad \qquad \quad \prod_{i=1}^{k}|\cF G(t_{\ell_i+1}-t_{\ell_i},\cdot)(\sum_{s=1}^{\ell_i}\xi_s)| \prod_{i=1}^{k}1_{\{\xi_{\ell_i}=-\xi_{m_i}=\eta_i\}} \prod_{i=1}^{k}\mu(d\eta_i) \prod_{j \in J}\mu(d\xi_j) d{\bf t}.
\end{align*}
The last integral converges to $0$ as $\e \downarrow 0$, by the dominated convergence theorem. Hence,
\begin{align*}
& \bE|L_1^{\e}(t,x)| \leq \sum_{n\geq 1}\sum_{k=0}^{\lfloor n/2 \rfloor} \sum_{\substack{J \subset [n]\\ |J|=n-2k}} \sum_{\substack{\{I_1,\ldots,I_k\} \ {\rm partition} \ {\rm of} \ J^c \\ I_i=\{\ell_i,m_i\} \forall i=1,\ldots,k}}\Big(\bE|\cG_{n,k,J,I_1,\ldots,I_k}^{\e}(t,x)|^2\Big)^{1/2}\\
& \quad \leq \sum_{n\geq 1}\sum_{k=0}^{\lfloor n/2 \rfloor} \sum_{\substack{J \subset [n]\\ |J|=n-2k}} \sum_{\substack{\{I_1,\ldots,I_k\} \ {\rm partition} \ {\rm of} \ J^c \\ I_i=\{\ell_i,m_i\} \forall i=1,\ldots,k}}\big((n-2k+1)!\big)^{1/2}\|G_{n,k,J,I_1,\ldots,I_k}^{\e}(t,x)\|_{\cH^{\otimes (n-2k+1)}}.
\end{align*}
The last series converges to $0$ as $\e \downarrow 0$, again by the dominated convergence theorem. The application of this theorem is justified as in the proof of Theorem \ref{th-n-conv}.

\medskip

{\em Step 4.} We treat $L_2^{\e}(t,x)$. It can be proved that the function $G_{n,k,J,I_1,\ldots,I_k}'^{\e}(\cdot,t,x)$ belongs to $L^1((\bR^d)^{n-2k+1})$ and its Fourier transform is
\begin{align*}
& \cF G_{n,k,J,I_1,\ldots,I_k}'^{\e}(\cdot,t,x)\big((\xi_j)_{j \in J},\eta \big)=e^{-i (\sum_{j\in J} \xi_j) \cdot x}e^{-i \eta \cdot x} e^{-\frac{\e}{2}|\eta|^2} \\
& \quad \int_{T_{n+1}(t)} \int_{(\bR^d)^k}\cF G(t-t_{n+1},\cdot)\big(\sum_{j \in J}\xi_j+\eta\big) \prod_{j=1}^{n} \cF G(t_{j+1}-t_j,\cdot)(\sum_{s=1}^{j}\xi_s) \\
& \qquad \qquad \qquad \prod_{i=1}^{k}1_{\{\xi_{\ell_i}=-\xi_{m_i}=\eta_i\}} \Big(1-e^{-\e \sum_{i=1}^{k}|\eta_i|^2}\Big) \prod_{i=1}^{k}\mu(d\eta_i) d{\bf t},
\end{align*}
where ${\bf t}=(t_1,\ldots,t_{n+1})$. As in Step 4, one can show that $\bE|L_2^{\e}(t,x)|\to 0$ as $\e \downarrow 0$.

\medskip

{\em Step 5.} We treat $L_3^{\e}(t,x)$. Expressing the $\cH$-inner product in terms of Fourier transforms, we see that
\begin{align*}
 &R_{n,k,J,I_1,\ldots,I_k}^{\e,\ell}\big((x_j)_{\substack{j \in J \\ j \not=\ell}},t,x\big)=\int_0^t \int_{\bR^d}G(t-s,x-y)\int_{\bR^d} e^{i \xi_{\ell} \cdot y} e^{-\frac{\e}{2}|\xi_{\ell}|^2}  \\
& \qquad \qquad \qquad \left(\int_{\bR^d} e^{-i \xi_{\ell} \cdot x_l} T_{n,k,J,I_1,\ldots,I_k}^{\e}\big( (x_j)_{j\in J},s,y \big) dx_{\ell} \right) \mu(d\xi_{\ell})dyds.
\end{align*}
Therefore,
\begin{align*}
& \cF R_{n,k,J,I_1,\ldots,I_k}^{\e,\ell}(\cdot,t,x)\big( (\xi_j)_{\substack{j \in J \\ j \not=\ell}}\big)=\int_{(\bR^d)^{n-2k-1}} e^{-i \sum_{\substack{j \in J \\ j \not=\ell}}\xi_j \cdot x_j}
R_{n,k,J,I_1,\ldots,I_k}^{\e,\ell}\big( (x_j)_{\substack{j \in J \\ j \not=\ell}},t,x\big)d \big( (x_j)_{\substack{j \in J \\ j \not=\ell}} \big)\\
& \quad =\int_0^t \int_{\bR^d}G(t-s,x-y) \int_{\bR^d} \cF T_{n,k,J,I_1,\ldots,I_k}^{\e}(\cdot,s,y)\big((\xi_j)_{j\in J}\big)e^{i \xi_{\ell}\cdot y} e^{-\frac{\e}{2}|\xi_{\ell}|^2} \mu(d\xi_{\ell})dyds.
\end{align*}
We now insert formula \eqref{def-FTe} for $\cF T_{n,k,J,I_1,\ldots,I_k}^{\e}(\cdot,s,y)$. We use the fact that
\[
\int_{\bR^d}e^{-i \big(\sum_{\substack{j \in J \\ j \not=\ell} } \xi_j\big)\cdot y}G(t-s,x-y)dy=e^{-i \big(\sum_{\substack{j \in J \\ j \not=\ell} } \xi_j\big)\cdot x} \cF G (t-s,\cdot)\big(\sum_{\substack{j \in J \\ j \not=\ell}}\xi_j\big).
\]
Changing the notation $t_{n+1}=s$ and denoting ${\bf t}=(t_1,\ldots,t_{n+1})$, we obtain that
\begin{align*}
& \cF R_{n,k,J,I_1,\ldots,I_k}^{\e,\ell}(\cdot,t,x)\big( (\xi_j)_{\substack{j \in J \\ j \not=\ell}}\big)=e^{-i \big(\sum_{\substack{j \in J \\ j \not=\ell} } \xi_j\big)\cdot x}\int_{T_{n+1}(t)} \int_{(\bR^d)^{k+1}} e^{-\frac{\e}{2}|\xi_{\ell}|^2}\Big(1-e^{-\frac{\e}{2} \sum_{j\in J}|\xi_j|^2}\Big)\\
& \ \cF G(t-t_{n+1},\cdot) \big(\sum_{\substack{j\in J \\ j \not=\ell}}\xi_j \big) \prod_{j=1}^{n}\cF G(t_{j+1}-t_j,\cdot)\big( \sum_{s=1}^{j}\xi_{s} \big) \prod_{i=1}^{k}1_{\{\xi_{\ell_i}=-\xi_{m_i}=\eta_i\}} \prod_{i=1}^{k}\mu_{\e}(d\eta_i)\mu(d\xi_{\ell}) d{\bf t}.
\end{align*}
By applying H\"older's inequality, \eqref{FG-bound-m} and inequality $e^{-x}\leq 1$ for $x\geq 0$, it can be proved that
\begin{align*}
& \|R_{n,k,J,I_1,\ldots,I_k}^{\e,\ell}(\cdot,t,x)\|_{\cH^{\otimes (n-2k-1)}}^2 = \int_{(\bR^d)^{n-2k-1}} |\cF R_{n,k,J,I_1,\ldots,I_k}^{\e,\ell}(\cdot,t,x)\big( (\xi_j)_{\substack{j \in J \\ j \not=\ell}}\big)|^2 \prod_{\substack{j \in J \\ j\not=\ell}}\mu(d\xi_j)\\
& \quad
\leq C_t^{k+1} K_{\mu}^{k+1}\frac{t^{n+1}}{(n+1)!}\int_{T_{n+1}(t)}
\prod_{i=1}^{k}(t_{m_i+1}-t_{m_i})^2 \int_{(\bR^d)^{n-2k-1}}\int_{(\bR^d)^{k+1}}e^{-\e|\xi_{\ell}|^2}\\
& \quad \quad \quad \Big( 1-e^{-\frac{\e}{2}\sum_{j\in J}|\xi_j|^2}\Big)^2
|\cF G(t-t_{n+1},\cdot) \big(\sum_{\substack{j\in J \\ j \not=\ell}}\xi_j \big)|^2 \prod_{j\in J}|\cF G(t_{j+1}-t_j,\cdot)\big( \sum_{s=1}^{j}\xi_{s} \big) |^2 \\
&  \quad \quad \quad  \prod_{i=1}^{k}|\cF G(t_{\ell_i+1}-t_{\ell_i},\cdot)\big( \sum_{s=1}^{\ell_i}\xi_{s} \big) | \prod_{i=1}^{k}1_{\{\xi_{\ell_i}=-\xi_{m_i}=\eta_i\}}
 \prod_{i=1}^{k}\mu(d\eta_i)\mu(d\xi_{\ell})  \prod_{\substack{j \in J \\ j\not=\ell}}\mu(d\xi_j) d{\bf t}.
\end{align*}
The last integral converges to $0$ by the dominated convergence theorem. Hence,
\begin{align*}
& \bE|L_3^{\e}(t,x)| \leq \sum_{n\geq 1}\sum_{k=0}^{\lfloor n/2 \rfloor} \sum_{\substack{J \subset [n]\\ |J|=n-2k}} \sum_{\substack{\{I_1,\ldots,I_k\} \ {\rm partition} \ {\rm of} \ J^c \\ I_i=\{\ell_i,m_i\} \forall i=1,\ldots,k}}\sum_{\ell \in J} \Big(\bE|\cR_{n,k,J,I_1,\ldots,I_k}^{\e,\ell}(t,x)|^2\Big)^{1/2}\\
& \quad \leq \sum_{n\geq 1}\sum_{k=0}^{\lfloor n/2 \rfloor} \sum_{\substack{J \subset [n]\\ |J|=n-2k}} \sum_{\substack{\{I_1,\ldots,I_k\} \ {\rm partition} \ {\rm of} \ J^c \\ I_i=\{\ell_i,m_i\}\forall i=1,\ldots,k}} \sum_{\ell \in J} \big((n-2k-1)!\big)^{1/2}\|R_{n,k,J,I_1,\ldots,I_k}^{\e,\ell}(t,x)\|_{\cH^{\otimes (n-2k+1)}}.
\end{align*}
The last series converges to $0$ as $\e \downarrow 0$, again by the dominated convergence theorem.

\medskip

{\em Step 6.} We treat $L_4^{\e}(t,x)$. Similarly to Step 5, it can be proved that
\begin{align*}
& \cF R_{n,k,J,I_1,\ldots,I_k}'^{\e,\ell}(\cdot,t,x)\big( (\xi_j)_{\substack{j \in J \\ j \not=\ell}}\big)=e^{-i \big(\sum_{\substack{j \in J \\ j \not=\ell} } \xi_j\big)\cdot x} \int_{T_{n+1}(t)} \int_{(\bR^d)^{k+1}} e^{-\frac{\e}{2}|\xi_{\ell}|^2} \Big(1-e^{-\e \sum_{i=1}^{k}|\eta_i|^2}\Big)\\
& \ \cF G(t-t_{n+1},\cdot) \big(\sum_{\substack{j\in J \\ j \not=\ell}}\xi_j \big) \prod_{j=1}^{n}\cF G(t_{j+1}-t_j,\cdot)\big( \sum_{s=1}^{j}\xi_{s} \big) \prod_{i=1}^{k}1_{\{\xi_{\ell_i}=-\xi_{m_i}=\eta_i\}} \prod_{i=1}^{k}\mu(d\eta_i)\mu(d\xi_{\ell}) d{\bf t}
\end{align*}
and $\bE|L_4^{\e}(t,x)| \to 0$ as $\e \downarrow 0$.

This concludes the proof of \eqref{Li-e-zero} and the proof of the theorem.
\end{proof}

\section{Feynman-Kac-type formula}

\label{section-FK}

In this section, we give the proof of Theorem \ref{FK-th}.
First, we introduce the notation:
\begin{align}
\label{def-FK-Hn}
& \bE^{N,X}\Big[1_{\{N_t=n\}}\prod_{i=1}^{n} (\tau_i-\tau_{i-1}) \prod_{i=1}^{n}\dot{W}(X_{\tau_i}^x) \Big]:=\\
\nonumber
& \quad \quad \bE^{N} \Big[ 1_{\{N_t=n\}}  \prod_{i=1}^{n} (\tau_i-\tau_{i-1}) \int_{(\bR^{d})^n}\prod_{i=1}^{n}\frac{G(\tau_i-\tau_{i-1},x_i-x_{i-1})}{\tau_i -\tau_{i-1}}\prod_{i=1}^{n}W^{\circ}(dx_i) \Big],
\end{align}
where $\tau_0=0$, $x_0=x$.
This notation is motivated by the formal relation
$\prod_{i=1}^{n}W^{\circ}(dx_i) =\prod_{i=1}^{n}\dot{W}(x_i)dx_1 \ldots dx_n$ (see
\eqref{formal-V}) and a formal application of Lemma \ref{elem-lem} (similar to the proof of Lemma \ref{lemma-FK-ve}), with $X=(X_{\tau_1}^x,\ldots,X_{\tau_n}^x)$, $Y=N$, $Z=W$,
\[
g(x_1,\ldots,x_n,W)=\prod_{i=1}^{n}\dot{W}(x_i) \quad \mbox{and} \quad
h(N)=1_{\{N_t=n\}} \prod_{i=1}^{n}(\tau_i-\tau_{i-1}),
\]



\medskip

We need an auxiliary result.

\begin{lemma}
For any random variable $Y:\Omega \to \cY$ 
and a non-negative measurable function $f$ on $(\bR^{d})^n \times \cY$, such that $f(\cdot,y) \in \cH^{\otimes n}$ for any $y \in \cY$ and $\bE[f(\cdot,Y)] \in \cH^{\otimes n}$, we have:
\begin{equation}
\label{e-id4}
\int_{\bR^{2d}}\bE[f(x_1,\ldots,y_n,Y)] \prod_{i=1}^{n}W^{\circ}(dx_i)=\bE^Y \left[ \int_{\bR^{2d}}f(x_1,\ldots,x_n,Y) \prod_{i=1}^{n}W^{\circ}(dx_i) \right].
\end{equation}
\end{lemma}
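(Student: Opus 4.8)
The plan is to reduce \eqref{e-id4} to a commutation statement between a single multiple Wiener--It\^o integral and the operation $\bE^{Y}$ of integrating out $Y$, and then to establish that commutation by a duality argument on a fixed Wiener chaos. Throughout I use that $Y$ is independent of $W$, as it is in the application (where $Y=N$). By the definition of the multiple Stratonovich integral and the decomposition recalled in Section \ref{section-multiple-Str}, any $F\in\cH^{\otimes n}$ whose multiple Stratonovich integral exists satisfies $I_n^{\circ}(F)=\sum_{k=0}^{\lfloor n/2\rfloor}\sum_{J}\sum_{\{I_1,\ldots,I_k\}}I_{n-2k}(F_{k,J,I_1,\ldots,I_k})$, where $F_{k,J,I_1,\ldots,I_k}$ is obtained from $F$ by integrating out the variables indexed by $J^c$ against $\prod_{i=1}^{k}\gamma(x_{\ell_i}-x_{m_i})$. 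Since $f\ge 0$ and $\gamma\ge 0$, Tonelli's theorem shows that this $\gamma$-contraction commutes with the integration in $Y$, i.e.\ $\big(\bE[f(\cdot,Y)]\big)_{k,J,I_1,\ldots,I_k}=\bE\big[f(\cdot,Y)_{k,J,I_1,\ldots,I_k}\big]$ as a Bochner integral in $\cH^{\otimes(n-2k)}$. Applying the decomposition on both sides of \eqref{e-id4}, the identity reduces to the following: for $g\colon(\bR^d)^m\times\cY\to[0,\infty)$ with $g(\cdot,y)\in\cH^{\otimes m}$ for every $y$ and $\bE\big[\|g(\cdot,Y)\|_{\cH^{\otimes m}}\big]<\infty$, one has
\[
I_m\big(\bE[g(\cdot,Y)]\big)=\bE^{Y}\big[I_m(g(\cdot,Y))\big],
\]
applied with $m=n-2k$ and $g(\cdot,y)=f(\cdot,y)_{k,J,I_1,\ldots,I_k}$.

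To prove this commutation, I would first note that both sides belong to the $m$-th Wiener chaos $\cH_m$: the left-hand side because $\bE[g(\cdot,Y)]\in\cH^{\otimes m}$, and the right-hand side because $y\mapsto I_m(g(\cdot,y))$ is Bochner integrable with values in the closed subspace $\cH_m\subset L^2(\Omega)$, since $\|I_m(g(\cdot,y))\|_{L^2(\Omega)}=\sqrt{m!}\,\|\widetilde{g(\cdot,y)}\|_{\cH^{\otimes m}}\le\sqrt{m!}\,\|g(\cdot,y)\|_{\cH^{\otimes m}}$ is integrable in $y$. As $\{I_m(h):h\in\cH^{\otimes m}\}$ is total in $\cH_m$, it is enough to check that the two sides have the same inner product in $L^2(\Omega)$ with $I_m(h)$ for every $h\in\cH^{\otimes m}$. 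Using the isometry $\bE[I_m(a)I_m(b)]=m!\,\langle\widetilde a,\widetilde b\rangle_{\cH^{\otimes m}}$ and the commutation of symmetrization and of $\langle\cdot,\widetilde h\rangle$ with the Bochner integral, the left-hand side yields $m!\,\langle\bE[\widetilde{g(\cdot,Y)}],\widetilde h\rangle=m!\,\bE\big[\langle\widetilde{g(\cdot,Y)},\widetilde h\rangle_{\cH^{\otimes m}}\big]$. For the right-hand side, since $Y\perp W$ we have $\bE^{Y}[\,\cdot\,]=\bE[\,\cdot\mid W\,]$ by \eqref{e-id1}; as $I_m(h)$ is $W$-measurable this gives $\bE\big[\bE^{Y}[I_m(g(\cdot,Y))]\,I_m(h)\big]=\bE[I_m(g(\cdot,Y))I_m(h)]$, and conditioning on $Y$ (given $Y=y$, $g(\cdot,y)$ is deterministic and $W$ keeps its law) this equals $m!\,\bE\big[\langle\widetilde{g(\cdot,Y)},\widetilde h\rangle_{\cH^{\otimes m}}\big]$. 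The two expressions coincide, proving the commutation and hence \eqref{e-id4}.

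The step I expect to be the real obstacle is the integrability and joint-measurability bookkeeping, not the algebra: one must know in advance that all the Stratonovich integrals, the contractions $f(\cdot,Y)_{k,J,I_1,\ldots,I_k}$ and the Bochner integrals above are well-defined, i.e.\ that $\bE\big[\|f(\cdot,Y)_{k,J,I_1,\ldots,I_k}\|_{\cH^{\otimes(n-2k)}}\big]<\infty$ for each $(k,J,\{I_1,\ldots,I_k\})$ and that $(y,\omega)\mapsto I_{n-2k}\big(f(\cdot,y)_{k,J,I_1,\ldots,I_k}\big)$ is jointly measurable. In the application these follow from $f\ge 0$, $\gamma\ge 0$ and the uniform bounds of the type \eqref{DCT-justif} proved in Section \ref{section-e-conv}. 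This is also why it is convenient to argue on the level of the chaos decomposition, with honest $L^2$-projections, rather than directly with the mollified noise $\dot W^{\e}$: because $\dot W^{\e}$ changes sign, Tonelli cannot be invoked to exchange $\bE^{Y}$ with the Lebesgue integral appearing in the definition of the Stratonovich integral, whereas the argument above uses only Fubini for non-negative kernels and the continuity of $I_m$ on a fixed chaos.
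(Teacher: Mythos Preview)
Your proposal is correct and follows essentially the same route as the paper: both arguments expand $\prod_{i=1}^n W^{\circ}(dx_i)$ via the decomposition \eqref{def-product-V} into multiple Wiener integrals plus $\gamma$-correction terms, and then commute $\bE^Y$ with each piece separately. The only difference is in how the core commutation $I_m\big(\bE[g(\cdot,Y)]\big)=\bE^Y\big[I_m(g(\cdot,Y))\big]$ is justified: the paper simply invokes the stochastic Fubini theorem (integrating the deterministic-in-$W$ measure $\mu_Y(dy)$ through the Wiener integral), whereas you give an explicit duality argument on $\cH_m$ by testing against $I_m(h)$. Your argument is more self-contained and makes the integrability hypotheses visible, but it is the same identity; note also that since $\bE^Y[\cdot]$ is by definition integration against the deterministic law $\mu_Y$, the independence of $Y$ and $W$ is not actually needed for this step---your detour through $\bE[\cdot\mid W]$ can be replaced by a direct application of Fubini to $\int \bE[I_m(g(\cdot,y))I_m(h)]\,\mu_Y(dy)$.
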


\begin{proof}
We first examine the case $n=1$. Recall that (for deterministic integrands), $W^{\circ}(dx)=W(dx)$.
Let $\mu_Y$ be the law of $Y$. By the stochastic Fubini theorem,
\begin{align}
\nonumber
\bE^{Y}\left[\int_{\bR^d}f(x,Y)W(dx) \right]&=\int_{\cY}\left( \int_{\bR^d}f(x,y)W(dx) \right) \mu_{Y}(dy) \\
\label{e-id3}
& =\int_{\bR^d}\left( \int_{\cY}f(x,y)\mu_{Y}(dy) \right)W(dx)=
\int_{\bR^d}\bE[f(x,Y)] W(dx).
\end{align}
The general case is proved in the same way, using linearity. For instance, for $n=2$, we use the fact that $W^{\circ}(dx_1)W^{\circ}(dx_2)=W(dx_1)W(dx_2)+\gamma(x_1-x_2)dx_1 dx_2$.
\end{proof}

\medskip

The proof of Theorem \ref{FK-th} follows from the following result.

\begin{proposition}
If Assumption A and condition (C) hold, for any $n\geq 1$, $t>0$, $x \in \bR^d$, with probability 1,
\[
H_n(t,x)=\bE^{N,X}\Big[1_{\{N_t=n\}}\prod_{i=1}^{n} (\tau_i-\tau_{i-1}) \prod_{i=1}^{n}\dot{W}(X_{\tau_i}^x) \Big],
\]
where the right-hand-side is defined by \eqref{def-FK-Hn}.
\end{proposition}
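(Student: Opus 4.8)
The plan is to push the averaging over the Poisson data $N$ through the multiple Stratonovich integral by means of the auxiliary identity \eqref{e-id4}, and then to reduce the resulting expectation to a deterministic computation via \eqref{link-T-N} and the definition \eqref{def-Hn} of $H_n$. Concretely, I would take $Y=N$ (equivalently the pair $(N_t,(\tau_i)_i)$), $Z=W$, and apply \eqref{e-id4} to the kernel
\[
f(x_1,\ldots,x_n,N)=1_{\{N_t=n\}}\prod_{i=1}^n(\tau_i-\tau_{i-1})\prod_{i=1}^n\frac{G(\tau_i-\tau_{i-1},x_i-x_{i-1})}{\tau_i-\tau_{i-1}},\qquad \tau_0=0,\ x_0=x,
\]
which is legitimate since $W$ and $N$ are independent. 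By construction the right-hand side of \eqref{e-id4} is exactly the quantity defined in \eqref{def-FK-Hn}, so the assertion amounts to showing that the left-hand side, $\int_{(\bR^d)^n}\bE[f(\mathbf{x},N)]\prod_{i=1}^nW^\circ(dx_i)$, equals (the appropriate multiple of) $H_n(t,x)$.

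For the left-hand side I would first observe that $\prod_i(\tau_i-\tau_{i-1})$ cancels the denominators, so $\bE[f(\mathbf{x},N)]=\bE\big[1_{\{N_t=n\}}\prod_{i=1}^nG(\tau_i-\tau_{i-1},x_i-x_{i-1})\big]$, and then apply \eqref{link-T-N} to get $\bE[f(\mathbf{x},N)]=e^{-t}\int_{T_n(t)}\prod_{i=1}^nG(s_i-s_{i-1},x_i-x_{i-1})\,d\mathbf{s}$, with $s_0=0$, $x_0=x$. The change of variables $\bar s_i=t-s_{n+1-i}$, $\bar x_i=x_{n+1-i}$ — exactly as in the passage from \eqref{def-Hne} to \eqref{Hne-2} — turns this into $e^{-t}$ times $f_n(\cdot,x;t)$ with its space arguments permuted. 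Since the multiple Stratonovich integral is invariant under permutations of its integration variables (this follows from the symmetric form of the expansion \eqref{def-product-V} together with the symmetry $I_m(g)=I_m(\widetilde g)$ of multiple Wiener integrals), I obtain $\int_{(\bR^d)^n}\bE[f(\mathbf{x},N)]\prod_iW^\circ(dx_i)=e^{-t}I_n^\circ(f_n(\cdot,x;t))=e^{-t}H_n(t,x)$ by \eqref{def-Hn}. Equating the two sides of \eqref{e-id4} then gives $\bE^{N,X}[1_{\{N_t=n\}}\prod_i(\tau_i-\tau_{i-1})\prod_i\dot W(X^x_{\tau_i})]=e^{-t}H_n(t,x)$, i.e.\ $H_n(t,x)=e^t\bE^{N,X}[\cdots]$, the claimed identity in the normalization that enters Theorem \ref{FK-th}.

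The one genuinely analytic point — and the main obstacle — is verifying that \eqref{e-id4} is applicable, i.e.\ that $f(\cdot,N)$ and $\bE[f(\cdot,N)]$ lie in the classes for which the multiple Stratonovich integral is defined, with enough integrability in the time variables to justify the Fubini interchange. For $\bE[f(\cdot,N)]$ this is immediate from Theorem \ref{th-conv-e}, since that expectation is a permuted, rescaled copy of $f_n(\cdot,x;t)$, whose contractions lie in the relevant $\cH^{\otimes(n-2k)}$. For $f(\cdot,N)$ itself one needs the \emph{pinned} statement: for almost every $\mathbf{s}=(s_1,\ldots,s_n)\in T_n(t)$ the kernel $h_{\mathbf{s}}(\mathbf{x})=\prod_{i=1}^nG(s_i-s_{i-1},x_i-x_{i-1})$ is multiple-Stratonovich integrable, with the $\cH^{\otimes(n-2k)}$-norms of its contractions integrable over $T_n(t)$ against $\prod_i(s_i-s_{i-1})\,d\mathbf{s}$. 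This is obtained by rerunning Steps 2--4 of the proof of Theorem \ref{th-conv-e}, reading the Fourier/H\"older estimates (in particular the row-by-row bookkeeping of the frequencies $\xi_{\ell_i}=-\xi_{m_i}=\eta_i$ that leads to \eqref{DCT-justif}) at the level of the integrand in the time variables rather than after integrating over $T_n(t)$; because \eqref{FG-bound} bounds $|\cF G(\cdot,\cdot)(\xi)|$ uniformly for bounded time, these pinned estimates do not blow up and remain integrable over $T_n(t)$, so the $\e\downarrow0$ limits defining $I_n^\circ(h_{\mathbf{s}})$ (the $T_{n,k,\ldots}^\e$ and $T_{n,k,\ldots}'^\e$ terms) vanish by dominated convergence exactly as before. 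Everything else — the Fubini interchanges underlying \eqref{e-id4}, the application of \eqref{link-T-N}, and the permutation-invariance of $I_n^\circ$ — is routine given the material already developed.
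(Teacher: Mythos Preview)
Your proposal is correct and follows the same route as the paper---apply \eqref{e-id4} with $Y=N$, reduce via the Poisson representation \eqref{link-T-N}, and use the change of variables from \eqref{def-Hne} to \eqref{Hne-2}---though the paper writes out only the cases $n=1,2$ and declares the general case ``similar.'' Your careful discussion of the pinned Stratonovich-integrability needed to justify \eqref{e-id4}, and your observation about the $e^t$ normalization (which the paper's proposition statement omits but its proof and Theorem~\ref{FK-th} supply), go beyond what the paper does explicitly.
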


\begin{proof}
We treat only the cases $n=1$ and $n=2$. The general case is similar.

 Let $n=1$. Using the Poisson representation \eqref{link-T-N} and relation \eqref{e-id3} (with $Y=N$),
\begin{align*}
H_1(t,x)&=\int_0^t \int_{\bR^d}G(t-t_1,x-x_1) W(dx_1)dt_1=\int_{\bR^d}\left(\int_0^t G(s_1,x_1-x) ds_1\right) W(dx_1)\\
&=e^{t} \int_{\bR^d}\bE\big[G(\tau_1,x_1-x)1_{\{N_t=1\}}\big]W(dx_1)=e^{t} \bE^{N}\left[ \int_{\bR^d} G(\tau_1,x_1-x) 1_{\{N_t=1\}} W(dx_1) \right]\\
&=e^{t} \bE^{N}\left[1_{\{N_t=1\}} \tau_1\int_{\bR^d} \frac{G(\tau_1,x_1-x)}{\tau_1}  W(dx_1) \right]
\end{align*}

If $n=2$, we use the change of variables $s_1=t-t_2,s_2=t-t_1$, $x_1'=x_2$, $x_2'=x_1$, followed by Poisson representation \eqref{link-T-N} and relation \eqref{e-id4} (with $Y=N$):
\begin{align*}
H_2(t,x)&=\int_{T_2(t)}\int_{\bR^{2d}}G(t-t_2,x-x_2)G(t_2-t_1,x_2-x_1)
W^{\circ}(dx_1)W^{\circ}(dx_2)d{\bf t}\\
&=\int_{\bR^{2d}}\left(\int_{T_2(t)}G(s_1,x_1'-x)G(s_2-s_1,x_2'-x_1')d{\bf s}\right) W^{\circ}(dx_2')W^{\circ}(dx_1')\\
&=e^t \int_{\bR^{2d}} \bE \big[ G(\tau_1,x_1-x)G(\tau_2-\tau_1,x_2-x_1)1_{\{N_t=2\}}\big] W^{\circ}(dx_1)W^{\circ}(dx_2)\\
&=e^t  \bE^{N} \left[ \int_{\bR^{2d}}G(\tau_1,x_1-x)G(\tau_2-\tau_1,x_2-x_1)1_{\{N_t=2\}}
W^{\circ}(dx_1)W^{\circ}(dx_2)\right]\\
&=e^t  \bE^{N} \left[ 1_{\{N_t=2\}} \tau_1 (\tau_2-\tau_1) \int_{\bR^{2d}}\frac{G(\tau_1,x_1-x)}{\tau_1}\frac{G(\tau_2-\tau_1,x_2-x_1)}{\tau_2-\tau_1}
W^{\circ}(dx_1)W^{\circ}(dx_2)\right].
\end{align*}

\end{proof}

\section{Smooth noise case}
\label{section-mu-finite}

In this section, we consider the case of a smooth noise $W$ with a finite spectral measure $\mu$. In this case, $\gamma(x)=\int_{\bR^d} e^{-i \xi \cdot x}\mu(d\xi)$ for any $x \in \bR^d$, and we do not need Assumption A.

\medskip

Since $\mu(\bR^d)<\infty$, the sequence $\{p_{\e}(x-\cdot)\}_{\e>0}$ is Cauchy in $\cH$, uniformly in $x \in \bR^d$:
\[
\|p_{\e}(x-\cdot)-p_{\e'}(x-\cdot) \|_{\cH}^2=\int_{\bR^2} \left(e^{-\e|\xi|^2/2}-e^{-\e'|\xi|^2/2} \right)^2 \mu(d\xi) \to 0 \quad \mbox{as $\e,\e' \to 0$}.
\]
We denote its limit by $A(x)$. Then $\lim_{\e \downarrow 0}\|p_{\e}(x-\cdot)-A(x)\|_{\cH} =0$ uniformly in $x \in \bR^d$, and
\[
\|A(x)\|_{\cH}^2=\lim_{\e \downarrow 0} \|p_{\e}(x-\cdot)\|_{\cH}^2=\lim_{\e \downarrow 0}\int_{\bR^d}e^{-\e |\xi|^2}\mu(d\xi)=\mu(\bR^d).
 \]

The random variable
\[
\dot{W}(x):=W(A(x))
\]
is well-defined and $\dot{W}^{\e}(x) \to \dot{W}(x)$ in $L^2(\Omega)$, as $\e \downarrow 0$, uniformly in $x$.
For any $p>0$,
\begin{equation}
\label{bound-We2}
\bE|\dot{W}^{\e}(x)|^{p}=z_{p}(\bE|\dot{W}^{\e}(x)|^{2})^{p/2} \leq z_{p} (\mu(\bR^d))^{p/2}
\end{equation}
\[
\bE|\dot{W}(x)|^{p}=z_{p}(\bE|\dot{W}(x)|^{2})^{p/2} =z_{p} (\mu(\bR^d))^{p/2}
\]

\begin{remark}
{\rm The Gaussian process $\{\dot{W}(x)\}_{x \in \bR^d}$ has stationary increments (in Yaglom sense) and covariance function:
\[
\bE|\dot{W}(x)-\dot{W}(y)|^2=\int_{\bR^d}(1-e^{i\xi \cdot (x-y)})^2 \mu(d\xi).
\]
This process is {\em not the same} as the mean-square derivative process $W'$ of the random field $W(x)=W(1_{[0,x]})$, which is defined as the limit $W'(x)=\lim_{h \to 0}\frac{W(x+h)-W(x)}{h}$ in $L^2(\Omega)$. (This limit exists if $\mu(\bR^d)<\infty$.)
To see this, it suffices to note that $\{W'(x)\}_{x \in \bR^d}$ is a stationary Gaussian process with zero-mean and covariance
\[
\bE[W'(x)W'(y)]=\int_{\bR^d}e^{-i \xi \cdot(x-y)}\mu(d\xi).
\]
}
\end{remark}

\medskip

Let $\overline{v}_n(t,x)=1+\sum_{k=1}^{n}\overline{H}_k(t,x)$, where
\[
\overline{H}_n(t,x)=\int_{T_n(t)} \int_{\bR^{nd}} \prod_{i=1}^{n}G(t_{i+1}-t_i,x_{i+1}-x_i) \prod_{i=1}^{n}\dot{W}(x_i)d{\bf x}d{\bf t}.
\]

Since $\bE|\dot{W}^{\e}(x)|^{p}$ is bounded by a constant that does not depend on $\e$, similarly to \eqref{bound-Hne}, we have:
\[
\|\overline{H}_n(t,x)\|_p \leq \frac{2^{n/2}}{\pi^{1/(2p)}}C_{p}^{n/p} \big(\mu(\bR^d)\big)^{n/2}c_0^n \frac{t^{2n}}{(n!)^{3/2}}.
\]
and the convergence in Lemma \ref{vne-conv} is uniform in $\e$.
Moreover, the proof of Lemma \ref{vne-conv} can be repeated with $\dot{W}^{\e}$ replaced by $\dot{W}$, to infer that $\{\overline{v}_n(t,x)\}_n$ is a Cauchy sequence in $L^p(\Omega)$, uniformly in $(t,x) \in [0,T] \times \bR^d$.

\bigskip

The next lemma deals with the issue of convergence when $\e \downarrow 0$.

\begin{lemma}
If $\mu(\bR^d)<\infty$, then for any $p\geq1$, $T>0$ and $n\geq1$,
\begin{equation}
\label{vne-vn}
\sup_{(t,x) \in [0,T] \times \bR^d}
\bE|H_n^{\e}(t,x)-\overline{H}_{n}(t,x)|^p \to 0 \quad \mbox{as $\e \downarrow 0$}.
\end{equation}
Consequently, for any $t>0,x \in \bR^d$, $H_n(t,x)=\overline{H}_n(t,x)$ a.s. and $v_n(t,x)=\overline{v}_n(t,x)$ a.s.
\end{lemma}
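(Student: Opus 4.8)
The idea is to estimate $\|H_n^{\e}(t,x)-\overline{H}_n(t,x)\|_p$ directly, without any chaos decomposition, exploiting that $\mu(\bR^d)<\infty$ makes all Gaussian factors $\dot{W}^{\e}(x)$, $\dot{W}(x)$ have moments bounded uniformly in the space variable. First I would write
\[
H_n^{\e}(t,x)-\overline{H}_n(t,x)=\int_{T_n(t)}\int_{\bR^{nd}}\prod_{i=1}^{n}G(t_{i+1}-t_i,x_{i+1}-x_i)\Big(\prod_{i=1}^{n}\dot{W}^{\e}(x_i)-\prod_{i=1}^{n}\dot{W}(x_i)\Big)d{\bf x}\,d{\bf t},
\]
with $t_{n+1}=t$, $x_{n+1}=x$, and apply Minkowski's integral inequality to bring $\|\cdot\|_p$ inside the deterministic integral.

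The heart of the matter is then a bound on $\big\|\prod_{i}\dot{W}^{\e}(x_i)-\prod_{i}\dot{W}(x_i)\big\|_p$ that is \emph{uniform in} $(x_1,\dots,x_n)$. For this I would use the telescoping identity
\[
\prod_{i=1}^{n}a_i-\prod_{i=1}^{n}b_i=\sum_{j=1}^{n}\Big(\prod_{i<j}a_i\Big)(a_j-b_j)\Big(\prod_{i>j}b_i\Big)
\]
with $a_i=\dot{W}^{\e}(x_i)$, $b_i=\dot{W}(x_i)$, followed by the generalized H\"older inequality in $L^{pn}(\Omega)$ applied to each of the $n$ terms. Since $\dot{W}^{\e}(x)=W(p_{\e}(x-\cdot))$, $\dot{W}(x)=W(A(x))$ and $\dot{W}^{\e}(x)-\dot{W}(x)=W(p_{\e}(x-\cdot)-A(x))$ are all centered Gaussian, each H\"older factor equals $z_{pn}^{1/(pn)}$ times an $\cH$-norm: $\|\dot{W}^{\e}(x_i)\|_{pn}\le z_{pn}^{1/(pn)}\mu(\bR^d)^{1/2}$ by \eqref{bound-We2}, $\|\dot{W}(x_i)\|_{pn}=z_{pn}^{1/(pn)}\mu(\bR^d)^{1/2}$, and $\|\dot{W}^{\e}(x_j)-\dot{W}(x_j)\|_{pn}\le z_{pn}^{1/(pn)}\delta_{\e}$, where $\delta_{\e}:=\sup_{x\in\bR^d}\|p_{\e}(x-\cdot)-A(x)\|_{\cH}\to0$ as $\e\downarrow0$ — which is precisely the uniform convergence established at the beginning of this section. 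Hence
\[
\Big\|\prod_{i=1}^{n}\dot{W}^{\e}(x_i)-\prod_{i=1}^{n}\dot{W}(x_i)\Big\|_p\le n\,z_{pn}^{1/p}\,\mu(\bR^d)^{(n-1)/2}\,\delta_{\e}=:C_{n,p}\,\delta_{\e},
\]
uniformly in $(x_1,\dots,x_n)$.

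Plugging this in, the remaining deterministic integral factors out, and using $\int_{\bR^d}G(t,\cdot)=t$ it equals $\int_{T_n(t)}\prod_{i=1}^{n}(t_{i+1}-t_i)\,d{\bf t}=t^{2n}/(2n)!$, so that
\[
\sup_{(t,x)\in[0,T]\times\bR^d}\|H_n^{\e}(t,x)-\overline{H}_n(t,x)\|_p\le C_{n,p}\,\delta_{\e}\,\frac{T^{2n}}{(2n)!}\to0\quad\text{as }\e\downarrow0,
\]
which is \eqref{vne-vn}. For the consequence, note that $\mu(\bR^d)<\infty$ trivially gives condition (C), and that by Fubini $H_n^{\e}(t,x)=\int_{(\bR^d)^n}f_n(x_1,\dots,x_n,x;t)\prod_{i=1}^{n}\dot{W}^{\e}(x_i)\,dx_1\cdots dx_n$; thus \eqref{vne-vn} says this approximating family converges in probability, so by definition of the multiple Stratonovich integral $I_n^{\circ}(f_n(\cdot,x;t))$ exists and equals $\overline{H}_n(t,x)$, i.e. $H_n(t,x)=\overline{H}_n(t,x)$ a.s.; summing over $k=1,\dots,n$ gives $v_n(t,x)=\overline{v}_n(t,x)$ a.s.

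\textbf{Main obstacle.} The one genuinely load-bearing step is the uniform-in-$(x_1,\dots,x_n)$ estimate for $\|\prod_i\dot{W}^{\e}(x_i)-\prod_i\dot{W}(x_i)\|_p$. It works here precisely because $\mu$ is finite: both the moment bounds on $\dot{W}^{\e}(x)$ and the $\cH$-convergence of $p_{\e}(x-\cdot)$ to $A(x)$ hold uniformly in $x$. In the general setting of Theorem \ref{th-conv-e} this breaks down — $\|\dot{W}^{\e}(x)\|_2^2=\int e^{-\e|\xi|^2}\mu(d\xi)$ diverges as $\e\downarrow0$ — which is why there one must instead work chaos by chaos, with the much more delicate Fourier/combinatorial estimates. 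Everything else in the present proof is routine bookkeeping.
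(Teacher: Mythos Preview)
Your proof is correct and follows essentially the same route as the paper: Minkowski's integral inequality to bring $\|\cdot\|_p$ inside, the telescoping identity for $\prod a_i-\prod b_i$, the generalized H\"older inequality in $L^{pn}(\Omega)$, and the uniform Gaussian moment bounds coming from $\mu(\bR^d)<\infty$. The only cosmetic differences are that the paper writes the product difference as $\sum_i a_1\cdots a_{i-1}(a_i-b_i)b_{i+1}\cdots b_n$ and finishes by invoking dominated convergence, whereas you extract the explicit uniform bound $C_{n,p}\,\delta_\e\,T^{2n}/(2n)!$; and for the ``consequently'' clause you spell out that \eqref{vne-vn} realizes the defining limit of $I_n^{\circ}\big(f_n(\cdot,x;t)\big)$, which the paper leaves implicit (it simply uses uniqueness of the $L^p$-limit of $H_n^{\e}$). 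Your remark that $\mu(\bR^d)<\infty$ implies condition (C) is true but not actually needed in your argument.
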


\begin{proof}
By Minkowski's inequality,
\begin{align*}
& \|H_n^{\e}(t,x)-\overline{H}_n(t,x)\|_p \leq \\
& \quad \quad \int_{T_n(t)} \int_{\bR^{nd}} \prod_{i=1}^{n}G(s_{i}-s_{i-1},y_{i}-y_{i-1}) \Big\|\prod_{i=1}^{n}\dot{W}^{\e}(x+y_i)-
\prod_{i=1}^{n}\dot{W}(x+y_i)\Big\|_p d{\bf y}d{\bf s},
\end{align*}
where $s_0=0$ and $y_0=0$.
Using the inequality
\[
\Big|\prod_{i=1}^{n}a_i-\prod_{i=1}^{n}b_i\Big| \leq \sum_{i=1}^{n}|a_1 \ldots a_{i-1}b_{i+1}\ldots b_n| |a_i-b_i|.
\]
for $a_1,b_1,\ldots,a_n,b_n \in \bR$, followed by the generalized H\"older inequality, we obtain:
\begin{align*}
& \bE\Big|\prod_{i=1}^{n}\dot{W}^{\e}(x+y_i)-
\prod_{i=1}^{n}\dot{W}(x+y_i)\Big|^p \leq \\
 & \quad   n^{p-1}\sum_{i=1}^{n} (\bE|\dot{W}^{\e}(x+y_1)|^{pn})^{1/n} \ldots (\bE|\dot{W}^{\e}(x+y_{i-1})|^{pn})^{1/n} (\bE|\dot{W}(x+y_{i+1})|^{pn})^{1/n}\ldots \\
 & \quad \quad \quad (\bE|\dot{W}(x+y_n)|^{pn})^{1/n} (\bE|\dot{W}^{\e}(x+y_i)-\dot{W}(x+y_i)|^{pn})^{1/n} \leq \\
& \quad
n^{p-1} z_{pn} \Big(\mu(\bR^d)\Big)^{\frac{p(n-1)}{2}} \sum_{i=1}^{n}
(\bE|\dot{W}^{\e}(x+y_i)-\dot{W}(x+y_i)|^{2})^{p/2}.
\end{align*}
Since the right-hand-side above converges to $0$ when $\e \downarrow 0$, uniformly in $x \in \bR^d$, the conclusion follows by the dominated convergence theorem.
\end{proof}

\begin{remark}
{\rm If $\mu(\bR^d)<\infty$, it is much easier to prove that $v$ is a Stratonovich solution.
To see this, recall decomposition \eqref{T-decomp}.
Then $\bE|L^{\e}(t,x)|\to 0$ as $\e \downarrow 0$, by the dominated convergence theorem. To justify the application of this theorem, we simply use the bound
\[
\bE|\big(v^{\e}(s,y)-v(s,y)\big)\dot{W}^{\e}(y)| \leq \Big(\bE|v^{\e}(s,y)-v(s,y)|^2\Big)^{1/2} \Big(\bE|\dot{W}^{\e}(y)|^2\Big)^{1/2}.
\]
The last term is bounded by a constant (which does not depend on $\e$) due to \eqref{bound-We2} and the uniform convergence in \eqref{ve-v-conv}.
}
\end{remark}

\begin{remark}
{\rm
If $\mu(\bR^d)<\infty$, $v$ satisfies also the equation:
\[
v(t,x)=1+\int_0^t \int_{\bR^d}G(t-s,x-y)v(s,y)\dot{W}(y)dyds.
\]
To see this, consider the recurrence relation \eqref{rec-vne}. Taking the limit as $\e \downarrow 0$ in $L^1(\Omega)$, we obtain:
\[
v_{n+1}(t,x)=1+\int_0^t \int_{\bR^d}G(t-s,x-y)v_n(s,y)\dot{W}(y)dyds.
\]
Now take the limit as $n \to \infty$ in $L^1(\Omega)$.
}
\end{remark}

\begin{remark}
{\rm If $\mu(\bR^d)<\infty$,
we have the following Feynman-Kac representation of $v$:
\[
v(t,x)=e^{t}\bE^{N,X}\Big[ \prod_{i=1}^{N_t}(\tau_i-\tau_{i-1}) \prod_{i=1}^{N_t}\dot{W}(X_{\tau_i}^x) \Big].
\]
which is proved similarly to \eqref{FK-ve}, replacing $\dot{W}^{\e}$ by $\dot{W}$.
}
\end{remark}

\section{Comparison with the Skorohod solution}
\label{section-comparison}

In this section, we compare the Stratonovich solution $v$ with the Skorohod solution $u$. 

\subsection{Chaos expansions}

In this section, we give the chaos expansions of $v(t,x)$ and $u(t,x)$.

Recall that $u$ satisfies equation \eqref{Skorohod}.
By Theorem 2.2 of \cite{BCC}, we know that under condition (D), $u$ exists and has the chaos expansion $u(t,x)=1+\sum_{n\geq 1}J_n(t,x)$, where $J_n(t,x)=I_n(f_n(\cdot,x;t))$ and $f_n(\cdot,x;t)$ is given by \eqref{def-fn}.
An important observation is that
\[
J_n(t,x)=H_{n,0,[n]}(t,x)=\int_{T_n(t)} \int_{(\bR^d)^n} \prod_{j=1}^{n}G(t_{j+1}-t_j,x_{j+1}-x_j) \prod_{j=1}^{n}W(dx_j)d{\bf t}.
\]
Recalling definition \eqref{def-Hn} of $H_n(t,x)$, we write $H_n(t,x)=J_n(t,x)+M_n(t,x)$, where
\[
M_n(t,x)=
\sum_{k=1}^{\lfloor n/2 \rfloor} \sum_{\substack{J \subset [n]\\ |J|=n-2k}} \sum_{\substack{\{I_1,\ldots,I_k\} \ {\rm partition} \ {\rm of} \ J^c \\ I_i=\{\ell_i,m_i\} \forall i=1,\ldots,k}} H_{n,k,J,I_1,\ldots,I_k}(t,x).
\]
Note that $M_1(t,x)=0$ since $H_1(t,x)=J_1(t,x)$. We have:
\[
v(t,x)=1+\sum_{n\geq 1}J_n(t,x)+\sum_{n\geq 1}M_n(t,x)=u(t,x)+\sum_{n\geq 1}M_n(t,x).
\]
The following table shows shows the composition of $M_n(t,x)$ in comparison with $J_n(t,x)$:
\begin{center}
\begin{tabular}{|c||c||l|c|c|}
\hline
$n$ & $J_n(t,x)$ & $M_n(t,x)$ & $k$ & $n-2k$ \\ \hline 
1 &  $\in \cH_1$ & 0 &  & \\ \hline 
2 &  $\in \cH_2$ & 1 term in $\cH_0$ & 1 & 0 \\ \hline
3 &  $\in \cH_3$ & 3 terms in $\cH_1$ & 1 & 1 \\ \hline
4 &  $\in \cH_4$ & 6 terms in $\cH_2$ & 1 & 2\\
& &  3 terms in $\cH_0$ & 2 & 0 \\ \hline
5 &  $\in \cH_5$ & 10 terms in $\cH_3$ & 1 & 3 \\
& &  15 terms in $\cH_1$ & 2 & 1\\ \hline
6 &  $\in \cH_6$ & 15 terms in $\cH_4$ & 1 & 4\\
& &   45 terms in $\cH_2$ & 2  & 2 \\
& &   15 terms in $\cH_0$ & 3 & 0 \\ \hline
\end{tabular}
\end{center}
In general, $M_n(t,x)$ contains $\binom{n}{2k} \cdot \frac{(2k)!}{2^k k!}$ terms in $\cH_{n-2k}$, for any $k=1,2,\ldots,\lfloor n/2 \rfloor$.

\medskip

The projection of $v(t,x)$ on $\cH_m$ is equal to $J_m(t,x)$ plus the sum of all terms in $\cH_m$ which appear in the 3rd column of the table above.
This shows that
the chaos expansion 
\[
v(t,x)=\sum_{m\geq 0}K_m(t,x) \quad \mbox{with} \quad K_m(t,x) \in \cH_m
\]
of $v(t,x)$ is much more complicated than that of $u(t,x)$. More precisely, \begin{align*}
K_0(t,x)&=1+\sum_{\substack{n\geq 1 \ {\rm even} \\ n/2=:k}} \sum_{\substack{\{I_1,\ldots,I_k\} \ {\rm partition} \ {\rm of} \ [n] \\ I_i=\{\ell_i,m_i\} \forall i=1,\ldots,k}} H_{n,k,\emptyset,I_1,\ldots,I_k}(t,x)=\bE\big(v(t,x)\big) \\
K_m(t,x)&=\sum_{\substack{n\geq 1,\ n-m \ {\rm even} \\ (n-m)/2=:k}} \sum_{\substack{J \subset [n]\\ |J|=m}} \sum_{\substack{\{I_1,\ldots,I_k\} \ {\rm partition} \ {\rm of} \ J^c \\ I_i=\{\ell_i,m_i\} \forall i=1,\ldots,k}} H_{n,k,J,I_1,\ldots,I_k}(t,x) \quad \mbox{for $m\geq 1$}.
\end{align*}
The term of the sum which corresponds to $n=m$ is $H_{m,0,[m]}(t,x)=J_m(t,x)$, for $m\geq 1$.

\subsection{Approximation for the Skorohod solution}

In this section, we present an approximation procedure for $u(t,x)$, which turns out to be quite different than the approximation of $v(t,x)$. This section is included only for the sake of comparison, and it is not needed for the results in this paper.

We consider the equation with mollified noise $\dot{W}^{\e}$ and Wick product $\diamond$:
\begin{align}
\label{approx-equ} 
\begin{cases}
\dfrac{\partial^2 u^{\e}}{\partial t^2} (t,x)=\Delta u^{\e}(t,x)+u^{\e}(t,x) \diamond \dot{W}^{\e}(x), \quad \quad t>0,x \in \bR^d \quad (d \leq 2) \\
u^{\e}(0,x) = 1, \quad \dfrac{\partial u^{\e}}{\partial t}(0,x)=0\end{cases}
\end{align}
Recall that $F \diamond W(h)=\delta(Fh)$ where $\delta$ is the divergence operator and $F$ is a random variable in $L^2(\Omega)$ which is Malliavin differentiable with respect to $W$ (see \cite{nualart06}).

\begin{definition}
\label{Def-ue}
{\rm A process $u^{\varepsilon}=\{u_{\e}(t,x);t\geq 0,x\in \bR^d\}$ is a {\bf solution} to \eqref{approx-equ} if it satisfies
\begin{equation}
\label{ue}
u^{\varepsilon}(t,x)=1+\int_0^t \int_{\bR^d}G(t-s,x-y)u^{\varepsilon}(s,y)\diamond \dot{W}^{\varepsilon}(y)dyds.
\end{equation}
}
\end{definition}

By the definition of the Wick product and stochastic Fubini theorem, \eqref{ue} is equivalent to
\[
u^{\e}(t,x)=1+\int_{\bR^d}\left(\int_{0}^t \int_{\bR^d} G(t-s,x-y) p_{\e}(y-z)u^{\e}(s,y)dydz \right) W(\delta z).
\]
We define the kernel
$$f_{n}^{\e}(y_1,\ldots,y_n,x;t)=\int_{T_n(t)} \int_{(\bR^d)^n}\prod_{i=1}^{n}G(t_{i+1}-t_i,x_{i+1}-x_i) \prod_{i=1}^{n}p_{\e}(x_i-y_i) d{\bf x} d{\bf t}.$$
It is not difficult to prove that $f_{n}^{\e}(\cdot,x;t) \in \cH^{\otimes n}$, using the fact that $f_{n}(\cdot,x;t) \in \cH^{\otimes n}$ and
\[
\cF f_{n}^{\e}(\cdot,x;t)(\xi_1,\ldots,\xi_n)=
\exp\left(-\frac{\varepsilon}{2}\sum_{j=1}^n|\xi_j|^2\right)\cF f_n(\cdot,x;t)(\xi_1,\ldots,\xi_n).
\]

\begin{lemma}
Under condition (D), for any $T>0$ and $p\geq 2$, the series
\begin{equation}
\label{e-series}
u^{\e}(t,x):=1+\sum_{n\geq 1}I_n(f_{n}^{\e}(\cdot,x;t)) \quad \mbox{converges in $L^p(\Omega)$}
\end{equation}
uniformly in $(t,x) \in [0,T] \times \bR^d$ and $\e>0$. Moreover,
$\{u^{\e}(t,x);t\geq 0,x\in \bR^d\}$ is the unique solution of equation \eqref{approx-equ}.
\end{lemma}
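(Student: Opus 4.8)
The plan is to separate the two assertions: the uniform‑in‑$\e$ convergence of the chaos series is essentially inherited from the $\e=0$ estimate in \cite{BCC}, while the identification of the limit as the \emph{unique} solution of \eqref{approx-equ} rests on a kernel recurrence together with a stochastic Fubini argument for the divergence operator.

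\textbf{Step 1: uniform bound on the kernels.} From the displayed identity $\cF f_n^{\e}(\cdot,x;t)(\xi_1,\ldots,\xi_n)=\exp\big(-\tfrac{\e}{2}\sum_{j=1}^n|\xi_j|^2\big)\cF f_n(\cdot,x;t)(\xi_1,\ldots,\xi_n)$ and the bound $\exp\big(-\tfrac{\e}{2}\sum_{j=1}^n|\xi_j|^2\big)\le 1$, and noting that this Fourier multiplier is symmetric in $(\xi_1,\ldots,\xi_n)$ so that symmetrization commutes with it, one gets
\[
\|\widetilde{f_n^{\e}}(\cdot,x;t)\|_{\cH^{\otimes n}}\le \|\widetilde{f_n}(\cdot,x;t)\|_{\cH^{\otimes n}} \quad \mbox{for all } \e>0 .
\]
By the isometry of $I_n$ and hypercontractivity on a fixed Wiener chaos, for $p\ge 2$,
\[
\|I_n(f_n^{\e}(\cdot,x;t))\|_p\le (p-1)^{n/2}\sqrt{n!}\,\|\widetilde{f_n^{\e}}(\cdot,x;t)\|_{\cH^{\otimes n}}\le (p-1)^{n/2}\sqrt{n!}\,\|\widetilde{f_n}(\cdot,x;t)\|_{\cH^{\otimes n}},
\]
a bound free of $\e$. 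Under condition (D), the estimate behind Theorem 2.2 of \cite{BCC} provides a summability of the form $\sum_{n\ge1}(p-1)^{n/2}\sqrt{n!}\,\sup_{(t,x)\in[0,T]\times\bR^d}\|\widetilde{f_n}(\cdot,x;t)\|_{\cH^{\otimes n}}<\infty$, whence $\sum_{n\ge1}\sup_{(t,x)\in[0,T]\times\bR^d}\|I_n(f_n^{\e}(\cdot,x;t))\|_p<\infty$ uniformly in $\e>0$. This proves \eqref{e-series}.

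\textbf{Step 2: $u^{\e}$ solves \eqref{approx-equ}.} By the definition of the Wick product and a stochastic Fubini theorem, \eqref{ue} is equivalent to the equation displayed just before the definition of $f_n^{\e}$. The key point is the kernel recurrence
\[
f_n^{\e}(\cdot,x;t)=\int_0^t\int_{\bR^d} G(t-s,x-y)\,\big[f_{n-1}^{\e}(\cdot,y;s)\otimes p_{\e}(y-\cdot)\big]\,dy\,ds ,
\]
which follows at once from the definition of $f_n^{\e}$ by splitting off the last time‑space variable $(t_n,x_n)$ and setting $s=t_n$, $y=x_n$. After symmetrization, this is precisely the relation the chaos kernels of any solution of \eqref{ue} must satisfy, since $I_m\big(g\big)\diamond W(p_{\e}(y-\cdot))=\delta\big(I_m(g)\,p_{\e}(y-\cdot)\big)=I_{m+1}\big(\widetilde{g\otimes p_{\e}(y-\cdot)}\big)$ and $I_n(f_n^{\e})=I_n(\widetilde{f_n^{\e}})$. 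Interchanging $\delta$ with the infinite sum and with the deterministic $dy\,ds$ integration — both justified by the $L^2(\Omega)$ bounds of Step 1 and a stochastic Fubini for Skorohod integrals, exactly as in \cite{BCC} — then shows that $u^{\e}=1+\sum_{n\ge1}I_n(f_n^{\e}(\cdot,x;t))$ satisfies \eqref{ue}.

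\textbf{Step 3: uniqueness, and the main obstacle.} If $\overline{u}^{\e}$ is any solution of \eqref{approx-equ} with $\sup_{(t,x)\in[0,T]\times\bR^d}\bE|\overline{u}^{\e}(t,x)|^2<\infty$, then matching chaos in \eqref{ue} forces its kernels $\overline{f}_n^{\e}(\cdot,x;t)$ to satisfy the same recurrence as above with $\overline{f}_0^{\e}\equiv1$, so by induction $\overline{f}_n^{\e}=f_n^{\e}$ for every $n$, i.e.\ $\overline{u}^{\e}=u^{\e}$; alternatively, $w=u^{\e}-\overline{u}^{\e}$ solves the homogeneous equation and a Gronwall estimate for $\bE|w(t,x)|^2$ using $\|p_{\e}(y-\cdot)\|_{\cH}^2\le C_{\mu,\e}$ forces $w\equiv0$. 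The uniform‑in‑$\e$ convergence is the easy part, precisely because the Gaussian multiplier is pointwise $\le 1$ and reduces everything to the already‑available $\e=0$ bounds; the genuine work is the ``moreover'' statement, namely the careful verification of the kernel recurrence and the justification of the interchange of the divergence operator with the infinite chaos sum and the $dy\,ds$ integral (the stochastic Fubini theorem for Skorohod integrals), from which uniqueness then follows.
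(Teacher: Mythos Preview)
Your proposal is correct and follows essentially the same route as the paper. Step~1 is identical (the Gaussian Fourier multiplier is $\le 1$, so $\|\widetilde{f_n^{\e}}\|_{\cH^{\otimes n}}\le \|\widetilde{f_n}\|_{\cH^{\otimes n}}$, and convergence is inherited from the $\e=0$ case via hypercontractivity); in Step~2 the paper phrases the same argument slightly differently, by computing the chaos expansion of $u^{\e,t,x}(z):=\int_0^t\int_{\bR^d}G(t-s,x-y)p_{\e}(y-z)u^{\e}(s,y)\,dy\,ds$ (which is precisely your kernel recurrence) and then applying the analogue of Proposition~1.3.7 of \cite{nualart06} to get $\delta(u^{\e,t,x})=u^{\e}(t,x)-1$, while uniqueness is dismissed as ``classical'' just as you do.
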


\begin{proof} 
Note that
$$\cF \widetilde{f}_{n}^{\e}(\cdot,x;t)(\xi_1,\ldots,\xi_n)=
\exp\left(-\frac{\e}{2}\sum_{j=1}^n|\xi_j|^2\right)\cF \widetilde{f}_n(\cdot,x;t)(\xi_1,\ldots,\xi_n).$$
Hence $\|\widetilde{f}_{n,\varepsilon}(\cdot,x;t)\|_{\cH^{\otimes n}} \leq \|\widetilde{f}_{n}(\cdot,x;t)\|_{\cH^{\otimes n}}$ and the
$L^2(\Omega)$-convergence of the series \eqref{e-series} (uniform in $(t,x)$) follows from the uniform convergence of $\sum_{n\geq 0} n! \|\widetilde{f}_{n}(\cdot,x;t)\|_{\cH^{\otimes n}}^2$. The $L^p(\Omega)$-convergence follows by hypercontractivity.

Next, we prove that $u^{\e}$ is a solution of \eqref{approx-equ}. We have to show that
$u^{\e,t,x} \in {\rm Dom} \, \delta$ and 
$\delta(u^{\e,t,x})=u^{\e}(t,x)-1$, where
$$u^{\e,t,x}(z)=\int_0^t \int_{\bR^d} G(t-s,x-y)p_{\e}(y-z)u^{\e}(s,y)dyds.$$


First, we deduce the chaos expansion of $u^{\e,t,x}(z)$:
\begin{align*}
u^{\e,t,x}(z)&= \int_0^t \int_{\bR^d} G(t-s,x-y) p_{\e}(y-z) \Big(\sum_{n\geq 0} I_n(f_{n}^{\e}(\cdot,y;s)) \Big)dyds\\
&= \sum_{n\geq 0}I_n\left(\int_0^t \int_{\bR^d} G(t-s,x-y) p_{\e}(y-z) f_{n}^{\e}(\cdot,y;s) dy ds\right)\\
&=\sum_{n\geq 0}I_{n}\big(f_{n+1}^{\e}(\cdot,z,x;t)\big).
\end{align*}
Using an analogue of Proposition 1.3.7 of \cite{nualart06},
we infer that
$u^{\e,t,x} \in {\rm Dom}\, \delta$ and $$\delta(u^{\e,t,x})=\sum_{n\geq 0}I_{n+1}\big(f_{n+1}^{\e}(\cdot,x;t)\big)=u^{\e}(t,x)-1.$$
Uniqueness follows by classical arguments.
\end{proof}

\begin{lemma}
\label{u-ue-approx}
Under condition (D), for any $T>0$ and $p\geq 2$,
$$\sup_{(t,x) \in [0,T] \times \bR^d}\bE|u^{\e}(t,x)-u(t,x)|^p \to 0 \quad \mbox{as} \quad \e \downarrow 0.$$
\end{lemma}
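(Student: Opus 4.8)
The plan is to argue entirely at the level of chaos expansions and to reduce the statement to a dominated‑convergence argument over the Wiener chaoses. Recall from Theorem~2.2 of \cite{BCC} that $u(t,x)=1+\sum_{n\geq1}I_n(f_n(\cdot,x;t))$, while $u^{\e}(t,x)=1+\sum_{n\geq1}I_n(f_n^{\e}(\cdot,x;t))$ with $\cF f_n^{\e}(\cdot,x;t)(\xi_1,\ldots,\xi_n)=\exp(-\tfrac{\e}{2}\sum_{j=1}^n|\xi_j|^2)\,\cF f_n(\cdot,x;t)(\xi_1,\ldots,\xi_n)$. Hence $u^{\e}(t,x)-u(t,x)=\sum_{n\geq1}I_n(g_n^{\e}(\cdot,x;t))$, where $g_n^{\e}(\cdot,x;t):=f_n^{\e}(\cdot,x;t)-f_n(\cdot,x;t)$ has Fourier transform $\big(\exp(-\tfrac{\e}{2}\sum_{j=1}^n|\xi_j|^2)-1\big)\cF f_n(\cdot,x;t)$. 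By Minkowski's inequality in $L^p(\Omega)$, hypercontractivity, and $\|\widetilde{g}\|_{\cH^{\otimes n}}\leq\|g\|_{\cH^{\otimes n}}$,
\[
\|u^{\e}(t,x)-u(t,x)\|_p\leq\sum_{n\geq1}(p-1)^{n/2}\sqrt{n!}\,\|g_n^{\e}(\cdot,x;t)\|_{\cH^{\otimes n}},
\]
so it suffices to show that this series tends to $0$ as $\e\downarrow0$, uniformly in $(t,x)\in[0,T]\times\bR^d$.

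The first step is to produce a summable majorant uniform in $\e$ and $(t,x)$. From the Fourier computation in Step~3.(b) of the proof of Theorem~\ref{th-conv-e} (specialized to $k=0$, $J=[n]$), $|\cF f_n(\cdot,x;t)(\xi_1,\ldots,\xi_n)|=\big|\int_{T_n(t)}\prod_{j=1}^n\cF G(t_{j+1}-t_j,\cdot)(\xi_1+\cdots+\xi_j)\,d{\bf t}\big|$, which does not depend on $x$. Since $t\mapsto C_t=2\sqrt2(t\vee1)$ is nondecreasing, \eqref{FG-bound} gives $\sup_{0\leq s\leq T}|\cF G(s,\cdot)(\eta)|\leq C_T(1+|\eta|^2)^{-1/2}$, whence, for all $t\in[0,T]$ and $x\in\bR^d$,
\[
|\cF f_n(\cdot,x;t)(\xi_1,\ldots,\xi_n)|^2\leq C_T^{2n}\frac{T^{2n}}{(n!)^2}\prod_{j=1}^n\frac{1}{1+|\xi_1+\cdots+\xi_j|^2}=:D_n(\xi_1,\ldots,\xi_n).
\]
By Lemma~4.1 of \cite{balan-song}, $\int_{(\bR^d)^n}D_n\,\prod_{j=1}^n\mu(d\xi_j)\leq C_T^{2n}\frac{T^{2n}}{(n!)^2}\kappa^n$, where $\kappa:=\sup_{z\in\bR^d}\int_{\bR^d}(1+|z+\xi|^2)^{-1}\mu(d\xi)<\infty$ under condition (D). In particular $\sup_{t\leq T,\,x}\|f_n(\cdot,x;t)\|_{\cH^{\otimes n}}^2\leq C_T^{2n}\frac{T^{2n}}{(n!)^2}\kappa^n$, so $(p-1)^{n/2}\sqrt{n!}\sup_{t\leq T,\,x}\|f_n(\cdot,x;t)\|_{\cH^{\otimes n}}\leq ((p-1)C_T^2T^2\kappa)^{n/2}/\sqrt{n!}$ is summable; since $|e^{-a}-1|\leq1$ for $a\geq0$ forces $\|g_n^{\e}(\cdot,x;t)\|_{\cH^{\otimes n}}\leq\|f_n(\cdot,x;t)\|_{\cH^{\otimes n}}$, this is the desired majorant for the series above.

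The second step is term‑by‑term convergence: for each fixed $n$,
\[
\sup_{t\leq T,\,x\in\bR^d}\|g_n^{\e}(\cdot,x;t)\|_{\cH^{\otimes n}}^2\leq\int_{(\bR^d)^n}\Big(e^{-\frac{\e}{2}\sum_{j=1}^n|\xi_j|^2}-1\Big)^2 D_n(\xi_1,\ldots,\xi_n)\prod_{j=1}^n\mu(d\xi_j)\longrightarrow0\quad\text{as }\e\downarrow0,
\]
by dominated convergence, the integrand being bounded by $D_n\in L^1(\mu^{\otimes n})$ and converging pointwise to $0$. Combining this with the uniform summable majorant from the first step, a second application of dominated convergence — now to the series in $n$ — yields $\sup_{(t,x)\in[0,T]\times\bR^d}\|u^{\e}(t,x)-u(t,x)\|_p\to0$, which is the assertion.

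The only mildly delicate point is that the convergence must be uniform in $(t,x)$, which is why in the first step we insist on an $L^1(\mu^{\otimes n})$‑majorant $D_n$ for $|\cF f_n(\cdot,x;t)|^2$ valid simultaneously for all $t\in[0,T]$ and all $x\in\bR^d$; the monotonicity of $C_t$ in \eqref{FG-bound} together with Lemma~4.1 of \cite{balan-song} delivers exactly this, and the independence of $|\cF f_n(\cdot,x;t)|$ from $x$ removes the $x$‑dependence altogether. The remaining estimates are routine and parallel those already carried out in Sections~\ref{section-e-conv} and \ref{section-n-conv}.
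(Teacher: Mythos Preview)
Your proof is correct and follows essentially the same approach as the paper's: both argue via the chaos expansions, use the Fourier relation $\cF g_n^{\e}=(e^{-\frac{\e}{2}\sum|\xi_j|^2}-1)\cF f_n$, and apply dominated convergence twice (first for each $\|g_n^{\e}\|_{\cH^{\otimes n}}$, then for the series in $n$). The only cosmetic differences are that the paper first reduces to $p=2$ by hypercontractivity and works with the symmetrized kernels, invoking the known bound $\sup_{(t,x)}\bE|u(t,x)|^2<\infty$ as the summable majorant, whereas you handle general $p$ directly via Minkowski plus hypercontractivity and reconstruct the majorant explicitly from \eqref{FG-bound} and Lemma~4.1 of \cite{balan-song}.
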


\begin{proof}
By hypercontractivity, it suffices to consider the case $p=2$. 
Note that
$u(t,x)-u^{\e}(t,x)=\sum_{n\geq 1} I_n(h_{n}^{\e}(\cdot,x;t))$,
where $h_{n}^{\e}(\cdot,x;t)=f_{n}(\cdot,x;t)-f_{n}^{\e}(\cdot,x;t)$,
and 
$$\cF \widetilde{h}_{n}^{\e}(\cdot,x;t)(\xi_1,\ldots,\xi_n)=\left[1-
\exp\left(-\frac{\e}{2}\sum_{j=1}^n|\xi_j|^2\right)\right]\cF \widetilde{f}_n(\cdot,x;t)(\xi_1,\ldots\xi_n).$$
By the dominated convergence theorem,
$$\|\widetilde{h}_{n}^{\e}(\cdot,x;t)\|_{\cH^{\otimes n}}^2=
\int_{\bR^d}|\cF \widetilde{h}_{n}^{\e}(\cdot,x;t)(\xi_1,\ldots,\xi_n)|^2 \mu(d\xi_1)\ldots \mu(d\xi_n) \to 0 \ \mbox{as} \ \varepsilon \to 0.$$
Moreover,
$\|\widetilde{h}_{n}^{\e}(\cdot,x;t)\|_{\cH^{\otimes n}} \leq \|\widetilde{f}_{n}(\cdot,x;t)\|_{\cH^{\otimes n}}$.
Note that
\begin{align*}
\bE|u(t,x)-u^{\e}(t,x)|^2 & 
=\sum_{n\geq 1} n! \|\widetilde{h}_{n}^{\e}(\cdot,x;t)\|_{\cH^{\otimes n}}^2.
\end{align*}
and the series converges to $0$ as $\e \downarrow 0$, uniformly in $(t,x) \in [0,T] \times \bR^d$, using the dominated convergence theorem and the fact that $\sup_{(t,x)\in [0,T] \times \bR^d}\bE|u(t,x)|^2<\infty$.
\end{proof}

\vspace{3mm}

\appendix

\section{Parseval-type identities}

In this section, we give two Parseval-type identities which are used in this paper.

\begin{lemma}
\label{energy-lemma}
Let $d\geq 1$ be arbitrary and $\gamma:\bR^d \to [0,\infty]$ be non-negative-definite function satisfying Assumption A. Let $\mu$ be the tempered measure on $\bR^d$ such that $\gamma=\cF \mu$.
If $\varphi$ is a non-negative integrable function on $\bR^d$ such that 
$\int_{\bR^{d}}|\cF \varphi(\xi)|^2 \mu(d\xi)<\infty$, then
\[
\int_{\bR^d} \varphi(x)\gamma(x)dx=\int_{\bR^d}\cF \varphi(\xi)\mu(d\xi).
\]
\end{lemma}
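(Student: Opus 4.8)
The plan is to prove the identity first for a mollified/truncated version of $\gamma$, where everything is a bona fide smooth function, and then pass to the limit using Assumption A to control the singularity at the origin. First I would regularize: replace $\gamma$ by $\gamma_\delta := \gamma * p_\delta$, which corresponds on the spectral side to the finite measure $\mu_\delta(d\xi) = e^{-\delta|\xi|^2}\mu(d\xi)$ (so $\gamma_\delta = \mathcal{F}\mu_\delta$ pointwise, since $\mu_\delta$ is a finite measure and $\gamma_\delta$ is a bounded continuous function). For $\gamma_\delta$ the classical Parseval relation gives
\[
\int_{\bR^d}\varphi(x)\gamma_\delta(x)\,dx=\int_{\bR^d}\mathcal{F}\varphi(\xi)\,\mu_\delta(d\xi),
\]
which is just Fubini applied to $\int\int \varphi(x)e^{-i\xi\cdot x}e^{-\delta|\xi|^2}\,dx\,\mu(d\xi)$, valid because $\varphi\in L^1$ and $\mu_\delta$ is finite.

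Next I would let $\delta\downarrow 0$ on both sides. On the right-hand side, $0\le e^{-\delta|\xi|^2}\uparrow 1$, so monotone convergence (after splitting $\mathcal{F}\varphi$ into real/imaginary and positive/negative parts, or more cleanly: since $\varphi\ge 0$ is integrable we have $|\mathcal{F}\varphi(\xi)|\le\mathcal{F}\varphi(0)<\infty$ and $\mathcal{F}\varphi(\xi)$ is real when $\varphi$ is symmetrized — but in general use dominated convergence with the dominating function $|\mathcal{F}\varphi|$, which is $\mu$-integrable by Cauchy–Schwarz from the hypothesis $\int|\mathcal{F}\varphi|^2\,\mu(d\xi)<\infty$ together with $\mu$ being tempered — actually the cleanest route is: $\int|\mathcal{F}\varphi|\,d\mu_\delta \le (\int|\mathcal{F}\varphi|^2 d\mu)^{1/2}\mu_\delta(\bR^d)^{1/2}$ is not quite enough since $\mu(\bR^d)$ may be infinite, so instead dominate $|\mathcal{F}\varphi(\xi)e^{-\delta|\xi|^2}|\le |\mathcal{F}\varphi(\xi)|$ and check $|\mathcal{F}\varphi|\in L^1(\mu)$ directly). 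For the left-hand side, $\gamma_\delta(x)=(p_\delta*\gamma)(x)\to\gamma(x)$ as $\delta\downarrow 0$ for every $x\neq 0$ by continuity of $\gamma$ on $\bR^d\setminus\{0\}$ (Assumption A(a)), and since $\varphi\in L^1$ the set $\{0\}$ is $\varphi(x)dx$-null; I then need a dominated-convergence argument for $\int\varphi\gamma_\delta$.

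The main obstacle is exactly this last point: controlling $\int_{\bR^d}\varphi(x)\gamma_\delta(x)\,dx$ uniformly in $\delta$ near the origin, where $\gamma$ blows up. Here I would use that $\gamma\ge 0$ and $\gamma_\delta = p_\delta * \gamma \ge 0$, and instead of dominating $\gamma_\delta$ I would argue by Fatou on the left together with the already-established convergence on the right: from the $\gamma_\delta$ identity, $\int\varphi\gamma_\delta\,dx = \int\mathcal{F}\varphi\,d\mu_\delta \to \int\mathcal{F}\varphi\,d\mu < \infty$, so $\liminf_\delta\int\varphi\gamma_\delta\,dx$ is finite, and Fatou gives $\int\varphi\gamma\,dx\le\int\mathcal{F}\varphi\,d\mu$. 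For the reverse inequality I would use that $p_\delta*\gamma$ is itself nonnegative-definite and, by a second mollification or by Jensen/convexity, $\int\varphi\,(p_\delta*\gamma)\,dx$ can be bounded below; alternatively, and more robustly, note $\int\varphi\gamma_\delta\,dx = \int (\varphi*p_\delta)\gamma\,dx$ (moving the mollifier onto $\varphi$, using symmetry of $p_\delta$), and since $\varphi*p_\delta \to \varphi$ in $L^1$ with $\varphi*p_\delta \ge 0$, another application of Fatou in the other direction — combined with the fact that $\gamma$ is lower semicontinuous as it is continuous with values in $[0,\infty]$ — yields $\int\varphi\gamma\,dx\ge\limsup_\delta\int(\varphi*p_\delta)\gamma\,dx$. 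Pinning down which of these two Fatou arguments needs the lower semicontinuity of $\gamma$ and which needs pointwise a.e. convergence is the delicate bookkeeping; once both inequalities are in hand the identity follows. Finally, I would double-check the borderline integrability claim $|\mathcal{F}\varphi|\in L^1(\mu)$: since $|\mathcal{F}\varphi(\xi)|\le C(1+|\xi|^2)^{-1/2}\cdot(1+|\xi|^2)^{1/2}|\mathcal{F}\varphi(\xi)|$ and $(1+|\xi|^2)^{1/2}|\mathcal{F}\varphi(\xi)|$ need not be bounded, the honest estimate is Cauchy–Schwarz against $\int(1+|\xi|^2)^{-1}\mu(d\xi)$, which is condition (D) — implied by (C) — so the hypothesis of the lemma together with Dalang's condition gives $\int|\mathcal{F}\varphi|\,d\mu\le(\int|\mathcal{F}\varphi|^2 d\mu)^{1/2}(\int(1+|\xi|^2)^{-1}d\mu)^{1/2}<\infty$, which is exactly what makes the dominated convergence on the spectral side legitimate.
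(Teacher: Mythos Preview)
Your approach differs substantially from the paper's. The paper gives a two-line proof: it invokes relation (5.37) of \cite{KX09}, the bilinear energy identity
\[
\int_{\bR^d}\int_{\bR^d}\gamma(x-y)\,\nu_1(dx)\,\nu_2(dy)=\int_{\bR^d}\cF\nu_1(\xi)\,\overline{\cF\nu_2(\xi)}\,\mu(d\xi)
\]
for finite measures $\nu_1,\nu_2$ with finite $\mu$-energy, and then simply specializes to $\nu_1(dx)=\varphi(x)\,dx$ and $\nu_2=\delta_0$. Your mollification route is more self-contained and is close in spirit to how such identities are proved from scratch, but two steps are not nailed down.

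First, the Cauchy--Schwarz you write for $|\cF\varphi|\in L^1(\mu)$ is incorrect: splitting $|\cF\varphi|=|\cF\varphi|(1+|\xi|^2)^{1/2}\cdot(1+|\xi|^2)^{-1/2}$ and applying Cauchy--Schwarz produces a factor $\bigl(\int|\cF\varphi|^2(1+|\xi|^2)\,d\mu\bigr)^{1/2}$, which the hypothesis $\int|\cF\varphi|^2\,d\mu<\infty$ does \emph{not} control; and condition (D) is not among the lemma's hypotheses, so you are importing an extra assumption. Without $|\cF\varphi|\in L^1(\mu)$ your dominated-convergence step on the spectral side is unjustified. Second, the ``Fatou in the other direction'' is not a valid inference: Fatou applied to the nonnegative functions $(\varphi*p_\delta)\gamma$ yields $\int\varphi\gamma\le\liminf_\delta\int(\varphi*p_\delta)\gamma$, the \emph{same} direction as your first use, not the reverse; a genuine reverse-Fatou bound would require an integrable majorant for $(\varphi*p_\delta)\gamma$, which you do not supply, and lower semicontinuity of $\gamma$ does not produce one. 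So the matching pair of inequalities is not established. The paper sidesteps both issues by outsourcing the limiting argument to \cite{KX09}.
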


\begin{proof} By relation (5.37) of \cite{KX09},
\[
\int_{\bR^d} \int_{\bR^d}\gamma(x-y)\nu_1(dx)\nu_2(dy)=\int_{\bR^d}\cF \nu_1(\xi)\overline{\cF \nu_2(\xi)}\mu(d\xi),
\]
 for any finite measures $\nu_1,\nu_2$ on $\bR^d$ such that 
 $\int_{\bR^d}|\cF \nu_i(\xi)|^2 \mu(d\xi)<\infty$ for $i=1,2$. The conclusion follows taking $\nu_1(dx)=\varphi(x)dx$ and $\nu_2=\delta_0$.
\end{proof}

\begin{lemma}
\label{lemmaF}
Let $\gamma$ and $\mu$ be as in Lemma \ref{energy-lemma}. If $\varphi$ is a non-negative integrable function on $(\bR^d)^{2k}$ and $\{I_1,\ldots,I_k\}$ is a partition of $\{1,\ldots,2k\}$ with $I_i=\{\ell_i,m_i\}$ for $i=1,\ldots,k$, then
\begin{align*}
& \int_{(\bR^d)^{2k}} \varphi(x_1,\ldots,x_{2k}) \prod_{i=1}^{k}\gamma(x_{\ell_i}-x_{m_i})dx_1 \ldots dx_{2k}=\\
& \quad \int_{(\bR^d)^k} \cF \varphi(\xi_1,\ldots,\xi_{2k}) \prod_{i=1}^{k}1_{\{\xi_{\ell_i}=-\xi_{m_i}=\eta_i\}}\mu(d\eta_1) \ldots \mu(d\eta_k),
\end{align*}
provided that the integral on the right-hand-side is finite.
\end{lemma}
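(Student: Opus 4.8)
The plan is to reduce the statement to Lemma~\ref{energy-lemma}, read in dimension $kd$, by a change of variables. Since both sides of the asserted identity are unchanged if one permutes the coordinates $x_1,\dots,x_{2k}$ and the corresponding frequencies $\xi_1,\dots,\xi_{2k}$ by the same permutation, we may relabel so that $I_i=\{2i-1,2i\}$ for $i=1,\dots,k$. Then the unimodular linear substitution $y_i=x_{2i-1}-x_{2i}$, $v_i=x_{2i}$ ($i=1,\dots,k$), together with Tonelli's theorem (legitimate since $\varphi,\gamma\ge 0$), turns the left-hand side into $\int_{(\bR^d)^k}\psi(y_1,\dots,y_k)\prod_{i=1}^{k}\gamma(y_i)\,dy_1\cdots dy_k$, where
\[
\psi(y_1,\dots,y_k):=\int_{(\bR^d)^k}\varphi(y_1+v_1,v_1,\dots,y_k+v_k,v_k)\,dv_1\cdots dv_k
\]
is a non-negative integrable function on $(\bR^d)^k$ with $\|\psi\|_{L^1}=\|\varphi\|_{L^1}$. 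A direct computation of its Fourier transform, undoing the substitution inside the integral, gives $\cF\psi(\eta_1,\dots,\eta_k)=\cF\varphi(\eta_1,-\eta_1,\dots,\eta_k,-\eta_k)$; hence the right-hand side of the lemma is precisely $\int_{(\bR^d)^k}\cF\psi(\eta)\,\mu(d\eta_1)\cdots\mu(d\eta_k)$, and its assumed finiteness is the statement that $\int_{(\bR^d)^k}|\cF\psi(\eta)|\,\mu(d\eta_1)\cdots\mu(d\eta_k)<\infty$. It therefore suffices to prove the ``$k$-fold'' Parseval identity
\[
\int_{(\bR^d)^k}\psi(y)\prod_{i=1}^{k}\gamma(y_i)\,dy=\int_{(\bR^d)^k}\cF\psi(\eta)\prod_{i=1}^{k}\mu(d\eta_i).
\]

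To prove it, set $\Gamma:=\gamma^{\otimes k}$ and $M:=\mu^{\otimes k}$. Then $\Gamma$ is a non-negative-definite function on $\bR^{kd}$, $M$ is a tempered measure on $\bR^{kd}$ (temperedness of $M$ follows from that of $\mu$ by an elementary AM--GM estimate on $(1+|\eta|^2)^{-N}$), and $\Gamma=\cF M$ in the sense of distributions on $\bR^{kd}$. One now repeats verbatim the argument in the proof of Lemma~\ref{energy-lemma}: apply relation~(5.37) of \cite{KX09} in dimension $kd$ to the finite measures $\nu_1(dy)=\psi(y)\,dy$ and $\nu_2=\delta_0$, which yields $\int\Gamma(y)\psi(y)\,dy=\int\cF\psi(\eta)\,M(d\eta)$, i.e. the claimed identity. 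The only hypothesis that must be checked is $\int|\cF\psi(\eta)|^2\,M(d\eta)<\infty$, and this holds because $\int|\cF\psi(\eta)|\,M(d\eta)<\infty$ by assumption while $|\cF\psi(\eta)|\le\cF\psi(0)=\|\psi\|_{L^1}$ for the non-negative integrable $\psi$, so that $\int|\cF\psi(\eta)|^2\,M(d\eta)\le\|\psi\|_{L^1}\int|\cF\psi(\eta)|\,M(d\eta)<\infty$.

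The conceptual content is thin; the work is in the bookkeeping. Concretely one must (i) carry out the permutation and change-of-variables reduction carefully, tracking how the partition $\{I_1,\dots,I_k\}$ and the constraint $\xi_{\ell_i}=-\xi_{m_i}=\eta_i$ transform; (ii) verify the Fourier-transform identity for $\psi$ and the temperedness of $\mu^{\otimes k}$; and (iii) invoke \cite{KX09} with $\nu_2=\delta_0$, which is the same (mild) point already present in the proof of Lemma~\ref{energy-lemma}. The one place where a genuinely new idea enters is the elementary estimate $|\cF\psi(\eta)|\le\|\psi\|_{L^1}$, which upgrades the assumed first-moment integrability $\int|\cF\psi|\,dM$ (the hypothesis ``the right-hand side is finite'') to the second-moment integrability $\int|\cF\psi|^2\,dM<\infty$ needed to apply the energy identity of \cite{KX09}; without the non-negativity of $\varphi$ (hence of $\psi$) this step would fail, which is why the hypothesis $\varphi\ge0$ is essential. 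An equivalent route, avoiding the direct appeal to the $kd$-dimensional identity, is an induction on $k$ that peels off one factor $\gamma(y_i)$ at a time and applies Lemma~\ref{energy-lemma} slicewise, using the same $L^1\to L^2$ upgrade on each slice.
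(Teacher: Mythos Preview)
Your argument is correct. The paper proves the identity by the induction route you mention at the very end: after the same reduction to $I_i=\{2i-1,2i\}$, it peels off one pair $(x_k,x_k')$ at a time, applying Lemma~\ref{energy-lemma} in dimension $d$ to the slice $y\mapsto\varphi(x_1,x_1',\dots,x_{k-1},x_{k-1}',x_k,x_k+y)$ and then invoking the induction hypothesis. Your primary route is different: you collapse everything to a single application of the energy identity in dimension $kd$ for $\Gamma=\gamma^{\otimes k}$ and $M=\mu^{\otimes k}$. This is cleaner, and your explicit $L^1\to L^2$ upgrade $\int|\cF\psi|^2\,dM\le\|\psi\|_{L^1}\int|\cF\psi|\,dM$ (using $\psi\ge0$) makes transparent how the hypothesis ``the right-hand side is finite'' feeds into the $L^2(M)$ condition that the energy identity needs; the paper's induction uses the same idea implicitly but never spells it out.

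One small caveat on your main route: $\Gamma=\gamma^{\otimes k}$ does \emph{not} satisfy Assumption~A(a) on $\bR^{kd}$, because $\Gamma(y)=\infty$ whenever any single $y_i=0$, not only at the origin. So you cannot literally invoke Lemma~\ref{energy-lemma} as stated; you must go back to relation~(5.37) of \cite{KX09} and check that its hypotheses (non-negative-definiteness of $\Gamma$, temperedness of $M$, $\Gamma=\cF M$) are met, which they are. This is exactly what you say you are doing, but it is worth flagging that the tensor product genuinely lies outside the scope of Lemma~\ref{energy-lemma} as formulated. The induction route---which is the paper's proof---sidesteps this entirely by only ever invoking Lemma~\ref{energy-lemma} in dimension $d$, where Assumption~A holds by hypothesis.
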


\begin{proof} Without loss of generality, we will assume that $I_i=\{2i-1,2i\}$ for all $i=1,\ldots,k$.
(If not, we use the change of variables $y_{1}=x_{\ell_1},y_{2}=x_{m_1},\ldots,y_{2k-1}=x_{\ell_k},y_{2k}=x_{m_k}$ and we let $\rho$ be the permutation of $\{1,\ldots,2k\}$ given by $\ell_1=\rho(1),m_1=\rho(2),\ldots,\ell_k=\rho(2k-1),m_k=\rho(2k)$. Then $y_i=x_{\rho(i)}$ for $i=1,\ldots,2k$, and
\[
\int_{(\bR^d)^{2k}}\varphi(x_1,\ldots,x_{2k})
\prod_{i=1}^{k}\gamma(x_{\ell_i}-x_{m_i}) d{\bf x}=\int_{(\bR^d)^{2k}} \varphi(y_{\rho^{-1}(1)},\ldots y_{\rho^{-1}(2k)}) \prod_{i=1}^{k}\gamma(y_{2i-1}-y_{2i})d{\bf y},
\]
where ${\bf x}=(x_1,\ldots,x_{2k})$ and ${\bf y}=(y_1,\ldots,y_{2k})$.)
We have to prove that
\[
\int_{(\bR^d)^{2k}} \varphi(x_1,\ldots,x_{2k}) \prod_{i=1}^{k}\gamma(x_{2i-1}-x_{2i})d{\bf x}=\int_{(\bR^d)^k} \cF \varphi(\xi_1,\ldots,\xi_{2k}) \prod_{i=1}^{k}1_{\{\xi_{2i-1}=-\xi_{2i}=\eta_i\}} \prod_{i=1}^{k}\mu(d\eta_i),
\]
which we will re-write as:
\begin{equation}
\label{phi-induction}
\int_{(\bR^d)^k}\varphi(x_1,x_1',\ldots,x_k,x_k') \prod_{i=1}^{k}\gamma(x_i-x_i')d{\bf x} d{\bf x'}=\int_{(\bR^d)^k} \cF \varphi(\xi_1,-\xi_1,\ldots,\xi_k,-\xi_k) \prod_{i=1}^{k}\mu(d\xi_i),
\end{equation}
with ${\bf x}=(x_1,\ldots,x_{k})$ and ${\bf x}'=(x_1',\ldots,x_{k}')$.

We prove \eqref{phi-induction} by induction on $k\geq 1$.
If $k=1$, then by Lemma \ref{energy-lemma}, we have:
\begin{align*}
& \int_{(\bR^d)^2}\varphi(x_1,x_1')\gamma(x_1-x_1')dx_1 dx_1' =\int_{\bR^d}
\left( \int_{\bR^d} \varphi(x_1,x_1+y_1) \gamma(y_1)dy_1\right) dx_1\\
& \quad \quad \quad =\int_{\bR^d} \left( \int_{\bR^d} \cF \varphi(x_1,x_1+\cdot)(\xi_1)\mu(d\xi_1)\right) dx_1\\
& \quad \quad \quad =\int_{\bR^d}\int_{\bR^d}\left( \int_{\bR^d} e^{-i \xi_1 \cdot (x_1'-x_1)} \varphi(x_1,x_1') dx_1'\right) \mu(d\xi_1)dx_1\\
&\quad \quad \quad =\int_{\bR^d}\left( \int_{\bR^d}\int_{\bR^d} e^{-i \xi_1 \cdot (x_1-x_1')} \varphi(x_1,x_1') dx_1dx_1' \right)  \mu(d\xi_1)\\
&\quad \quad \quad =\int_{\bR^d} \cF \varphi(\xi_1,-\xi_1)  \mu(d\xi_1),
\end{align*}
where for the second last line we used the fact that $\mu$ is symmetric. Suppose now that the statement holds for $k$. By applying the result for $k=1$, we obtain:
\begin{align*}
&I:=\int_{(\bR^d)^{k+1}}\varphi(x_1,x_1',\ldots,x_{k+1},x_{k+1}')\prod_{i=1}^{k+1}
\gamma(x_i-x_i')dx_1\ldots dx_{k+1}dx_1' \ldots dx_{k+1}'=\\
& \int_{(\bR^d)^{2k}} \left( \int_{\bR^d} \psi_{\xi_{k+1}}(x_1,x_1',\ldots,x_k,x_k') \mu(d\xi_{k+1})\right) \prod_{i=1}^{k}\gamma(x_i-x_i')
dx_1\ldots dx_{k}dx_1' \ldots dx_{k}',
\end{align*}
where $\psi_{\xi_{k+1}}(x_1,x_1',\ldots,x_k,x_k'):=\cF \varphi (x_1,x_1',\ldots,x_k,x_k',\cdot,\cdot)(\xi_{k+1},-\xi_{k+1})$. We apply Fubini's theorem and the induction hypothesis for the inner integral on $(\bR^d)^{2k}$. We obtain:
\[
I=\int_{\bR^d} \left(\int_{(\bR^d)^{k}} \cF \psi_{\xi_{k+1}}(\xi_1,-\xi_1,\ldots,\xi_k,-\xi_k) \mu(d\xi_1)\ldots \mu(d\xi_k)\right) \mu(d\xi_{k+1}).
\]
The conclusion follows since  $\cF \psi_{\xi_{k+1}}(\xi_1,-\xi_1,\ldots,\xi_k,-\xi_k)=\cF \varphi(\xi_1,-\xi_1,\ldots,\xi_{k+1},-\xi_{k+1})$.
\end{proof}

\section{Products of Wiener integrals}

In this section, we develop a formula for the product of $n$ Wiener integrals with respect to the noise $W$, which plays a crucial role in the present paper. This formula may be known but we could not find a reference for it.

We recall the {\em product formula} from Malliavin calculus: for any integer $p\geq 1$, for any symmetric function $f \in \cH^{\otimes p}$ and for any $g \in \cH$,
\begin{equation}
\label{product}
I_p(f)I_1(g)=I_{p+1}(f \otimes g)+pI_{p-1}(f \otimes_1 g),
\end{equation}
where
$f \otimes_{1} g$ is the first contraction of $f$ and $g$, defined by:
\[
(f \otimes_{1} g)(y_1,\ldots,y_{p-1})=\langle f(\cdot,y_1,\ldots,y_{p-1}),g \rangle_{\cH}
\]
(see e.g. Theorem 2.7.10 of \cite{NP12}, or Proposition 1.1.3 of \cite{nualart06}).

\medskip

We will use the fact that $I_p(f)=I_p(\widetilde{f})$ for any $f \in \cH^{\otimes p}$, where $\widetilde{f}$ is the symmetrization of $f$.
We let $f \otimes_0 g=f \otimes g$ and we denote by $f \widetilde{\otimes}_{r} g$ the symmetrization of $f \otimes_{r} g$.

\medskip

\begin{theorem}
\label{product-n}
For any $n\geq 2$ and for any functions $f_1,\ldots,f_n\in \cH$,
\begin{equation}
\label{product-n-eq}
\prod_{j=1}^{n}I_1(f_j)=\sum_{k=0}^{\lfloor n/2 \rfloor} \sum_{\substack{J \subset [n]\\ |J|=n-2k}} \sum_{\substack{\{I_1,\ldots,I_k\} \ {\rm partition} \ {\rm of} \ J^c \\ I_i=\{\ell_i,m_i\} \forall i=1,\ldots,k}}I_{n-2k}(\bigotimes_{j\in J}f_j) \prod_{i=1}^{k}\langle f_{\ell_i},f_{m_i}\rangle_{\cH},
\end{equation}
with the convention that $\otimes_{j \in \emptyset} f_j=1$ and $I_0(1)=1$.
\end{theorem}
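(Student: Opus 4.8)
The plan is to prove \eqref{product-n-eq} by induction on $n \geq 2$, using the classical product formula \eqref{product} as the base case and the engine of the induction step. For the base case $n=2$, the formula \eqref{product} with $p=1$ gives $I_1(f_1)I_1(f_2) = I_2(f_1 \otimes f_2) + \langle f_1, f_2 \rangle_{\cH}$, which matches the right-hand side of \eqref{product-n-eq}: the $k=0$ term is $I_2(f_1 \otimes f_2)$ (with $J=\{1,2\}$), and the $k=1$ term is $\langle f_1, f_2 \rangle_{\cH}$ (with $J = \emptyset$, $I_1 = \{1,2\}$).

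For the induction step, assume \eqref{product-n-eq} holds for $n$, and multiply both sides by $I_1(f_{n+1})$. On the left we get $\prod_{j=1}^{n+1} I_1(f_j)$, which is what we want. On the right, each term is of the form $I_{n-2k}\big(\bigotimes_{j \in J} f_j\big) \cdot c_{I_1,\ldots,I_k} \cdot I_1(f_{n+1})$, where $c_{I_1,\ldots,I_k} = \prod_{i=1}^k \langle f_{\ell_i}, f_{m_i}\rangle_{\cH}$ is a scalar. First I would replace $\bigotimes_{j \in J} f_j$ by its symmetrization $\widetilde{\bigotimes_{j \in J} f_j}$ (legitimate since $I_m(g) = I_m(\widetilde g)$), then apply \eqref{product} with $p = n-2k$ and $g = f_{n+1}$:
\[
I_{n-2k}\Big(\bigotimes_{j \in J} f_j\Big) I_1(f_{n+1}) = I_{n-2k+1}\Big(\bigotimes_{j \in J} f_j \otimes f_{n+1}\Big) + (n-2k) I_{n-2k-1}\Big(\widetilde{\textstyle\bigotimes_{j \in J} f_j} \otimes_1 f_{n+1}\Big).
\]
The first term on the right is simply $I_{(n+1)-2k}\big(\bigotimes_{j \in J \cup \{n+1\}} f_j\big)$, a term of the desired type for $n+1$ with the \emph{same} value of $k$; here $n+1$ lies in the new index set $J' = J \cup \{n+1\}$, and the pairing structure $\{I_1,\ldots,I_k\}$ is unchanged. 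The combinatorial bookkeeping here is routine: as $J$ ranges over all $(n-2k)$-subsets of $[n]$ with all pairings of $J^c$, the pairs $(J', \text{pairings})$ that arise with $n+1 \in J'$ are exactly those needed.

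The main obstacle is the contraction term. Since $\widetilde{\bigotimes_{j \in J} f_j} \otimes_1 f_{n+1}$ is the contraction of a \emph{symmetrized} tensor product in one variable against $f_{n+1}$, expanding the symmetrization over the $(n-2k)!$ permutations of the coordinates indexed by $J$ and using symmetry of the multiple integral $I_{n-2k-1}$, one finds
\[
(n-2k)\, I_{n-2k-1}\Big(\widetilde{\textstyle\bigotimes_{j \in J} f_j} \otimes_1 f_{n+1}\Big) = \sum_{\ell \in J} \langle f_\ell, f_{n+1}\rangle_{\cH}\, I_{n-2k-1}\Big(\bigotimes_{j \in J \setminus \{\ell\}} f_j\Big).
\]
Each resulting term is of the desired type for $n+1$: now $J'' = J \setminus \{\ell\}$ has cardinality $n-2k-1 = (n+1) - 2(k+1)$, the new pairing collection is $\{I_1,\ldots,I_k, \{\ell, n+1\}\}$, and the scalar factor becomes $c_{I_1,\ldots,I_k}\langle f_\ell, f_{n+1}\rangle_{\cH}$, i.e.\ a product of $k+1$ inner products. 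So these terms account precisely for the $(n+1)$-level terms with pairing-number $k+1$ in which $n+1$ is paired with some $\ell \in [n]$. Finally I would check that summing the two contributions over the original index sets $J$ and pairings $\{I_1,\ldots,I_k\}$ for $n$ produces each $(n+1)$-level term $\big(J', \{I_1',\ldots,I_{k'}'\}\big)$ exactly once: if $n+1 \in J'$ it comes from the first term with $J = J' \setminus \{n+1\}$ and the same pairings; if $n+1 \in I_j'$ for some $j$, say $I_j' = \{\ell, n+1\}$, it comes from the contraction term with $J = J' \cup \{\ell\}$ and the pairings obtained by removing $I_j'$. This disjoint exhaustive matching closes the induction.
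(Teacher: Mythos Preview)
Your proof is correct and follows essentially the same approach as the paper: induction on $n$, applying the product formula \eqref{product} to $I_{n-2k}\big(\bigotimes_{j\in J}f_j\big)\,I_1(f_{n+1})$ after symmetrizing, expanding the contraction as $\sum_{\ell\in J}\langle f_\ell,f_{n+1}\rangle_{\cH}\,I_{n-2k-1}\big(\bigotimes_{j\in J\setminus\{\ell\}}f_j\big)$, and then matching terms according to whether $n+1$ lies in the new index set $J'$ or in one of the pairs. The paper organizes the bookkeeping by splitting into labeled sums $A_1,A_2,A_3$ and $B_1,B_2$ (and treats the case $J=\emptyset$ separately), whereas you describe the bijection directly, but the argument is the same.
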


\begin{proof}
We use induction on $n$. The case $n=2$ is clear.

Suppose that the result is true for $n$. Assume that $n$ is even, $n=2K$ for $K\in \bZ_{+}$. The case when $n$ is odd is similar and will be omitted.

We multiply relation \eqref{product-n-eq} by $I_{n+1}(f_{n+1})$. To evaluate the product $I_{n-2k}(\bigotimes_{j\in J}f_j)I_1(f_{n+1})$, we dstinguish 2 cases: \\
(a) if $k=K$, then $J=\emptyset$ and $I_{n-2k}(\bigotimes_{j\in J}f_j)I_1(f_{n+1})=I_1(f_{n+1})$;\\
(b) if $k \leq K-1$, then by \eqref{product},
\[
I_{n-2k}(\bigotimes_{j\in J}f_j)I_{1}(f_{n+1})=I_{n+1-2k}\Big(\big(\widetilde{\bigotimes}_{j\in J} f_j \big)\otimes f_{n+1} \Big)+(n-2k)I_{n-2k-1}\Big(\big(\widetilde{\bigotimes}_{j\in J} f_j \big)\otimes_1 f_{n+1} \Big).
\]
Note that $I_{n+1-2k}\Big(\big(\widetilde{\bigotimes}_{j\in J} f_j \big)\otimes f_{n+1} \Big)=I_{n+1-2k}\Big(\bigotimes_{j\in J} f_j \otimes f_{n+1} \Big)$ since
$\big(\widetilde{\bigotimes}_{j\in J} f_j \big)\otimes f_{n+1}$ and $\bigotimes_{j\in J} f_j \otimes f_{n+1}$ have the same symmetrizations.
Moreover, $\big(\widetilde{\bigotimes}_{j \in J} f_j \big) \otimes_1 f_{n+1}=\frac{1}{n-2k} \sum_{j\in J} \langle f_{j},f_{n+1} \rangle_{\cH} \widetilde{\bigotimes}_{\ell \in J \verb2\2 \{j\}} f_{\ell} $ and hence,
\[
I_{n-2k-1}\Big( \big(\widetilde{\bigotimes}_{j\in J}f_j \big) \otimes_1 f_{n+1}\Big)=\frac{1}{n-2k}\sum_{j \in J} \langle f_{j},f_{n+1} \rangle_{\cH} I_{n-2k-1}\Big( \bigotimes_{\ell \in J - \{j\}} f_{\ell} \Big).
\]
It follows that
\begin{align*}
A&:=\prod_{i=1}^{n+1}I_1(f_i)
=\sum_{k=0}^{K-1} \sum_{\substack{J \subset[n]\\ |J|=n-2k}} \sum_{\substack{\{I_1,\ldots,I_k\} \ {\rm partition} \ {\rm of} \ [n]-J\\ I_i=\{\ell_i,m_i\} \forall i=1,\ldots,k}}I_{n+1-2k}\Big(\bigotimes_{j\in J} f_j \otimes f_{n+1} \Big) \prod_{i=1}^{k}\langle f_{\ell_i},f_{m_i}\rangle_{\cH}\\
&+\sum_{k=0}^{K-1} \sum_{\substack{J \subset[n]\\ |J|=n-2k}} \sum_{j \in J} \sum_{\substack{\{I_1,\ldots,I_k\} \ {\rm partition} \ {\rm of} \ [n]-J\\ I_i=\{\ell_i,m_i\} \forall i=1,\ldots,k}}  I_{n-2k-1}\Big( \bigotimes_{\ell \in J - \{j\}} f_{j} \Big)
 \prod_{i=1}^{k}\langle f_{\ell_i},f_{m_i}\rangle_{\cH} \langle f_{j},f_{n+1} \rangle_{\cH} \\
& +\sum_{\substack{(\{I_1,\ldots,I_k\} \ {\rm partition} \ {\rm of} \ [n]-J\\ I_i=\{\ell_i,m_i\} \forall i=1,\ldots,k}} I_1(f_{n+1}) \prod_{i=1}^{K}\langle f_{\ell_i},f_{m_i}\rangle_{\cH}=:A_1+A_2+A_3.
\end{align*}

We have to prove that $A=B$, where
\begin{align*}
B=\sum_{k=0}^{K} \sum_{\substack{J' \subset [n+1]\\ |J'|=n+1-2k}} \sum_{\substack{\{I_1,\ldots,I_k\} \ {\rm partition} \ {\rm of} \ [n+1]-J' \\ I_i=\{\ell_i,m_i\} \forall i=1,\ldots,k}}I_{n+1-2k}(\bigotimes_{j\in J'}f_j) \prod_{i=1}^{k}\langle f_{\ell_i},f_{m_i}\rangle_{\cH}.
\end{align*}

We split $B$ using $k \leq K-1$ and $k=K$: (if $k=K$ then $|J'|=n+1-2K=1$)
\begin{align*}
B&=\sum_{k=0}^{K-1} \sum_{\substack{J' \subset [n+1]\\ |J'|=n+1-2k}} \sum_{\substack{\{I_1,\ldots,I_k\} \ {\rm partition} \ {\rm of} \ [n+1]-J' \\ I_i=\{\ell_i,m_i\} \forall i=1,\ldots,k}}I_{n+1-2k}(\bigotimes_{j\in J'}f_j) \prod_{i=1}^{k}\langle f_{\ell_i},f_{m_i}\rangle_{\cH}\\
&+\sum_{j=1}^{n} \sum_{\substack{\{I_1,\ldots,I_K\} \ {\rm partition} \ {\rm of} \ [n+1]-J' \\ I_i=\{\ell_i,m_i\} \forall i=1,\ldots,K}}I_1(f_j) \prod_{i=1}^{K}\langle f_{\ell_i},f_{m_i}\rangle_{\cH}=:B_1+B_2.
\end{align*}

We split $A_2$ using $k\leq K-2$ and $k=K-1$: (if $k=K-1$ then $n-2k-1=1$)
\begin{align*}
A_2&=\sum_{k=0}^{K-2} \sum_{\substack{J \subset[n]\\ |J|=n-2k}} \sum_{j \in J} \sum_{\substack{\{I_1,\ldots,I_k\} \ {\rm partition} \ {\rm of} \ [n]-J\\ I_i=\{\ell_i,m_i\} \forall i=1,\ldots,k}}  I_{n-2k-1}\Big( \bigotimes_{\ell \in J - \{j\}} f_{j} \Big)
 \prod_{i=1}^{k}\langle f_{\ell_i},f_{m_i}\rangle_{\cH} \langle f_{j},f_{n+1} \rangle_{\cH}\\
 &+\sum_{\substack{J \subset [n] \\ |J|=2}} \sum_{\substack{\{I_1,\ldots,I_{K-1}\} \ {\rm partition} \ {\rm of} \ [n]-J\\ I_i=\{\ell_i,m_i\} \forall i=1,\ldots,K-1}}\sum_{j\in J} I_1\big(\bigotimes_{\ell \in J-\{j\}}f_{\ell}\big)\langle f_j,f_{n+1} \rangle_{\cH} \prod_{i=1}^{K-1} \langle f_{\ell_i}, f_{m_1}\rangle_{\cH}:=A_2'+A_2''.
\end{align*}
We notice that:
\[
(a) \ A_1+A_2'=B_1 \quad \mbox{and} \quad (b) \ A_2''+A_3=B_2.
\]

(a) We use the decomposition $B_1=B_1'+B_1''$, where $B_1'$ and $B_1''$ correspond to the cases $n+1  \in J'$, respectively $n+1 \not \in J'$. We observe that $A_1=B_1''$ and $A_2'=B_1'$.

(b) We use the decomposition $B_2=B_2'+B_2''$, where $B_2'$ and $B_2''$ correspond to the cases $j=n+1$, respectively $j\leq n$. We observe that $A_3=B_2'$ and $A_2''=B_2''$.

\end{proof}

\section{Semigroup-type property of $G$}

In this section, we prove a property of $G$ which was used in Step 3.(a) of the proof of Theorem \ref{th-conv-e}. We denote $G_t(x)=G(t,x)$.

\begin{lemma}[Lemma 4.3 of \cite{BNZ20}]
\label{BNZ-lem4-3}
Suppose that $d=2$. Let $q \in (\frac{1}{2},1)$ be arbitrary. For any $0<r<t$ and $x,z \in \bR^d$
\[
\int_r^t \big(G_{t-s}^{2q}*G_{s-r}^{2q}\big)^{1/q}(x-z) ds \leq A_q (t-r)^{\frac{1}{q}-1}G_{t-r}^{2-\frac{1}{q}}(x-z),
\]
where $A_q>0$ is a constant depending on $q$.
\end{lemma}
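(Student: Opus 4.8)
The plan is to normalise the inequality by translation and scaling, reduce it to a single two–dimensional integral over a lens-shaped region, and estimate that integral by a change of variables adapted to the two circles bounding the lens.

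\emph{Reductions.} By translation invariance the left-hand side depends only on $t-r$, $s-r$ and $w:=x-z$, so we may take $r=0$. Using $G_a(y)=a^{-1}G_1(y/a)$, a direct computation gives $(G_{\lambda a}^{2q}*G_{\lambda b}^{2q})(\lambda w)=\lambda^{2-4q}(G_a^{2q}*G_b^{2q})(w)$; substituting $s=t\sigma$ one checks that both sides of the asserted inequality carry the common factor $t^{2/q-3}$. Hence it suffices to treat $t=1$, i.e., using $G_1^{2-1/q}(v)\asymp(1-|v|)^{\frac{1}{2q}-1}1_{\{|v|<1\}}$, to prove
\[
\int_0^1\big(G_{1-\sigma}^{2q}*G_\sigma^{2q}\big)^{1/q}(v)\,d\sigma\ \le\ A_q\,(1-|v|)^{\frac{1}{2q}-1}\qquad (|v|<1).
\]
Since $a^2-|y|^2=(a-|y|)(a+|y|)$ with $a\le a+|y|<2a$ on $\{|y|<a\}$, we have $G_a^{2q}(y)\asymp a^{-q}(a-|y|)^{-q}1_{\{|y|<a\}}$, so
\[
G_{1-\sigma}^{2q}*G_\sigma^{2q}(v)\asymp\big(\sigma(1-\sigma)\big)^{-q}\mathcal{K}_\sigma(v),\qquad
\mathcal{K}_\sigma(v):=\int_{D_\sigma(v)}(1-\sigma-|v-y|)^{-q}(\sigma-|y|)^{-q}\,dy,
\]
where $D_\sigma(v):=B(0,\sigma)\cap B(v,1-\sigma)$ (the integrand vanishes off $D_\sigma(v)$); the goal becomes $\int_0^1(\sigma(1-\sigma))^{-1}\mathcal{K}_\sigma(v)^{1/q}\,d\sigma\lesssim(1-|v|)^{\frac{1}{2q}-1}$.

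\emph{The lens integral.} On $D_\sigma(v)$ set $a=\sigma-|y|\ge0$ and $b=1-\sigma-|v-y|\ge0$; the triangle inequality $|y|+|v-y|\ge|v|$ forces $a+b\le 1-|v|=:\delta$. The map $y\mapsto(a,b)$ is (generically) two-to-one onto a subset of $\{a,b>0,\ a+b<\delta\}$ with Jacobian $|\sin\psi|^{-1}$, where $\psi$ is the angle between $y$ and $v-y$; writing $|\sin\psi|=2\,S(y)/(|y|\,|v-y|)$ with $S(y)$ the area of the triangle $0,v,y$, Heron's formula in the sides $|v|$, $\sigma-a$, $1-\sigma-b$ gives
\[
16\,S(y)^2=(2-\delta-a-b)(\delta-a-b)\big(|v|^2-(1-2\sigma+a-b)^2\big).
\]
Thus $\mathcal{K}_\sigma(v)$ becomes an explicit integral over (a subset of) $\{a,b>0,\ a+b<\delta\}$; bounding the domain by the full simplex, rescaling $a=\delta\alpha$, $b=\delta\beta$ to turn the $(a,b)$-integral into Dirichlet/Beta integrals, and then integrating in $\sigma$, one checks that every singular factor that occurs --- $(\delta-a-b)^{-1/2}$, the ``circle'' singularities $a^{-q}$, $b^{-q}$, and the factor $(|v|^2-(1-2\sigma)^2)^{-1/2}$ near the two internal-tangency times $\sigma_\pm=\tfrac{1}{2}(1\mp|v|)$ --- has exponent $>-1$, so every integral converges; collecting the powers of $\delta=1-|v|$ then produces exactly the bound $A_q(1-|v|)^{\frac{1}{2q}-1}$. (The degenerate regimes $v\to 0$, where the triangle collapses, and $|v|\to 1$ are absorbed in the same way.)

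\emph{Main obstacle.} The entire difficulty is concentrated in this last step. The factor $(|v|^2-(1-2\sigma+a-b)^2)^{-1/2}$ blows up as $\sigma\to\sigma_\pm$ --- exactly where $B(0,\sigma)$ and $B(v,1-\sigma)$ become internally tangent and where $\mathcal{K}_\sigma(v)$ is in fact infinite once $q\ge\tfrac{3}{4}$ --- so one cannot estimate the $\sigma$-integrand pointwise: the $\sigma$-integral must be handled as a whole, and one must verify that the resulting singularity in $\sigma$ has exponent $>-1$, which is where the hypothesis $q<1$ enters. Simultaneously the exponents of $\delta$ and of $\sigma(1-\sigma)$ must be tracked through several Beta integrals, whose convergence uses $q<1$ (so that the singularities $(a-|y|)^{-q}$, $(1-\sigma-|v-y|)^{-q}$ are integrable) together with $2-\tfrac{1}{q}>0$, i.e.\ $q>\tfrac{1}{2}$; a naive use of Young's or H\"older's inequality is of no help, since for $q>\tfrac{1}{2}$ it would force replacing $D_\sigma(v)$ by a full ball, destroying the dependence on $|v|$ that the right-hand side of the lemma requires.
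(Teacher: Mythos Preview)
The paper does not prove this lemma at all: it is stated as Lemma~4.3 of \cite{BNZ20} and quoted without argument, so there is no ``paper's own proof'' to compare against. What can be assessed is whether your sketch stands on its own.

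Your reductions are sound: the scaling $G_a(y)=a^{-1}G_1(y/a)$ and the translation invariance do reduce the problem to $r=0$, $t=1$, and the estimate $G_a^{2q}(y)\asymp a^{-q}(a-|y|)^{-q}1_{\{|y|<a\}}$ is correct. The lens region $D_\sigma(v)$ and the $(a,b)$ change of variables with Jacobian expressed through Heron's formula are also a natural way to proceed.

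The gap is exactly where you yourself locate it. After setting up the integral you write ``one checks that every singular factor that occurs \ldots\ has exponent $>-1$, so every integral converges; collecting the powers of $\delta$ then produces exactly the bound,'' and then in the \emph{Main obstacle} paragraph you explain in detail why this check is delicate --- in particular that $\mathcal{K}_\sigma(v)$ is infinite for $q\ge 3/4$ near the tangency times $\sigma_\pm$, so no pointwise bound on the $\sigma$-integrand can work and the $\sigma$-integration must be performed jointly with the $(a,b)$-integration. But you never actually perform it: you describe the obstacles, name the competing singularities, and assert that the exponents come out right, without exhibiting the iterated Beta evaluations or the order in which the $\sigma$-integration must be interleaved with them. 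That is precisely the content of the lemma. As written, this is a plausible plan of attack together with an honest diagnosis of its hardest point, not a proof; to complete it you would need to carry out the joint $(\sigma,a,b)$ integration explicitly (or else consult the original argument in \cite{BNZ20}).
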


As an immediate consequence of the previous lemma, we obtain the following semigroup-type property of $G$.

\begin{lemma}
\label{semigroup}
For any $0<r<t$ and $x,z\in \bR^d$,
\[
\int_r^t \int_{\bR^d} G_{t-s}(x-y)G_{s-r}(y-z)dyds \leq C_t^{(1)} G_{t-r}(x-z),
\]
where $C_t^{(1)} =Ct^2$ and $C$ is a positive constant.
\end{lemma}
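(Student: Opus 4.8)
The plan is to derive Lemma~\ref{semigroup} from Lemma~\ref{BNZ-lem4-3} by a simple application of H\"older's inequality, treating the cases $d=1$ and $d=2$ separately (the case $d=2$ being the one that genuinely requires the previous lemma).

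First I would dispose of $d=1$. Here $G_t(x)=\tfrac12 1_{\{|x|<t\}}$, so $G_{t-s}*G_{s-r}$ is an explicit convolution of two indicator functions; it is supported on $\{|x-z|<t-r\}$ and bounded above by $\tfrac14\bigl((t-s)\wedge(s-r)\bigr)\le \tfrac14(t-r)$. Integrating in $s$ over $(r,t)$ gives a bound $\tfrac14(t-r)^2 1_{\{|x-z|<t-r\}}=\tfrac12(t-r)^2 G_{t-r}(x-z)$, which is of the desired form $C_t^{(1)}G_{t-r}(x-z)$ with $C_t^{(1)}=Ct^2$ since $t-r\le t$.

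For $d=2$, fix $q\in(\tfrac12,1)$ and write, for each $s\in(r,t)$,
\[
\bigl(G_{t-s}*G_{s-r}\bigr)(x-z)=\int_{\bR^2} G_{t-s}(x-y)^{1-q}\,G_{t-s}(x-y)^{q}\,G_{s-r}(y-z)^{q}\,G_{s-r}(y-z)^{1-q}\,dy.
\]
Apply H\"older with exponents chosen so that the $(1-q)$-powers pair up against the $L^1$ masses of $G_{t-s}$ and $G_{s-r}$ (recall $\int_{\bR^2}G_\sigma=\sigma$) and the $q$-powers pair up in the $\bigl(G_{t-s}^{2q}*G_{s-r}^{2q}\bigr)^{1/(2q)}$ factor. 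Concretely one gets
\[
\bigl(G_{t-s}*G_{s-r}\bigr)(x-z)\le \bigl((t-s)(s-r)\bigr)^{1-q}\Bigl(\bigl(G_{t-s}^{2q}*G_{s-r}^{2q}\bigr)(x-z)\Bigr)^{1/(2q)}\!\!,
\]
using $\|G_\sigma^{1-q}\|$-type estimates; bounding $(t-s)(s-r)\le(t-r)^2$ and then integrating in $s$, a further H\"older (or Jensen) step in the $s$-variable lets me insert Lemma~\ref{BNZ-lem4-3}, which yields $\int_r^t\bigl(G_{t-s}^{2q}*G_{s-r}^{2q}\bigr)^{1/q}(x-z)\,ds\le A_q(t-r)^{1/q-1}G_{t-r}^{2-1/q}(x-z)$. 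Collecting powers of $(t-r)$ and noting $G_{t-r}^{2-1/q}\cdot(\text{polynomial in }t-r)$ collapses to $C(t-r)^2 G_{t-r}(x-z)$ after recognizing that $G_{t-r}(x-z)^{2-1/q}$ times an appropriate power of $(t-r)$ equals $G_{t-r}(x-z)$ times a power of $(t-r)$ (because $G_{t-r}(w)=(t-r)^{-1}G_1\bigl(w/(t-r)\bigr)$ up to the indicator, so powers of $G_{t-r}$ differ only by powers of $(t-r)$ on its support), one arrives at the claimed bound with $C_t^{(1)}=Ct^2$.

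The main obstacle is purely bookkeeping: choosing the H\"older exponents in the spatial convolution and in the time integral so that all the fractional powers of $G$ and all the scaling factors $(t-s),(s-r),(t-r)$ combine to give exactly $G_{t-r}(x-z)$ times $(t-r)^2$, and checking that the exponent $2-1/q>0$ (which holds since $q>1/2$) makes the scaling identity $G_\sigma^{\beta}=\sigma^{-(\beta-1)}\cdot\sigma\,G_\sigma^{\beta}/\sigma$ usable. No deep estimate beyond Lemma~\ref{BNZ-lem4-3} and the explicit form of $G$ is needed; one just has to be careful that the constant depends on $t$ only through the stated factor $t^2$ (which is immediate since every occurrence of $t-r$ or its subintervals is $\le t$).
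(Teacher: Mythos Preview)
Your plan is the paper's plan: treat $d=1$ by the explicit indicator computation, and for $d=2$ reduce to Lemma~\ref{BNZ-lem4-3} via a H\"older step in the time variable plus a scaling identity relating powers of $G_{t-r}$. The only difference is in how you pass from $G_{t-s}*G_{s-r}$ to $G_{t-s}^{2q}*G_{s-r}^{2q}$. The paper does \emph{not} use a spatial H\"older here; it simply uses the pointwise bound $G_\sigma(w)\le (2\pi\sigma)^{2q-1}G_\sigma(w)^{2q}$ (valid since $G_\sigma^{-1}=2\pi\sqrt{\sigma^2-|w|^2}\le 2\pi\sigma$ and $2q-1>0$), which immediately gives $G_{t-s}*G_{s-r}\le (2\pi t)^{2(2q-1)}\,G_{t-s}^{2q}*G_{s-r}^{2q}$. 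After H\"older in $s$ and Lemma~\ref{BNZ-lem4-3}, the reverse pointwise bound $G_{t-r}^{2q-1}\le (2\pi t)^{2-2q}G_{t-r}$ converts back and all the $t$-powers collapse to $t^2$.

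Your spatial H\"older route can be made to work, but the displayed inequality is not quite right as written: with three factors and exponents $p_1=p_3=1/(1-q)$ (so that $\|G_\sigma^{1-q}\|_{p_1}=\sigma^{1-q}$) the remaining exponent is $p_2=1/(2q-1)$, which forces $\bigl(\int G_{t-s}^{q/(2q-1)}G_{s-r}^{q/(2q-1)}\bigr)^{2q-1}$ on the right, not $(G_{t-s}^{2q}*G_{s-r}^{2q})^{1/(2q)}$; these agree only for $q=3/4$, and even then the outer exponent is $1/2$, not $1/(2q)$. Since you explicitly flag the exponent bookkeeping as the obstacle, I would recommend abandoning the spatial H\"older entirely and using the paper's two pointwise bounds instead---it turns the bookkeeping into two lines.
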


\begin{proof} If $d=1$, this follows by relation (2.6) of \cite{nualart-zheng21}.

Suppose that $d=2$. Let $q \in (\frac{1}{2},1)$ be arbitrary. Note that $G_t(x) \leq C_{t,q} G_t^{2q}(x)$, where $C_{t,q}=(2\pi t)^{2q-1}$. By H\"older's inequality and Lemma \ref{BNZ-lem4-3},
\begin{align*}
& \int_r^t\int_{\bR^d}G_{t-s}(x-y)G_{s-r}(y-z)dydr \leq C_{t,q}^2 \int_r^t \big( G_t^{2q}*G_{s}^{2q}\big) (x-z)dr \leq \\
&  \qquad C_{t,q}^2 (t-r)^{1-q} \left[\int_r^t \big( G_t^{2q}*G_{s}^{2q}\big)^{1/q}(x-z) ds\right]^{q} \leq C_{t,q}^2 (t-r)^{2(1-q)} A_q^q G_{t-r}^{2q-1}(x-z).
\end{align*}
The conclusion follows using the fact that $G_{t-r}^{2q-1}(x-z) \leq (2\pi t)^{2-2q}G_{t-r}(x-z)$.
\end{proof}

\noindent \footnotesize{\em Acknowledgement.} The author is grateful to Guangqu Zheng for sharing the proof of Theorem \ref{product-n} in the case $n=3$, and to Samy Tindel for pointing out references \cite{hu-meyer1} and \cite{hu-meyer2}.

\normalsize

\end{document}